\numberwithin{equation}{section}
\theoremstyle{plain}
\newtheorem{theorem}{Theorem}[section]
\newtheorem{proposition}[theorem]{Proposition}
\newtheorem{lemma}[theorem]{Lemma}
\newtheorem{corollary}[theorem]{Corollary}
\theoremstyle{definition}
\newtheorem{definition}[theorem]{Definition}
\newtheorem{example}[theorem]{Example}
\theoremstyle{remark}
\newtheorem{remark}[theorem]{Remark}
\def\to{\rightarrow}
\def\dim{\operatorname{dim}}
\def\Gr{\operatorname{Gr}}
\def\SL{\operatorname{SL}}
\def\GL{\operatorname{GL}}
\def\SO{\operatorname{SO}}
\def\Spin{\operatorname{Spin}}
\def\Sp{\operatorname{Sp}}
\def\Sing{\operatorname{Sing}}
\begin{document}

\title[Equivariant Ulrich bundles on exceptional homogeneous varieties]{Equivariant Ulrich bundles on exceptional homogeneous varieties}

\author{Kyoung-Seog Lee and Kyeong-Dong Park}

\address{Kyoung-Seog Lee \\ Center for Geometry and Physics \\ Institute for Basic Science (IBS) \\ Pohang 37673, Korea
\vskip 0.2em 
Current Address: 
Institute of the Mathematical Sciences of the Americas, University of Miami, 1365 Memorial Drive, Ungar 515, Coral Gables, FL 33146, USA}
\email{kyoungseog02@gmail.com}

\address{Kyeong-Dong Park \\ Center for Geometry and Physics \\ Institute for Basic Science (IBS) \\ Pohang 37673, Korea
\vskip 0.2em 
Current Address: 
School of Mathematics, Korea Institute for Advanced Study (KIAS), Dongdaemun-gu, Seoul 02455, Republic of Korea}
\email{kdpark@kias.re.kr}

\thanks{The authors were supported by the Institute for Basic Science (IBS-R003-Y1).}

\subjclass[2010]{Primary 14J60, 14M15, 14F05, 32L10}

\keywords{equivariant Ulrich bundles, exceptional homogeneous varieties, Borel--Weil--Bott theorem, Cayley plane}

\begin{abstract} 
We prove that the only rational homogeneous varieties with Picard number 1 of the exceptional algebraic groups admitting irreducible equivariant Ulrich vector bundles 
are the Cayley plane $E_6/P_1$ and the $E_7$-adjoint variety $E_7/P_1$.
From this result, we see that a general hyperplane section $F_4/P_4$ of the Cayley plane also has an equivariant but non-irreducible Ulrich bundle.
\end{abstract}

\maketitle

\section{Introduction}

Vector bundles on algebraic varieties are fundamental objects to study in order to understand geometry and topology of the varieties. 
Topological type of a vector bundle on an algebraic curve is determined by rank and degree of the vector bundle, and there are lots of studies about 
vector bundles of given rank and degree on curves. 
For higher-dimensional algebraic varieties, 
we need more data to determine topological types of vector bundles on them 
and it makes hard to decide which vector bundles are most interesting objects to study on higher-dimensional algebraic varieties.

Ulrich bundles are vector bundles which enjoy many special features, 
and existence and properties of Ulrich bundles on a given algebraic variety tell us many properties of the variety. 
Therefore Ulrich bundles form natural candidates of vector bundles on higher-dimensional algebraic varieties to investigate. 
Eisenbud and Schreyer asked whether every projective variety admits an Ulrich sheaf in \cite{ES} 
and their question has been answered positively 
for several algebraic varieties, 
e.g., complete intersection varieties \cite{HUB}, del Pezzo surfaces \cite{ES}, K3 surfaces \cite{AFO}, abelian surfaces \cite{Be16}, and non-special surfaces with $p_g= q= 0$ \cite{Cas}. 
Recently moduli spaces of Ulrich bundles were studied for some algebraic varieties, 
and it seems that they enjoy many interesting properties and reflect many geometric features of the given varieties (cf. \cite{LMS1, LMS2, CKL, LP}).

Equivariant Ulrich bundles on some rational homogeneous varieties were studied by many authors. 
To be more precise, Costa and Mir\'{o}-Roig \cite{CMR1}, and Coskun et al. \cite{CCHMW} 
classified irreducible equivariant Ulrich bundles on Grassmannians, and partial flag varieties of algebraic groups of type $A$, respectively. 
Then Fonarev \cite{Fo} classified irreducible equivariant Ulrich bundles on isotropic Grassmannians of algebraic groups associated to other classical groups of type $B, C, D$. 
Being motivated by the above works, we classify irreducible equivariant Ulrich bundles on 
rational homogeneous varieties with Picard number 1 of the exceptional algebraic groups in this paper. 

Let us recall some basic definitions and properties of algebraic groups to state the result more precisely.
Let $G$ be a simple algebraic group 
over $\mathbb C$.
We fix a maximal torus $T \subset G$. 
Let $\Delta=\{ \alpha_1, \cdots, \alpha_n\}$ be the system of simple roots of $G$  
and $P=P_k$ denote the maximal parabolic subgroup associated to a simple root $\alpha_k$
following the standard numbering (e.g., 
\cite{Humphreys}). 
We know that the category of $G$-equivariant vector bundles on $G/P$ is equivalent to 
the category of finite-dimensional representations of $P$. Since $P$ has a Levi decomposition $P=LU$, 
where $U$ is the unipotent radical of $P$ and $L$ is a Levi factor, and $U$ acts trivially, 
irreducible representations of $P$ are completely determined by representations of the reductive group $L$. 

The weight lattice of $L$ is canonically isomorphic to the weight lattice of $G$. 
Let $\Lambda^+_L$ denote the cone of integral $L$-dominant weights, 
which is generated by the fundamental weights $\omega_1, \cdots, \omega_n$ and $-\omega_k$. 
Given $\omega \in \Lambda_L^+$, we have an irreducible representation $V(\omega)$ of $P$ with 
highest weight $\omega$.
We denote by $\mathcal E_{\omega}$ the corresponding irreducible equivariant vector bundle $G\times_P V(\omega)^*$ on $G/P$, 
where $V(\omega)^*$ is the dual representation of $V(\omega)$.

In this paper, we classify irreducible equivariant Ulrich bundles on rational homogeneous varieties with Picard number 1 of the exceptional algebraic groups. 
In particular, we prove that three more homogeneous varieties admit equivariant Ulrich bundles on them.

\begin{theorem} 
The only rational homogeneous varieties with Picard number 1 of the exceptional algebraic groups admitting an irreducible equivariant Ulrich bundle 
are the Cayley plane $E_6/P_1 \cong E_6/P_6$ and the $E_7$-adjoint variety $E_7/P_1$.
Furthermore, the irreducible equivariant bundles $\mathcal E_{\omega_5+3\omega_6}$ on $E_6/P_1$ and $\mathcal E_{\omega_5+3\omega_6+8\omega_7}$ on $E_7/P_1$ are Ulrich, 
and they are the only irreducible equivariant Ulrich bundle up to isomorphism, respectively. 
\end{theorem}

It is well-known that the homogeneous variety $F_4/P_4$ is isomorphic to a general hyperplane section of the Cayley plane $E_6/P_1$. 
Hence we obtain the following corollary.

\begin{corollary}
Let $\mathcal{E}_{\omega_5+3\omega_6}$ be the irreducible equivariant Ulrich bundle on the Cayley plane $E_6/P_1$ in the above theorem. 
Then the restriction $\mathcal{E}_{\omega_5+3\omega_6}|_{F_4/P_4}$ is an equivariant but non-irreducible Ulrich bundle on the rational homogeneous variety $F_4/P_4$.
\end{corollary}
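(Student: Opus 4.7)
My plan is to combine two ingredients: the behavior of Ulrich bundles under hyperplane restriction, and the main theorem interpreted negatively for $F_4/P_4$.

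First I would invoke the standard fact that if $\mathcal F$ is an Ulrich bundle on a smooth projective variety $X \subset \mathbb P^N$ and $H \subset X$ is a smooth hyperplane section, then $\mathcal F|_H$ is Ulrich on $H$ with respect to $\mathcal O_X(1)|_H$. This follows from the cohomological characterization ($H^i(X, \mathcal F(-j)) = 0$ for $1 \le j \le \dim X$ and all $i$) together with the short exact sequence
\[
0 \to \mathcal F(-j-1) \to \mathcal F(-j) \to \mathcal F|_H(-j) \to 0,
\]
whose long exact sequence in cohomology transfers the required vanishings from $X$ to $H$ in the range $1 \le j \le \dim H$. Applied with $X = E_6/P_1$, $\mathcal F = \mathcal E_{\omega_5+3\omega_6}$, and $H = F_4/P_4$ realized as the usual hyperplane section of the Cayley plane, this shows the restriction is Ulrich --- one also uses that $\mathcal O_{E_6/P_1}(1)|_{F_4/P_4}$ is the ample generator of $\mathrm{Pic}(F_4/P_4)$, a standard feature of this embedding.

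Next I would observe equivariance: the inclusion $F_4 \subset E_6$ realizes $F_4/P_4$ as an $F_4$-stable subvariety of $E_6/P_1$, so the restriction of any $E_6$-equivariant bundle inherits a canonical $F_4$-equivariant structure. In particular $\mathcal E_{\omega_5+3\omega_6}|_{F_4/P_4}$ is $F_4$-equivariant. Non-irreducibility is then immediate from the main theorem applied in the negative direction: $F_4/P_4$ is a Picard-rank-one rational homogeneous variety of an exceptional group, and it is not among the two varieties $E_6/P_1$ and $E_7/P_1$ listed as admitting an irreducible equivariant Ulrich bundle. Hence no irreducible equivariant Ulrich bundle exists on $F_4/P_4$, and our restriction --- being equivariant, Ulrich, and of positive rank --- must decompose nontrivially as a $P_4$-representation.

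Nothing in this argument is really hard, but the closest thing to an obstacle is the classical geometric input: confirming that the specific $F_4$-invariant hyperplane cutting out $F_4/P_4 \subset E_6/P_1 \subset \mathbb P^{26}$ is smooth and that the restricted polarization coincides with the ample generator of $\mathrm{Pic}(F_4/P_4)$. Both are well-documented facts from the theory of the Cayley plane and the embedding $F_4 \hookrightarrow E_6$, so I expect no serious difficulty.
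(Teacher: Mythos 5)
Your proposal is correct and follows essentially the same route as the paper: restrict the Ulrich bundle $\mathcal E_{\omega_5+3\omega_6}$ along the hyperplane section $F_4/P_4 \subset E_6/P_1$ using the standard fact (from \cite{ES} and \cite{Be17}) that Ulrich bundles restrict to Ulrich bundles on hyperplane sections, and deduce non-irreducibility from the proposition that $F_4/P_4$ admits no irreducible equivariant Ulrich bundle. Your sketch of the long-exact-sequence argument for the restriction step is a correct expansion of what the paper simply cites.
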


It turns out that most of exceptional homogeneous varieties do not admit an irreducible equivariant Ulrich bundle according to the negative result obtained in this paper. 
However, from the above corollary,
we see that there can be equivariant but non-irreducible Ulrich bundles on the other homogeneous varieties of exceptional groups. 
For example, $G_2/P_1$ is isomorphic to the 5-dimensional quadric so it also has an equivariant but non-irreducible Ulrich bundle. 
As also pointed out in Remark 6.7 of \cite{Fo}, it will be an interesting task to find non-irreducible equivariant Ulrich bundles on homogeneous varieties.

\section{
Ulrich bundles and the Borel--Weil--Bott theorem}

There are several equivalent definitions of Ulrich bundles using linear resolutions or cohomologies.
We are going to use the cohomological characterization.

\begin{definition}
Let $X\subset \mathbb P^N$ be a smooth projective variety of dimension $d$ over $\mathbb C$. 
A vector bundle $E$ on $X$ is called \emph{Ulrich} if the cohomology groups $H^i(X, E(-t))=0$ for all $0 \leq i \leq d$ and $1 \leq t \leq d$. 
Here, we denote the twisted bundle $E \otimes \mathcal O_X(-t)$ by $E(-t)$.
\end{definition}

More geometrically, an Ulrich bundle can be characterized as a \emph{trivial} one in the sense: 
the push-forward $\pi_* E$ is a trivial bundle for a general linear projection $\pi \colon X \to \mathbb P^d$; 
see \cite{ES} and Theorem 2.3 of \cite{Be17} for the proof. 

\begin{definition}
A vector bundle $E$ on a rational homogeneous varietiy $G/P$ is \emph{equivariant} 
if the action of $G$ on the base space $G/P$ lifts to a compatible action of $G$ on $E$ via bundle automorphisms.
\end{definition}

For an integral weight $\omega$ dominant with respect to $P$, 
we have an irreducible representation $V(\omega)$ of $P$ with 
highest weight $\omega$, and 
denote by $\mathcal E_{\omega}$ the corresponding \emph{irreducible equivariant vector bundle} $G\times_P V(\omega)^*$ on $G/P$:
$$\mathcal E_{\omega}:= G \times_P V(\omega)^* = (G \times V(\omega)^*)/P,$$
where the equivalence relation is given by $(g, v) \sim (gp, p^{-1} . v)$ for $p \in P$.
To compute the cohomology of equivariant vector bundles on a rational homogeneous variety $G/P$, 
we use the famous Borel--Weil--Bott theorem from \cite{Bo57}. 
For an introduction to this theorem, we refer \cite{Bo88, Snow, W} and Chapter 5 of \cite{BE}.

\begin{theorem}[Borel--Weil--Bott theorem] 
Let $G$ be a simply connected complex semisimple algebraic group and $P \subset G$ a parabolic subgroup.
Suppose that $\omega$ is an integral weight for $G$ dominant with respect to $P$. Let $\rho$ denote the sum of fundamental weights of $G$.
\begin{itemize}
\item If a weight $\omega+\rho$ is singular, that is, it is orthogonal to some (positive) root of $G$, equivalently, it lies on a wall of a Weyl chamber, 
then all cohomology groups $H^i(G/P, \mathcal E_{\omega})$ vanish. 
\item Otherwise, $\omega+\rho$ is regular, that is, it lies in the interior of some Weyl chamber;  
then $H^{\ell(w)}(G/P, \mathcal E_{\omega})=V_G(w(\omega+\rho)-\rho)^*$ and any other cohomology vanishes. 
Here, $w\in W$ is the unique element of the Weyl group of $G$ such that $w(\omega+\rho)$ is strictly dominant, 
and $\ell(w)$ means the length of $w \in W$, that is, the minimal integer $\ell(w)$ such that $w$ can be expressed as a product of $\ell(w)$ simple reflections.  
\end{itemize}
\end{theorem}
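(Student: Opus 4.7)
The plan is to reduce to the case where $P = B$ is a Borel subgroup and then prove the Borel case by an induction on the length of the Weyl group element that sends $\omega + \rho$ into the strictly dominant chamber. Throughout, I write $\mathcal L_\lambda$ for the line bundle $G \times_B \mathbb C_\lambda$ on $G/B$ attached to a character $\lambda$ of $B$, and use the dot action $w \cdot \lambda := w(\lambda + \rho) - \rho$ of the Weyl group. Fix a Borel $B \subset P$ and consider the smooth projection $\pi \colon G/B \to G/P$ whose fibers are isomorphic to $P/B$, the full flag variety of the Levi $L$ of $P$. Since $\omega$ is $P$-dominant, $V(\omega)$ is an irreducible $L$-representation whose $B_L$-stable highest weight line yields the character $\mathbb C_\omega$; applying the classical Borel-Weil theorem fiberwise for $L$ gives $\pi_* \mathcal L_\omega = \mathcal E_\omega$ and $R^i \pi_* \mathcal L_\omega = 0$ for $i \geq 1$. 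The Leray spectral sequence then identifies $H^\bullet(G/P, \mathcal E_\omega)$ with $H^\bullet(G/B, \mathcal L_\omega)$, and the singular/regular dichotomy for $\omega + \rho$, together with the distinguished Weyl element $w$ and its length $\ell(w)$, are intrinsic data unchanged by this reduction.

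The Borel case hinges on the following exchange lemma. For each simple root $\alpha_i$, let $Q_i \supset B$ denote the corresponding minimal parabolic of semisimple rank one, and let $p_i \colon G/B \to G/Q_i$ be the resulting $\mathbb P^1$-bundle. Setting $n := \langle \omega + \rho, \alpha_i^\vee\rangle$, the restriction of $\mathcal L_\omega$ to each $\mathbb P^1$-fiber is $\mathcal O(n-1)$. Combined with fiberwise Serre duality on $\mathbb P^1$, this yields: when $n = 0$ one has $Rp_{i*}\mathcal L_\omega = 0$ so $H^\bullet(G/B, \mathcal L_\omega) = 0$; when $n \neq 0$ there is a canonical $G$-equivariant isomorphism
\[
H^i(G/B, \mathcal L_\omega) \;\cong\; H^{i \pm 1}(G/B, \mathcal L_{s_i \cdot \omega}),
\]
with sign determined by that of $n$. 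Each simple reflection thus either kills all cohomology or exchanges $\omega$ with $s_i \cdot \omega$ while shifting the cohomological degree by exactly one.

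Granted the exchange lemma, the Borel case follows by induction. If $\omega + \rho$ is singular, some positive root $\beta$ satisfies $\langle \omega+\rho, \beta^\vee\rangle = 0$; after a sequence of Weyl reflections this singularity is transported to a simple root $\alpha_i$, at which point the exchange lemma forces the vanishing of all $H^\bullet(G/B,\mathcal L_\omega)$. If instead $\omega + \rho$ is regular, fix a reduced expression $w = s_{i_1} \cdots s_{i_{\ell(w)}}$ for the Weyl element with $w(\omega + \rho)$ strictly dominant, and iterate the exchange lemma $\ell(w)$ times; each step peels off a simple reflection, moves the weight one inversion closer to the dominant chamber, and raises the cohomological degree by one. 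The iteration terminates at $w \cdot \omega$, which is dominant, and the classical Borel-Weil theorem supplies $H^0(G/B, \mathcal L_{w \cdot \omega}) = V_G(w(\omega+\rho) - \rho)^*$; tracing the chain backward yields Bott's formula.

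The principal obstacle is the exchange lemma itself. One must compute $Rp_{i*}\mathcal L_\omega$ explicitly on the $\mathbb P^1$-fibers, upgrade the fiberwise vanishing and duality to a canonical $G$-equivariant isomorphism on $G/Q_i$, and fix sign conventions so that $s_i \cdot \omega$ appears with the correct jump in cohomological degree. Once the exchange lemma is in hand, checking that a reduced word for $w$ drives $\omega + \rho$ monotonically toward the dominant chamber through exactly $\ell(w)$ simple reflections is standard Weyl-group combinatorics, and the two ingredients combine to give the full statement.
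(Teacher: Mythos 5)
This theorem is not proved in the paper at all: it is quoted as a classical result from Bott's 1957 paper, with the reader referred to \cite{Bo57}, \cite{Bo88}, \cite{Snow}, \cite{W} and Chapter 5 of \cite{BE} for expositions. So there is no in-paper argument to compare against; what you have written is a sketch of the standard proof (essentially Demazure's argument), and as an outline it is correct. The reduction from $G/P$ to $G/B$ via the fibration with fiber $P/B$ and the degeneration of the Leray spectral sequence is the usual first step; the exchange isomorphism $H^i(G/B,\mathcal L_\omega)\cong H^{i\pm 1}(G/B,\mathcal L_{s_i\cdot\omega})$ coming from the $\mathbb P^1$-bundles $G/B\to G/Q_i$, together with the vanishing when $\langle\omega+\rho,\alpha_i^\vee\rangle=0$, is exactly the engine of Demazure's ``very simple proof,'' and your induction over a reduced word for $w$ (regular case) and the transport of a singular wall to a simple wall (singular case) is the standard way to conclude. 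Two caveats. First, you correctly identify the exchange lemma as the only real content, but you do not prove it: the identification of $p_{i*}\mathcal L_\omega$ with $R^1p_{i*}\mathcal L_{s_i\cdot\omega}$ as $G$-equivariant bundles on $G/Q_i$ requires the $\SL_2$-computation (relative Serre duality plus the representation theory of the rank-one Levi of $Q_i$), and a complete write-up must supply it. Second, the dualities and sign conventions (whether $H^0(G/B,\mathcal L_\lambda)$ is $V_G(\lambda)$ or $V_G(\lambda)^*$, and whether $\mathbb C_\omega$ is taken as the highest- or lowest-weight line of $V(\omega)$ in the reduction step) must be fixed consistently so that the final answer matches the dual $V_G(w(\omega+\rho)-\rho)^*$ appearing in the statement; as written your reduction step glosses over this. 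Neither point is a structural flaw, but both are where the actual work lies.
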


From now on, we will consider only a rational homogeneous variety $G/P$ with Picard number 1 
for the maximal parabolic subgroup $P$ associated to a simple root $\alpha_k \in \Delta$ of $G$.
We always realize $G/P$ as a complex projective variety by giving a 
minimal equivariant embedding into the projective space of a finite-dimensional $G$-module.
Since the weight $\omega_k$ defines a character of $L \subset P_k$, the associated equivariant bundle $\mathcal E_{\omega_k}$ on $G/P_k$ is just a line bundle.
In fact, the line bundle $\mathcal E_{\omega_k}$ on $G/P_k$ is the positive generator of its Picard group and gives the minimal equivariant embedding 
$G/P_k \cong G.[v] \subset \mathbb P(V_G(\omega_k))$, 
where $v$ is a highest weight vector in the irreducible $G$-representation $V_G(\omega_k)$ with highest weight $\omega_k$; see e.g. Chapter 6 of \cite{BE}.

Since conjugacy classes of parabolic subgroups of a simple algebraic group are in one-to-one correspondence with subsets of
the set of simple roots (equivalently, nodes of the corresponding Dynkin diagram $\mathcal D(G)$),
the Dynkin diagrams $(\mathcal D(G), \alpha_k)$ with a marked node correspond to rational homogeneous varieties with Picard number 1.
Because the set $\{\omega_i \}_{1 \leq i \leq n}$ of fundamental weights is a basis for the dual Cartan subalgebra $\mathfrak t^*$, 
any weight can be expressed a linear combination of fundamental weights. 
Then we represent an integral weight and the corresponding equivariant vector bundle by inscribing the coefficients in this expression over the nodes of the Dynkin diagram for $G$.

\begin{example} \label{Gr and Q5}
\begin{enumerate}
\item The Grassmannian $\Gr(2,4)$ is isomorphic to $\SL(4, \mathbb C)/ P_2 
$ as homogeneous spaces, 
so that we denote it by the marked Dynkin diagram $(A_3, \alpha_2)$ with a crossed node corresponding to the simple root $\alpha_2$.
Consider the irreducible equivariant vector bundle 
$\mathcal E_{\omega_1}$ associated to the first fundamental weight $\omega_1$, 
which is the dual $\mathcal S^*$ of the (rank 2) universal subbundle on $\Gr(2,4)$.
Since $\mathcal E_{\omega_2} = \mathcal O(1)$, the weight defining $\mathcal E_{\omega_1}(-t)$ is $\omega_1 - t \, \omega_2$. 
Then $\omega_1 - t \, \omega_2 + \rho = 2\omega_1 + (1-t) \omega_2 + \omega_3$. 

To compute the action of the Weyl group $W$ on the weight lattice, 
we need an explicit formula for the action of a simple reflection $\sigma_i$ on a weight $\lambda$ represented in the Dynkin diagram notation.
From $\sigma_i(\lambda)=\lambda-2\frac{(\lambda, \alpha_i)}{(\alpha_i, \alpha_i)}\alpha_i=\lambda - (\lambda, \alpha_i^{\vee})\alpha_i$, 
where $(\, , \,)$ is the Cartan-Killing form on $\mathfrak t^*$ and $\alpha^{\vee}$ is the coroot of $\alpha$, 
the node coefficients for $\sigma_i(\lambda)$ are given by 
$(\sigma_i(\lambda), \alpha_j^{\vee}) = (\lambda, \alpha_j^{\vee}) - (\lambda, \alpha_i^{\vee}) (\alpha_i, \alpha_j^{\vee})$.
Because $(\alpha_i, \alpha_j^{\vee})$ are \emph{Cartan integers $c_{ij}$} which specify the semisimple Lie algebra $\mathfrak g$, 
this equation yields the following: 
Let $c$ be the coefficient of the node corresponding to $\alpha_i$. 
In order to compute $\sigma_i(\lambda)$, add $c$ to the adjacent coefficients with multiplicity, if there is a multiple edge directed towards the adjacent node, 
and then replace $c$ by $-c$. 

\vskip 0.5em

 \begin{picture} (250, 35)
 \put(0,20){\circle{5}} 
 \put(25.8,18.2){$\times$} 
 \put(61,20){\circle{5}} 

 \put(2.5,20){\line(1,0){25}} 
 \put(32.5,20){\line(1,0){25}} 

 \put(-3,28){$2$} 
 \put(18,28){$1-t$} 
 \put(58,28){$1$} 

 \put(78,20){$\longrightarrow$} 
 \put(83,13){$\sigma_2$} 

 \put(110,20){\circle{5}} 
 \put(135.8,18.2){$\times$} 
 \put(171,20){\circle{5}} 

 \put(112.5,20){\line(1,0){25}} 
 \put(142.5,20){\line(1,0){25}} 

 \put(98,28){$3-t$} 
 \put(122,8){$-1+t$} 
 \put(160,28){$2-t$} 

 \put(190,20){$\longrightarrow$} 
 \put(195,13){$\sigma_1$} 

 \put(230,20){\circle{5}} 
 \put(255.8,18.2){$\times$} 
 \put(291,20){\circle{5}} 

 \put(232.5,20){\line(1,0){25}} 
 \put(262.5,20){\line(1,0){25}} 

 \put(213,28){$-3+t$} 
 \put(258,28){$2$} 
 \put(280,28){$2-t$} 

 \put(300,20){$\longrightarrow$} 
 \put(305,13){$\sigma_3$} 

 \put(330,20){\circle{5}} 
 \put(355.8,18.2){$\times$} 
 \put(391,20){\circle{5}} 

 \put(332.5,20){\line(1,0){25}} 
 \put(362.5,20){\line(1,0){25}} 

 \put(318,28){$-3+t$} 
 \put(348,8){$4-t$} 
 \put(373,28){$-2+t$} 
 \end{picture}
\newline
After adding $\rho$, we apply simple reflections on the weight $\omega_1 - t \omega_2 + \rho$ obtaining singular weights for $t=1,2,3,4$.  
By the Borel-Weil-Bott theorem, we know $H^i(\Gr(2,4), \mathcal E_{\omega_1}(-t))=0$ for all $0\leq i \leq 4$ and $1\leq t \leq 4$. 
Hence, the irreducible equivariant vector bundle $\mathcal E_{\omega_1}$ on $\Gr(2,4)$ is Ulrich.
Similarily, we know that the (rank 2) universal quotient bundle $\mathcal E_{\omega_3}$ on $\Gr(2,4)$ is also Ulrich.

\item For a special orthogonal group $B_3=\SO(7, \mathbb C)$, 
a homogeneous space $B_3/P_1$ is a variety of isotropic 1-dimensional subspaces with respect to a nondegenerate symmetric bilinear form on $\mathbb C^7$. 
Hence, under the minimal equivariant embedding, this variety is a quadric hypersurface $\mathbb Q^5$ in $\mathbb P^6$. 

Since a Levi factor $L$ of the parabolic subgroup $P_1$ is isomorphic to $\mathbb C^* \times \SO(5, \mathbb C)$ 
and $V_{P_1}(\omega_3)$ is the 4-dimensional spin representaion of $\SO(5, \mathbb C)$, 
the irreducible equivariant vector bundle $\mathcal E_{\omega_3}$ associated to the third fundamental weight $\omega_3$
is isomorphic to the (rank 4) spinor bundle on $\mathbb Q^5$.
Since $\mathcal E_{\omega_1} = \mathcal O(1)$, 
the weight defining $\mathcal E_{\omega_3}(-t)$ is $-t \, \omega_1 + \omega_3$. 
After adding $\rho$, we apply simple reflections on the weight $-t \, \omega_1 + \omega_3 + \rho = (1-t) \omega_1 + \omega_2 + 2\omega_3$. 

\vskip 0.5em

 \begin{picture} (250, 40)
 \put(-4,18){$\times$} 
 \put(30,20){\circle{5}} 
 \put(60,20){\circle{5}} 

 \put(2.5,20){\line(1,0){25}} 
 \put(32.5,18.5){\line(1,0){25}} 
 \put(32.5,21.5){\line(1,0){25}} 

 \put(43,17){$\big >$} 

 \put(-13,28){$1-t$} 
 \put(28,28){$1$} 
 \put(58,28){$2$} 

 \put(78,20){$\longrightarrow$} 
 \put(83,13){$\sigma_1$} 

 \put(106,18){$\times$} 
 \put(140,20){\circle{5}} 
 \put(170,20){\circle{5}} 

 \put(112.5,20){\line(1,0){25}} 
 \put(142.5,18.5){\line(1,0){25}} 
 \put(142.5,21.5){\line(1,0){25}} 

 \put(153,17){$\big >$} 

 \put(95,28){$t-1$} 
 \put(130,28){$2-t$} 
 \put(168,28){$2$} 

 \put(190,20){$\longrightarrow$} 
 \put(195,13){$\sigma_2$} 

 \put(226,18){$\times$} 
 \put(260,20){\circle{5}} 
 \put(290,20){\circle{5}} 

 \put(232.5,20){\line(1,0){25}} 
 \put(262.5,18.5){\line(1,0){25}} 
 \put(262.5,21.5){\line(1,0){25}} 

 \put(273,17){$\big >$} 

 \put(227,28){$1$} 
 \put(248,28){$t-2$} 
 \put(285,28){$6-2t$} 
 \end{picture}

 \begin{picture} (250, 40)
 \put(78,20){$\longrightarrow$} 
 \put(83,13){$\sigma_3$} 

 \put(106,18){$\times$} 
 \put(140,20){\circle{5}} 
 \put(170,20){\circle{5}} 

 \put(112.5,20){\line(1,0){25}} 
 \put(142.5,18.5){\line(1,0){25}} 
 \put(142.5,21.5){\line(1,0){25}} 

 \put(153,17){$\big >$} 

 \put(105,28){$1$} 
 \put(128,28){$4-t$} 
 \put(162,28){$2t-6$} 

 \put(190,20){$\longrightarrow$} 
 \put(195,13){$\sigma_2$} 

 \put(226,18){$\times$} 
 \put(260,20){\circle{5}} 
 \put(290,20){\circle{5}} 

 \put(232.5,20){\line(1,0){25}} 
 \put(262.5,18.5){\line(1,0){25}} 
 \put(262.5,21.5){\line(1,0){25}} 

 \put(273,17){$\big >$} 

 \put(215,28){$5-t$} 
 \put(249,28){$t-4$} 
 \put(288,28){$2$} 
 \end{picture}
\newline
Because the weight $-t \, \omega_1 + \omega_3 + \rho$ are singular for $t=1, 2, 3, 4, 5$, 
we know $H^i(B_3/P_1, \mathcal E_{\omega_3}(-t))=0$ for all $0\leq i \leq 5$ and $1\leq t \leq 5$ by the Borel-Weil-Bott theorem. 
Hence, the spinor bundle $\mathcal E_{\omega_3}$ on $B_3/P_1 \cong \mathbb Q^5$ is Ulrich.
In fact, up to isomorphism the spinor bundle is the only irreducible $B_n$-equivariant Ulrich bundle on an odd-dimensional quadric $\mathbb Q^{2n-1}$; 
see Corollary 4.3 of \cite{Fo}. 
\end{enumerate}
\end{example}

\begin{remark}
Let $\mathbb Q^n \subset \mathbb P^{n+1}$ be a smooth quadric. 
If $n$ is odd, then $\mathbb Q^n \cong B_{\frac{n+1}{2}}/P_1$ 
and the spinor bundle $\Sigma=\mathcal E_{\omega_{(n+1)/2}}$
of rank $2^{\frac{n-1}{2}}$ is a unique indecomposable Ulrich bundle on $\mathbb Q^n$; see Proposition 2.5 of \cite{Be17}. 
In fact, the vector bundles $\langle \Sigma, \mathcal O(1), \cdots , \mathcal O(n)\rangle$ form a semi-orthogonal decomposition of the bounded derived category $D^b(\mathbb Q^n)$ of coherent sheaves on $\mathbb Q^n$; see \cite{Kap}. 
Likewise, if $n$ is even, then $\mathbb Q^n \cong D_{\frac{n}{2}+1}/P_1$ and 
$\mathbb Q^n$ admits exactly two indecomposable Ulrich bundles $\Sigma_+, \Sigma_-$ which are non-isomorphic spinor bundles of rank $2^{\frac{n}{2}-1}$. 
The vector bundles $\langle \Sigma_-, \Sigma_+, \mathcal O(1), \cdots , \mathcal O(n)\rangle$ form a semi-orthogonal decomposition of $D^b(\mathbb Q^n)$. 
For instance, since the Grassmannian $\Gr(2,4)$ is isomorphic to $\mathbb Q^4$, 
$\Sigma_+, \Sigma_-$ are the dual universal subbundle $\mathcal E_{\omega_1}$ and the universal quotient bundle $\mathcal E_{\omega_3}$ considered in Example \ref{Gr and Q5} (1). 
\end{remark}

\section{Criterion for irreducible equivariant Ulrich bundles }

From the Borel--Weil--Bott theorem, 
we can obtain a simple criterion for an irreducible equivariant vector bundle on a rational homogeneous variety $G/P$ to be Ulrich. 

\begin{definition} 
Let $L$ be a Levi factor of a maximal parabolic subgroup $P_k \subset G$. 
For an $L$-dominant integral weight $\omega \in \Lambda_L^+$, we define a set 
$$\Sing(\omega):=\{ t \in \mathbb Z : \omega + \rho - t \, \omega_k \mbox{ is singular}\},$$ 
where $\rho$ is the sum of fundamental weights of $G$.
\end{definition}

Denote the set of positive roots of $G$ by $\Phi^+$. 
Fix $k \in \{1, 2, \cdots, n \}$ and the parabolic subgroup $P$ associated to $\alpha_k \in \Delta$. 
Given an integer $m$, we define $\Phi_m$ as the set of all roots $\sum_{i=1}^n c_i \alpha_i$ with $c_k = m$. 
Then the maximal parabolic subalgebra associated to a simple root $\alpha_k$ has the decomposition 
$\mathfrak p = \mathfrak t \oplus \bigoplus_{\alpha \in \Phi_0} \mathfrak g_{\alpha} \oplus \bigoplus_{\alpha \in \Phi^+ \setminus \Phi_0} \mathfrak g_{\alpha}$
and we can identify the tangent space $T_o(G/P)$ at a base point $o$ of a rational homogeneous variety $G/P$ with $\bigoplus_{\alpha \in \Phi^+ \setminus \Phi_0} \mathfrak g_{-\alpha}$. 
In particular, the dimension of $G/P$ is equal to the cardinality $| \Phi^+ \setminus \Phi_0 |$. 

If $\omega = \sum_{i=1}^n a_i \omega_i$ is an $L$-dominant weight, then $a_i \geq 0$ for all $i \neq k$. 
For $\alpha \in \Phi_0 \cap \Phi^+$, 
we know that $(\omega_k , \alpha^{\vee})=0$ from the relation $(\omega_i, \alpha_j^{\vee})=\delta_{ij}$, 
which implies that $(\omega + \rho - t \, \omega_k , \alpha^{\vee}) = (\sum_{i=1}^n (a_i +1) \omega_i - t \, \omega_k , \alpha^{\vee}) = \sum_{i \neq k} (a_i +1)(\omega_i , \alpha^{\vee}) >0$ 
for any $\omega \in \Lambda_L^+$ and $t \in \mathbb Z$. 
Hence, one only need to consider positive roots in $\Phi^+ \setminus \Phi_0$ for the set $\Sing(\omega)$ and 
the cardinality of $\Sing(\omega)$ is at most the dimension of $G/P_k$. 


\begin{lemma}[Fonarev's criterion, Lemma 2.4 of \cite{Fo}] \label{Ulrich criterion}
For $\omega \in \Lambda_L^+$, 
an irreducible equivariant vector bundle $\mathcal E_{\omega}$ on a rational homogeneous variety $G/P_k$ with Picard number 1 and dimension $d$ is Ulrich if and only if 
$$\Sing(\omega) = \{ 1, 2, \cdots, d-1, d \}.$$ 
\end{lemma}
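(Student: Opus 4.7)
The plan is to convert the Ulrich vanishing conditions into membership statements for $\Sing(\omega)$ via the Borel-Weil-Bott theorem, and then to bound $|\Sing(\omega)|$ by $d$.

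Since $G/P$ has Picard number one with $P = P_k$, the hyperplane bundle under the minimal equivariant embedding is $\mathcal O_{G/P}(1) = \mathcal E_{\omega_k}$, so $\mathcal E_\omega(-k) = \mathcal E_{\omega - k\omega_k}$. Applying the Borel-Weil-Bott theorem to the weight $\omega - k\omega_k$, every cohomology group $H^i(G/P, \mathcal E_\omega(-k))$ vanishes precisely when $\omega + \rho - k\omega_k$ is singular, i.e.\ when $k \in \Sing(\omega)$; otherwise exactly one cohomology group is nonzero. Hence $\mathcal E_\omega$ is Ulrich if and only if $\{1, 2, \ldots, d\} \subseteq \Sing(\omega)$.

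To promote this inclusion to an equality, I would prove the uniform bound $|\Sing(\omega)| \le d$. An integer $t$ lies in $\Sing(\omega)$ exactly when there exists a positive root $\alpha$ with $t\,(\omega_k, \alpha^{\vee}) = (\omega + \rho, \alpha^{\vee})$. Writing $\alpha = \sum_i c_i \alpha_i$ and using $(\omega_k, \alpha_j^{\vee}) = \delta_{kj}$, one checks that $(\omega_k, \alpha^{\vee}) > 0$ exactly when $c_k > 0$, i.e.\ when $\alpha$ is a root of the nilradical $\mathfrak u$ of $\mathfrak p$; otherwise $\alpha$ is a positive root of the Levi $\mathfrak l$, and the $L$-dominance of $\omega$ together with the strict dominance of $\rho$ give $(\omega + \rho, \alpha^{\vee}) > 0$, so the singular equation $0 = (\omega + \rho, \alpha^{\vee})$ is never satisfied. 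It follows that each element of $\Sing(\omega)$ arises as $t_\alpha := (\omega + \rho, \alpha^{\vee})/(\omega_k, \alpha^{\vee})$ for some root $\alpha$ of $\mathfrak u$, and hence $|\Sing(\omega)| \le \dim \mathfrak u = \dim G/P = d$.

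Combining the two steps gives the biconditional: Ulrichness forces $\{1, \ldots, d\} \subseteq \Sing(\omega)$, and the cardinality bound then forces the reverse inclusion. The one point that demands care is the identification of positive roots $\alpha$ with $(\omega_k, \alpha^{\vee}) > 0$ as exactly the roots of $\mathfrak u$, which in the non-simply-laced setting relies on the fact that a positive root and its coroot have the same support on the Dynkin diagram; I expect this root-theoretic bookkeeping to be the main technical ingredient, while the Borel-Weil-Bott half of the argument is essentially automatic.
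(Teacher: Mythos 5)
Your proof is correct, and it is essentially the standard argument: the paper itself gives no proof of this lemma but cites Fonarev (Lemma 2.4 of \cite{Fo}), whose proof runs exactly as yours does --- Borel--Weil--Bott reduces Ulrichness to the inclusion $\{1,\dots,d\}\subseteq\Sing(\omega)$ (the nonvanishing degree $\ell(w)$ always lies in $[0,d]$ for weights dominant with respect to $P$, so ``all cohomology in degrees $0$ through $d$ vanishes'' is the same as ``the shifted weight is singular''), and the reverse inclusion follows from the bound $|\Sing(\omega)|\le d$, since each singular value $t$ is forced to equal $(\omega+\rho,\alpha^{\vee})/(\omega_k,\alpha^{\vee})$ for one of the $d$ positive roots $\alpha$ with nonzero coefficient on $\alpha_k$. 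Your root-theoretic bookkeeping is the right way to make this precise: $\alpha^{\vee}=\sum_i c_i\frac{(\alpha_i,\alpha_i)}{(\alpha,\alpha)}\alpha_i^{\vee}$ shows $(\omega_k,\alpha^{\vee})>0$ exactly when $c_k>0$, and for roots of the Levi the $L$-dominance of $\omega$ together with $(\rho,\alpha_j^{\vee})=1$ gives $(\omega+\rho,\alpha^{\vee})>0$, so no such root can contribute.
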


\begin{proof}
Assume that $\Sing(\omega) = \{ 1, 2, \cdots, d-1, d \}$. 
Because $\mathcal E_{\omega}(-t) \cong \mathcal E_{\omega - t \omega_k}$ and $\omega + \rho - t \, \omega_k$ is singular for any $t \in \Sing(\omega)$, 
it follows from the Borel--Weil--Bott theorem that $H^{\bullet}(G/P_k, \mathcal E_{\omega}(-t) )=0$ for $1 \leq t \leq d$.  
Thus, the vector bundle $\mathcal E_{\omega}$ is Ulrich.

Next, if the bundle $\mathcal E_{\omega}$ is Ulrich, then the set $\{ 1, 2, \cdots, d-1, d \}$ is contained in $\Sing(\omega)$ by the Borel--Weil--Bott theorem. 
Since $| \Sing(\omega) | \leq d$, we complete the proof.  
\end{proof}

The above Lemma is our key tool to study (non-)existence 
of irreducible equivariant Ulrich bundles on rational homogeneous varieties. 
We compute the set of singular values 
and analyze $\Sing(\omega)$ case by case. 

\begin{proposition} \label{coefficient of weight}
For $\omega = \sum_{i=1}^n a_i \omega_i \in \Lambda_L^+$, 
if an irreducible equivariant vector bundle $\mathcal E_{\omega}$ on $G/P_k$ is Ulrich, then the $k$-th coefficient $a_k$ of $\omega$ must be zero.
\end{proposition}

\begin{proof}
We write a positive root $\alpha \in \Phi^+ \setminus \Phi_0$ as $\alpha = \sum_{i=1}^n c_i \alpha_i$ with $c_k > 0$ and $c_i \geq 0$ for all $i \neq k$. 
Then we have 
\begin{eqnarray*}
(\omega + \rho - t \omega_k , \alpha^{\vee}) 
&=& \Big(\sum_{i \neq k} (a_i +1) \omega_i + (a_k +1 - t) \omega_k , \frac{1}{(\alpha, \alpha)} \sum_{j=1}^n c_j (\alpha_j, \alpha_j)\alpha_j^{\vee} \Big) \\
&=& \frac{1}{(\alpha, \alpha)} \Big\{ \sum_{i \neq k} c_i (\alpha_i, \alpha_i) (a_i +1) + c_k (\alpha_k, \alpha_k) (a_k +1 - t) \Big\}. 
\end{eqnarray*}
If the weight $\omega + \rho - t \omega_k$ is orthogonal to $\alpha$, then we obtain a singular value 
\begin{equation}\label{singular values}
t = (a_k +1) + \sum_{i \neq k} \frac{c_i (\alpha_i, \alpha_i) (a_i +1)}{c_k (\alpha_k, \alpha_k) }.
\end{equation}
So we deduce that $\mbox{min} \Sing(\omega) = a_k + 1$, where the minimum is attained when $\alpha = \alpha_k$. 
Since $\mbox{min} \Sing(\omega) = 1$ by Lemma \ref{Ulrich criterion}, we obtain $a_k = 0$.
\end{proof}

\begin{proposition} \label{subminimum}
Let the Dynkin diagram of $G$ be simply laced, that is, of type $A, D, E$. 
For $\omega = \sum_{i=1}^n a_i \omega_i \in \Lambda_L^+$, assume that an irreducible equivariant vector bundle $\mathcal E_{\omega}$ on $G/P_k$ is Ulrich.
\begin{enumerate}
\item If $\alpha_k$ is an extremal node of the Dynkin diagram of $G$, then the coefficient of $\omega$ corresonding to the node adjacent to $\alpha_k$ must be zero.
\item If $\alpha_k$ is not an extremal node of the Dynkin diagram of $G$, then at least one of the coefficients of $\omega$ corresonding to the nodes adjacent to $\alpha_k$ must be zero.
\end{enumerate}
\end{proposition}

\begin{proof}
Because the lengths of all simple roots are the same when $G$ is of type $A, D, E$, 
the singular value corresponding to a root $\alpha = \sum_{i=1}^n c_i \alpha_i$ in $\Phi^+ \setminus \Phi_0$ 
is given by $t = 1 + \sum_{i \neq k} \frac{c_i (a_i +1)}{c_k}$ 
from Equation (\ref{singular values}) above. 

(i) When $G$ is of type $A$, the highest root of $G$ is $\alpha_1 + \cdots + \alpha_n$. 
If $\alpha_k$ is not an extremal node of the Dynkin diagram of $G$, we know that $t = 1 + \sum_{i \neq k} c_i (a_i +1) >2$ except for $\alpha =  \alpha_{k-1} + \alpha_{k}$ and $\alpha_k + \alpha_{k+1}$. 
By Lemma~\ref{Ulrich criterion}, either $1+(a_{k-1}+1)$ or $1+(a_{k+1}+1)$ should be equal to 2, which implies that one of either $a_{k-1}$ or $a_{k+1}$ must be zero. 
Similarly, if $\alpha_k$ is an extremal node of the Dynkin diagram of $G$, then the coefficient of $\omega$ corresonding to the node adjacent to $\alpha_k$ must be zero. 

(ii) When $G$ is of type $D$, the highest root of $G$ is $\alpha_1 + 2\alpha_2 + \cdots + 2\alpha_{n-2} + \alpha_{n-1} + \alpha_n$. 
If $\alpha_k$ is an extremal node, then the result follows from the same argument as (i). 
If $n\geq 4$ and $\alpha_k$ is not an extremal node, 
then $t = 1 + \sum_{i \neq k} \frac{c_i (a_i +1)}{2}>2$ for all $\alpha \in \Phi_2$ 
since $\alpha \in \Phi_2$ is expressed as the sum of at least 4 simple roots. 
Thus, when $\alpha_{\ell}$ correspondes to a node adjacent to $\alpha_k$ in the Dynkin diagram of $G$, at least one of $1+(a_{\ell}+1)$ should be equal to 2. 

(iii) When $G$ is of type $E_6$, the highest root of $E_6$ is $\alpha_1 + 2\alpha_2 + 2\alpha_3 + 3\alpha_4 + 2\alpha_5 + \alpha_6$. 
Considering a simple root $\alpha_3$ of $E_6$, the Lie algebra $\mathfrak e_6$ has a gradation of depth 2 associated to $\alpha_3$. 
Because $\alpha_1 + \alpha_2 + 2\alpha_3 + 2\alpha_4 + \alpha_5$ is minimal among all roots in $\Phi_2$, 
we know that $t = 1 + \sum_{i \neq 3} \frac{c_i (a_i +1)}{2}>3$ for all $\alpha \in \Phi_2$. 
Since $\alpha_1+\alpha_3$ and $\alpha_3+\alpha_4$ are minimal roots in $\Phi_1$, 
either $1+(a_{1}+1)$ or $1+(a_{4}+1)$ should be equal to 2, which implies that one of either $a_{1}$ or $a_{4}$ must be 0. 
Next, for a gradation of $\mathfrak e_6$ of depth 3 associated to $\alpha_4$, 
$\alpha_2 + \alpha_3 + 2\alpha_4 + \alpha_5$ and $\alpha_1 + \alpha_2 + 2\alpha_3 + 3\alpha_4 + 2\alpha_5 + \alpha_6$ are minimal among all roots in $\Phi_2$ and $\Phi_3$, respectively. 
Thus, we get the inequalities $t = 1 + \sum_{i \neq 4} \frac{c_i (a_i +1)}{2}>2$ for all $\alpha \in \Phi_2$ and $t = 1 + \sum_{i \neq 4} \frac{c_i (a_i +1)}{3}>3$ for all $\alpha \in \Phi_3$.  

Likewise, we will check that the singular value 2 is attained at a minimal root in $\Phi_1$ for each cases in Sections 5, 7 and 8. 
\end{proof}

When $G/P_k$ is either a Hermitian symmetric space of compact type or an adjoint variety, 
the maximum of $\Sing(\omega)$ can be computed by the similar argument: 

\begin{proposition} \label{maximum of singular values} 
Assume that $G/P_k$ is a Hermitian symmetric space of compact type. 
If an irreducible equivariant vector bundle $\mathcal E_{\omega}$ on $G/P_k$ is Ulrich, 
then the maximum of $\Sing(\omega)$ is a singular value attained at the highest root $\theta$ of the Lie algebra $\mathfrak g$. 
\end{proposition}

\begin{proof}
An \emph{irreducible Hermitian symmetric space of compact type} is a compact homogeneous space $G/P$ with a simple Lie group $G$ and a maximal parabolic subgroup $P$ 
such that the isotropy representation of $P$ on the tangent space $T_o(G/P)$ at a base point $o$ is irreducible. 
Thus, if $G/P_k$ is a Hermitian symmetric space of compact type, 
then the decomposition of $\mathfrak g$ associated to a simple root $\alpha_k$ has a gradation of depth 1 and the parabolic subalgebra $\mathfrak p_k$ acts irreducibly on $\mathfrak g/\mathfrak p_k$. 
Equivalently, the coefficient $c_k$ of $\alpha_k$ in an expansion of the highest root of $\mathfrak g$ is 1. 
From Equation (\ref{singular values}) in the proof of Proposition \ref{coefficient of weight}, 
the singular value corresponding to a root $\alpha = \sum_{i=1}^n c_i \alpha_i$ in $\Phi^+ \setminus \Phi_0$ is given by $t = 1 + \sum_{i \neq k} \frac{c_i (\alpha_i, \alpha_i) (a_i +1)}{(\alpha_k, \alpha_k) }$ because $\Phi_1 = \Phi^+ \setminus \Phi_0$. 
Hence the maximum of $\Sing(\omega)$ is a singular value attained at the highest root $\theta$. 
\end{proof}

If $G/P_k$ is not a Hermitian symmetric space of compact type, then the decomposition of $\mathfrak g$ associated to a simple root $\alpha_k$ has a grading of depth greater than 1. 
In this case, we need to compare the singular values for maximal roots in each component $\mathfrak g_i$ of the graded Lie algebra in order to get the maximum of $\Sing(\omega)$. 

\begin{proposition} \label{maximum of singular values: adjoint variety}
Let $G/P_k$ be an adjoint variety such that $\mbox{\rm rank} (G) \geq 2$ and $G$ is not of type $A$. 
If an irreducible equivariant vector bundle $\mathcal E_{\omega}$ on $G/P_k$ is Ulrich, 
then the maximum of $\Sing(\omega)$ is a singular value attained at the root $\theta - \alpha_k$, where $\theta$ is the highest root of $\mathfrak g$. 
\end{proposition}

\begin{proof}
An \emph{adjoint variety} of $G$ is the closed orbit of a highest root vector in the adjoint representation of $G$. 
Since $G/P_k$ is an adjoint variety such that $\mbox{\rm rank} (G) \geq 2$ and $G$ is not of type $A$, 
by Theorem 4.1 of \cite{Ya}  
the simple Lie algebra $\mathfrak g$ admits a \emph{contact gradation}, that is, 
the decomposition $\mathfrak g = \mathfrak g_{-2} \oplus \mathfrak g_{-1} \oplus \mathfrak g_{0} \oplus \mathfrak g_{1} \oplus \mathfrak g_{2}$ associated to a simple root $\alpha_k$ has a gradation of depth 2, $\dim \mathfrak g_{2} = 1$, and the Lie bracket operation $[\, , \, ] \colon \mathfrak g_{1} \times \mathfrak g_{1} \to \mathfrak g_{2}$ is nondegenerate. 
Then we have $\Phi_2 = \{ \theta \}$ from $\dim \mathfrak g_{2} = 1$.
Moreover, since $[\, , \, ] \colon \mathfrak g_{1} \times \mathfrak g_{1} \to \mathfrak g_{2}$ is nondegenerate, 
for each $\alpha \in \Phi_1$ there exists $\beta \in \Phi_1$ such that $\alpha + \beta = \theta$.
Thus, $\Phi_1 = \{ \alpha \in \Phi^+ \mid \theta - \alpha \mbox{ is a root} \}$ and $\theta - \alpha_k$ is the maximal root in $\Phi_1$. 
The result follows from Equation (\ref{singular values}) in the proof of Proposition \ref{coefficient of weight}.
\end{proof}

\begin{remark}
For $G=\SL(n+1, \mathbb C)$, the adjoint representation on the Lie algebra $\mathfrak{sl}(n+1, \mathbb C)$  is an irreducible highest module $V_G(\omega_1 + \omega_n)$ 
and the adjoint variety of $G$ is a homogeneous space $G/(P_1 \cap P_n)$ with Picard number 2.
\end{remark}

\section{$G_2$-homogeneous varieties}

We follow notation of \cite{FH} for the basics on the representation theory of a Lie algebra. 
To begin with, we collect basic facts about the simple Lie group $G_2$. 
Let us choose an orthonormal basis $\{ L_1, L_2 \}$ for the dual Cartan subalgebra $\mathfrak t^*$. 
Then the simple roots can be taken as follows: 
$\alpha_1 = L_1$, $\alpha_2 = \frac{1}{2}(-3 L_1 + \sqrt{3} L_2)$.

\begin{center}
\begin{picture} (150, 35)
 \put(10,20){$(G_2)$}

 \put(80,20){\circle{5}} 
 \put(110,20){\circle{5}} 

 \put(82.5,18){\line(1,0){25}} 
 \put(82.5,20){\line(1,0){25}} 
 \put(82.5,22){\line(1,0){25.5}} 

 \put(93 ,17.5){$<$}

 \put(75,10){$\alpha_1$} 
 \put(106,10){$\alpha_2$} 
 \end{picture}
\end{center}
The complex Lie group $G_2$ has 6 positive roots:
$\Phi^+= \{ \alpha_1, \alpha_2, \alpha_1 + \alpha_2, 2 \alpha_1 + \alpha_2, 3\alpha_1 + \alpha_2, 3\alpha_1 + 2\alpha_2 \}
= \{ L_1, -\frac{3}{2} L_1 + \frac{\sqrt{3}}{2}L_2, -\frac{1}{2}L_1 + \frac{\sqrt{3}}{2}L_2, \frac{1}{2}L_1 + \frac{\sqrt{3}}{2}L_2, \frac{3}{2}L_1 + \frac{\sqrt{3}}{2}L_2,  \sqrt{3}L_2 \}$; 
see e.g. page 332 in \cite{FH}.
From the relation $(\omega_i, \alpha_j^{\vee})=\delta_{ij}$, 
the fundamental weights corresponding to the system of simple roots are 
$$\omega_1 = \frac{1}{2} (L_1 + \sqrt{3} L_2), \quad \omega_2 = \sqrt{3} L_2.$$
Thus we know that $\rho = \omega_1 + \omega_2 = \frac{1}{2} L_1 + \frac{3 \sqrt{3}}{2} L_2$.

\subsection{Homogeneous variety $G_2/P_1$}
It is well-known that the rational homogeneous variety $G_2/P_1$ is isomorphic to 
a 5-dimensional quadric $\mathbb{Q}^5$ in $\mathbb{P}^6$; see e.g. page 391 in \cite{FH}. 
The 
complex Lie group $G_2$ is the automorphism group of the Cayley algebra $\mathbb O$ of octonions with complex coefficients. 
In particular, the fundamental representation $V_{G_2}(\omega_1)$ of $G_2$ has dimension 7 
and is identified with the subspace $\mbox{Im } \mathbb O$ of imaginary octonions.
Since $G_2$ acts via algebra automorphisms, the action of $G_2$ on $\mbox{Im } \mathbb O$ preserves the multiplicative norm $q$.
From this fact, we have an inclusion $G_2 \subset \SO(7, \mathbb C)$. 
In fact, $G_2$ can be characterized as a stabilizer in $\GL(7, \mathbb C)$ 
of a nondegenerate quadratic form $q$ and a generic skew-symmetric 3-form $\Omega \in \wedge^3 (\mbox{Im } \mathbb O)^*$.

For a reductive part $L$ of the parabolic subgroup $P_1 \subset G_2$, 
an $L$-dominant integral weight $\omega$ is expressed as a linear combination 
$$\omega = a \omega_1 + b \omega_2 = \frac{1}{2} a L_1 + \Big(\frac{\sqrt{3}}{2} a+ \sqrt{3} b \Big) L_2$$
with $b \geq 0$.
Then $ \omega + \rho - t \, \omega_1 =  (\frac{1}{2}a + \frac{1}{2}-\frac{1}{2}t) L_1 + (\frac{\sqrt{3}}{2} a + \sqrt{3} b + \frac{3 \sqrt{3}}{2} - \frac{\sqrt{3}}{2}t) L_2 $.

\begin{proposition} \label{G2/P1}
There are no irreducible equivariant Ulrich bundles on 
$G_2/P_1$.
\end{proposition}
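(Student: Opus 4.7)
The plan is to invoke Fonarev's criterion (Lemma~\ref{Ulrich criterion}) with $d=\dim G_2/P_1 = 5$, so $\mathcal{E}_\omega$ is Ulrich exactly when $\mathrm{Sing}(\omega) = \{1,2,3,4,5\}$. Write a general $L$-dominant weight as $\omega = a\omega_1 + b\omega_2$ with $a \in \mathbb{Z}$ and $b \geq 0$. The excerpt already gives
\[
\omega + \rho - t\omega_1 = \tfrac{1}{2}(a+1-t)L_1 + \tfrac{\sqrt{3}}{2}(a+2b+3-t)L_2,
\]
so I would proceed by pairing this weight with each of the six positive roots listed in $\Phi^+$ and recording the unique $t$ that makes the pairing vanish. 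This produces the candidate list
\[
t_1 = a+1,\ \ t_2 = a+b+2,\ \ t_3 = a+\tfrac{3b+5}{2},\ \ t_4 = a+2b+3,\ \ t_5 = a+3b+4,
\]
coming from the roots $L_1$, $\tfrac{3}{2}L_1+\tfrac{\sqrt{3}}{2}L_2$, $\tfrac{1}{2}L_1+\tfrac{\sqrt{3}}{2}L_2$, $\sqrt{3}L_2$, and $-\tfrac{1}{2}L_1+\tfrac{\sqrt{3}}{2}L_2$ respectively, while the sixth root $-\tfrac{3}{2}L_1+\tfrac{\sqrt{3}}{2}L_2$ yields the equation $6b+6=0$ which has no solution for $b \geq 0$, hence contributes nothing to $\mathrm{Sing}(\omega)$.

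Next I would note that $\mathrm{Sing}(\omega)$ consists of those $t_i$ that are integers, so I get at most $5$ integer values, and for the Ulrich property I need them to be exactly $\{1,2,3,4,5\}$. Observe that $t_1 < t_2 < t_4 < t_5$ form a strictly increasing sequence with $t_2 - t_1 = b+1$, $t_4 - t_2 = b+1$, $t_5 - t_4 = b+1$, so these four integer values alone sit in arithmetic progression with common difference $b+1$.

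The case analysis is then short. For $b \geq 2$ the four integers $t_1, t_2, t_4, t_5$ span an interval of length $3(b+1) \geq 9 > 4$, so they cannot all fit inside $\{1,\dots,5\}$. For $b=0$ we have $t_3 = a+5/2 \notin \mathbb{Z}$, so $\mathrm{Sing}(\omega) \subseteq \{a+1, a+2, a+3, a+4\}$ has at most four elements and thus cannot equal $\{1,2,3,4,5\}$. The only remaining case is $b=1$: here $t_3 = a+4$ is an integer and the five candidates become $\{a+1, a+3, a+4, a+5, a+7\}$; forcing $1 \in \mathrm{Sing}(\omega)$ requires $a=0$, producing $\{1,3,4,5,7\} \neq \{1,2,3,4,5\}$, a contradiction.

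The main obstacle I anticipate is not conceptual but bookkeeping: one has to correctly list all six positive roots of $G_2$ and compute their pairings (in particular keeping track of which root gives a half-integer $t$, and which gives no finite singular value at all). Once the candidate list $\{t_1,\dots,t_5\}$ is written down, the forced arithmetic progression among $t_1, t_2, t_4, t_5$ immediately kills all but the tiny cases $b \in \{0,1\}$, which one disposes of by direct inspection.
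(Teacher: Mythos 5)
Your proposal is correct and follows essentially the same route as the paper: compute the candidate singular values of $t$ by pairing $\omega+\rho-t\omega_1$ with the positive roots of $G_2$ (your list $t_1,\dots,t_5$ agrees exactly with the paper's $\Sing(\omega)=\{a+1,\,a+1+(b+1),\,a+1+\tfrac{3}{2}(b+1),\,a+1+2(b+1),\,a+1+3(b+1)\}$, and the sixth root indeed contributes nothing), then apply Fonarev's criterion. The only difference is that the paper asserts without detail that no $a,b$ give $\Sing(\omega)=\{1,\dots,5\}$, whereas you supply the short arithmetic-progression and parity argument making this explicit.
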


\begin{proof}
For $\omega \in \Lambda_L^+$, 
the weight $\omega+\rho-t\,\omega_1$
is singular if and only if it is orthogonal to one of roots of $G_2$. 
Considering the five positive roots 
$\alpha_1 = L_1, 
3\alpha_1 + \alpha_2 = \frac{3}{2}L_1 + \frac{\sqrt{3}}{2}L_2, 
2\alpha_1 +\alpha_2 = \frac{1}{2}L_1 + \frac{\sqrt{3}}{2}L_2, 
3\alpha_1 + 2\alpha_2 = \sqrt{3}L_2,
\alpha_1 +\alpha_2 = -\frac{1}{2}L_1 + \frac{\sqrt{3}}{2}L_2$ 
in $\Phi^+ \setminus \Phi_0$, 
we obtain the set of singular values 
$$\Sing(\omega)=\Big\{ a+1, a+1+(b+1), a+1+\frac{3}{2}(b+1), a+1+2(b+1), a+1+3(b+1) \Big\}. $$
For example, if the weight $\omega+\rho-t \, \omega_1$ is singular with respect to the positive root $\frac{3}{2}L_1 + \frac{\sqrt{3}}{2}L_2$,
then we have 
$\frac{3}{2}(\frac{1}{2}a + \frac{1}{2}-\frac{1}{2}t) + \frac{\sqrt{3}}{2}(\frac{\sqrt{3}}{2} a + \sqrt{3} b + \frac{3 \sqrt{3}}{2} - \frac{\sqrt{3}}{2}t) 
= \frac{3}{2}a + \frac{3}{2}b + 3 -\frac{3}{2}t = 0$,  
hence $t=a + b + 2$. 
By Proposition~\ref{coefficient of weight}, we have $a=0$ so that $\Sing(\omega)= \{ 1, b+2, \frac{3}{2}b+\frac{5}{2}, 2b+3, 3b+4 \}$.  
However, there is no integer $b$ such that $\Sing(\omega)=\{ 1,2,3,4,5 \}$. 
Therefore, there are no irreducible equivariant Ulrich bundles on the rational homogeneous variety $G_2/P_1$ by Lemma \ref{Ulrich criterion}.
\end{proof}

\begin{remark}
In Example \ref{Gr and Q5} (2), we show that the 5-dimensional hyperquadric 
$B_3/P_1 \cong \mathbb Q^5 \subset \mathbb{P}^6$ admits an Ulrich bundle.
Furthermore, we know that the spinor bundle of rank 4 is the only irreducible $B_3$-equivariant Ulrich bundle on $\mathbb Q^5$.
From the inclusion $G_2 \subset \SO(7, \mathbb C)$, 
we can regard this Ulrich bundle as a non-irreducible $G_2$-equivariant bundle on $G_2/P_1 \cong \mathbb Q^5$. 
\end{remark}

\subsection{Homogeneous variety $G_2/P_2$}
The rational homogeneous variety $G_2/P_2 \subset \mathbb P(V_{G_2}(\omega_2))=\mathbb P^{13}$ has dimension 5 and Fano index 3
(see Section 9 of \cite{Snow} for dimensions and Fano indexes of rational homogeneous varieties). 
Note that $G_2/P_2$ is the adjoint variety of $G_2$ 
since the second fundamental representation $V_{G_2}(\omega_2)$ is the adjoint representation $\mathfrak g_2$.

\begin{proposition}
There are no irreducible equivariant Ulrich bundles on 
$G_2/P_2$.
\end{proposition}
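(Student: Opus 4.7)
The plan is to emulate the proof of Proposition \ref{G2/P1} for the $5$-dimensional variety $G_2/P_2$ by invoking Fonarev's criterion (Lemma \ref{Ulrich criterion}). Thus I need to show that no $L$-dominant weight $\omega = a\omega_1 + b\omega_2$ (with $a \geq 0$ and $b \in \mathbb{Z}$) satisfies $\Sing(\omega) = \{1,2,3,4,5\}$.

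First I would compute, for each of the six positive roots of $G_2$, the unique value of $t$ (if any) for which $\omega + \rho - t\omega_2$ is orthogonal to that root. Since $\omega_2 = \sqrt{3}L_2$ is perpendicular to $L_1$, the pairing with the positive root $L_1 = \alpha_1$ is $t$-independent and evaluates to $\tfrac{a+1}{2}$, which is strictly positive for $a\geq 0$; hence $L_1$ contributes nothing to $\Sing(\omega)$. The remaining five positive roots each yield a linear equation in $t$ whose solutions I would tabulate. The two long roots $\pm\tfrac{3}{2}L_1 + \tfrac{\sqrt{3}}{2}L_2$ produce the always-integer values $b+1$ and $a+b+2$, while the three other roots $\sqrt{3}L_2$ and $\pm\tfrac{1}{2}L_1 + \tfrac{\sqrt{3}}{2}L_2$ produce values that are integers only under congruence conditions on $a$ modulo $2$ or modulo $3$.

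The concluding step is a spread argument. For $|\Sing(\omega)| = 5$, all five candidate values must be integers, which forces $a$ to be odd \emph{and} congruent to $2$ modulo $3$, hence $a \geq 5$. On the other hand, for the two always-integer values $b+1$ and $a+b+2$ to both lie in $\{1,\dots,5\}$, their spread $a+1$ would need to be at most $4$, i.e.\ $a\leq 3$. These two requirements are incompatible, so no admissible $\omega$ produces $\Sing(\omega)=\{1,\dots,5\}$, and the proposition follows. I expect the only real obstacle to be bookkeeping---tracking the five rational values of $t$ and their integrality conditions---after which the final contradiction is immediate.
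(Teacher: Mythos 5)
Your proposal is correct and follows essentially the same route as the paper: it computes the five candidate singular values $b+1$, $\tfrac{1}{3}(a+1)+(b+1)$, $\tfrac{1}{2}(a+1)+(b+1)$, $\tfrac{2}{3}(a+1)+(b+1)$, $(a+1)+(b+1)$ from the five relevant positive roots and then rules out $\Sing(\omega)=\{1,\dots,5\}$; your explicit spread-versus-congruence contradiction ($a\geq 5$ from integrality versus $a\leq 3$ from the spread $a+1\leq 4$) just makes precise the step the paper leaves as ``there are no $a$ and $b$.'' The only cosmetic slip is calling $\pm\tfrac{3}{2}L_1+\tfrac{\sqrt{3}}{2}L_2$ ``the two long roots'' ($\sqrt{3}L_2$ is also long), which does not affect the argument.
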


\begin{proof}
By computations for $\omega + \rho - t \, \omega_2 = (\frac{1}{2}a + \frac{1}{2}) L_1 + (\frac{\sqrt{3}}{2} a + \sqrt{3} b + \frac{3 \sqrt{3}}{2} - \sqrt{3} t) L_2 $ as in the proof of Proposition~\ref{G2/P1}, 
we get the set $\Sing(\omega)$ of singular values. 
Considering the five positive roots 
$-\frac{3}{2} L_1 + \frac{\sqrt{3}}{2}L_2, 
-\frac{1}{2}L_1 + \frac{\sqrt{3}}{2}L_2, 
\sqrt{3}L_2,  
\frac{1}{2}L_1 + \frac{\sqrt{3}}{2}L_2, 
\frac{3}{2}L_1 + \frac{\sqrt{3}}{2}L_2$ 
in $\Phi^+ \setminus \Phi_0$, 
we obtain that 
$$\Sing(\omega)=\Big\{ b+1, \frac{1}{3}(a + 1) + (b+1), \frac{1}{2}(a+1) + (b+1), \frac{2}{3}(a+1) + (b+1), (a+1) + (b+1) \Big\}. $$
By Proposition \ref{coefficient of weight}, we have $b=0$ so that $\Sing(\omega)= \{ 1, \frac{1}{3}a+\frac{4}{3}, \frac{1}{2}a+\frac{3}{2}, \frac{2}{3}a+\frac{5}{3}, a+2 \}$.  
However, there is no integer $a$ such that $ \Sing(\omega)=\{ 1,2,3,4,5 \} $. 
Therefore, there are no irreducible equivariant Ulrich bundles on the rational homogeneous variety $G_2/P_2$ by Lemma \ref{Ulrich criterion}.
\end{proof}

\section{$E_6$-homogeneous varieties}

We recall some facts about the simple Lie group $E_6$. 
When we choose an orthonormal basis $\{L_1, \cdots, L_6\}$ for the dual Cartan subalgebra $\mathfrak t^*$, 
the simple roots can be taken as follows: 
$\alpha_1 = \frac{1}{2} (L_1-L_2-L_3-L_4-L_5+\sqrt{3}L_6)$,  
$\alpha_2 = L_1 + L_2$, $\alpha_3 = L_2 - L_1$, $\alpha_4 = L_3 - L_2$, $\alpha_5 = L_4 - L_3$, $\alpha_6 = L_5 - L_4$.

\begin{center}
\begin{picture} (180, 55)
 \put(10,20){$(E_6)$}

 \put(50,20){\circle{5}} 
 \put(110,50){\circle{5}} 
 \put(80,20){\circle{5}} 
 \put(110,20){\circle{5}} 
 \put(140,20){\circle{5}} 
 \put(170,20){\circle{5}} 

 \put(52.5,20){\line(1,0){25}} 
 \put(82.5,20){\line(1,0){25}} 
 \put(112.5,20){\line(1,0){25}} 
 \put(142.5,20){\line(1,0){25}} 
 \put(110,22.5){\line(0,1){25}} 

 \put(45,10){$\alpha_1$} 
 \put(116,48){$\alpha_2$} 
 \put(75,10){$\alpha_3$} 
 \put(106,10){$\alpha_4$} 
 \put(138,10){$\alpha_5$} 
 \put(168,10){$\alpha_6$} 
 \end{picture}
\end{center}
The Lie group $E_6$ has 36 positive roots; 
see e.g. page 333 in \cite{FH}:
$$\Phi^+=\{ L_i+L_j\}_{1\leq i<j\leq5} \cup \{ L_i-L_j\}_{1\leq j<i\leq5} \cup \Big \{ \frac{1}{2} (\sum_{i=1}^{5} (-1)^{n(i)}L_i + \sqrt{3} L_6) : \sum_{i=1}^{5} n(i) \mbox{ is even}  \Big\}.$$
The highest root of $E_6$ is $\alpha_1 + 2\alpha_2 + 2\alpha_3 + 3\alpha_4 + 2\alpha_5 + \alpha_6$.

The fundamental weights corresponding to the system of simple roots are 
\begin{eqnarray*}
\omega_1 &=& \frac{2\sqrt{3}}{3}L_6, \\
\omega_2 &=& \frac{1}{2}(L_1+L_2+L_3+L_4+L_5) + \frac{\sqrt{3}}{2}L_6, \\
\omega_3 &=& \frac{1}{2}(-L_1+L_2+L_3+L_4+L_5) + \frac{5\sqrt{3}}{6}L_6, \\
\omega_4 &=& L_3+L_4+L_5 + \sqrt{3}L_6, \\
\omega_5 &=& L_4+L_5 + \frac{2\sqrt{3}}{3}L_6, \\
\omega_6 &=& L_5 + \frac{\sqrt{3}}{3}L_6.
\end{eqnarray*}
Thus we know that $\rho = \sum_{i=1}^{6} \omega_i = L_2 + 2L_3 + 3L_4 + 4L_5 + 4\sqrt{3}L_6$.

\subsection{Homogeneous variety $E_6/P_1 \cong E_6/P_6$}

The \emph{(complex) Cayley plane} $E_6/P_1$ is the closed orbit 
in the projectivization of the minimal representation $V_{E_6}(\omega_1)$ of the simply-connected complex Lie group $E_6$. 
This is a smooth projective variety of dimension 16 and Fano index 12. 
The Cayley plane is one of the two exceptional Hermitian symmetric spaces of compact type, 
and appears as the last Severi variety.  
This can also be identified with the variety $\mathbb{OP}^2 \subset \mathbb P(\mathcal J_3(\mathbb O))$ of rank 1 matrices 
in the exceptional simple complex Jordan algebra $\mathcal J_3(\mathbb O)$ consisting of octonionic-Hermitian matrices of order 3.
Because the highest weight $E_6$-module $V_{E_6}(\omega_6)$ is dual to $V_{E_6}(\omega_1) = \mathcal J_3(\mathbb O) \cong \mathbb C^{27}$, 
the dual Cayley plane $E_6/P_6 \subset \mathbb P^{26}$ is projectively equivalent to $E_6/P_1\subset \mathbb P^{26}$.

For a reductive part $L$ of $P_1 \subset E_6$, an $L$-dominant weight $\omega$ is expressed as a linear combination 
\begin{eqnarray*}
\omega &=& a\omega_1 + b\omega_2 + c\omega_3 + d\omega_4 + e\omega_5 + f\omega_6 \\
&=& \frac{1}{2}(b-c)L_1 +  \frac{1}{2}(b+c)L_2 + \Big (\frac{1}{2}b+\frac{1}{2}c+d \Big)L_3  + \Big(\frac{1}{2}b+\frac{1}{2}c+d+e \Big)L_4 + \Big(\frac{1}{2}b+\frac{1}{2}c+d+e+f \Big)L_5 \\
&& + \Big(\frac{2\sqrt{3}}{3}a+\frac{\sqrt{3}}{2}b+\frac{5\sqrt{3}}{6}c+\sqrt{3}d+\frac{2\sqrt{3}}{3}e+\frac{\sqrt{3}}{3}f \Big)L_6 
\qquad \text{with $b,c,d,e,f \geq 0$.}
\end{eqnarray*}

\begin{proposition}\label{E_6/P_1}
The only irreducible equivariant Ulrich bundle on the Cayley plane $E_6/P_1$ is $\mathcal E_{\omega_5+3\omega_6}$.
\end{proposition}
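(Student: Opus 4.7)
The plan is to apply Fonarev's criterion (Lemma~\ref{Ulrich criterion}): since $\dim(E_6/P_1) = 16$, the bundle $\mathcal{E}_\omega$ is Ulrich if and only if $\Sing(\omega) = \{1,\ldots,16\}$. First I would identify the positive roots of $E_6$ that contribute to $\Sing(\omega)$. Because $\omega_1 = \tfrac{2\sqrt{3}}{3} L_6$, the $20$ positive roots of the form $\pm L_i \pm L_j$ (the $D_5$-roots of the Levi) are orthogonal to $\omega_1$ and are never orthogonal to $\omega+\rho$ for $\omega \in \Lambda_L^+$; only the $16$ positive roots $\alpha_\epsilon = \tfrac{1}{2}\bigl(\sum_{i=1}^{5} \epsilon_i L_i + \sqrt{3} L_6\bigr)$, with $\epsilon \in \{\pm 1\}^5$ and $\prod_i \epsilon_i = +1$, contribute. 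Writing $y_i$ for the $L_i$-coefficient of $\omega+\rho$ for $1 \le i \le 5$, a direct pairing gives
\[
t_\epsilon = C + \tfrac{1}{2}\sum_{i=1}^{5}\epsilon_i y_i,
\]
where $C$ depends on $\omega$ but not on $\epsilon$; so the Ulrich condition is equivalent to $\{t_\epsilon\} = \{1,\ldots,16\}$.

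Second, I would convert this into combinatorial constraints. Summing over the $16$ configurations and using the easy parity fact $\sum_\epsilon \epsilon_i = 0$ yields $16C = 136$, forcing $C = 17/2$; this pins down $a$ in terms of $b,c,d,e,f$. Reindexing via $T = \{i : \epsilon_i = -1\}$ with $|T|$ even and setting $\sigma_T = \sum_{i \in T} y_i$ and $S = \sum_i y_i$, the Ulrich condition becomes the statement that $\{\sigma_T : |T| \text{ even}\}$ equals the $16$-term arithmetic progression $\{(S-15)/2, (S-13)/2, \ldots, (S+15)/2\}$. In particular $S$ must be an odd integer, which already excludes the parity case $b+c$ odd (there the $y_i$ are half-integers and $S$ is not even integral).

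The crucial observation is the involution $T \mapsto T^c$: it swaps the parity of $|T|$ and acts by $\sigma_{T^c} = S - \sigma_T$. Since the arithmetic progression above is symmetric about $S/2$, the odd-$|T|$ partial sums form the same set, so every value of $\sigma_T$ over all $T \subseteq \{1,\ldots,5\}$ is attained exactly twice. Translating into generating functions,
\[
\prod_{i=1}^{5}(1 + z^{y_i}) \;=\; 2\, z^{(S-15)/2}(1+z)(1+z^2)(1+z^4)(1+z^8).
\]
A case split on $y_1 = (b-c)/2 \in \mathbb{Z}$ then finishes. If $y_1 > 0$ the left-hand side has constant term $1$, contradicting the right-hand side (whose constant term is $0$ or $2$). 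If $y_1 < 0$ then $z^{y_1}$ is the unique minimum-degree monomial of the LHS (as $y_i \ge 1$ for $i \ge 2$) and appears with coefficient $1$, contradicting the RHS coefficient $2$. Hence $y_1 = 0$ and the identity collapses to $\prod_{i=2}^{5}(1+z^{y_i}) = (1+z)(1+z^2)(1+z^4)(1+z^8)$, forcing $(y_2, y_3, y_4, y_5) = (1,2,4,8)$. Unwinding the definitions of the $y_i$ yields $b = c = d = 0$, $e = 1$, $f = 3$, and $a = 0$ from $C = 17/2$, so $\omega = \omega_5 + 3\omega_6$ is the unique solution.

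The main obstacle I anticipate is spotting the $T \leftrightarrow T^c$ symmetry that collapses an ostensibly $6$-parameter Diophantine problem into a single generating-function factorization; without it one is left with a case-by-case analysis in six variables, which is presumably the approach taken in the paper via Sage. Once the factorization identity is in hand, the case split on $y_1$ is essentially immediate.
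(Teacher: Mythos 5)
Your proposal is correct, and its front end coincides with the paper's: both apply Fonarev's criterion with $\dim(E_6/P_1)=16$, observe that only the sixteen positive roots $\tfrac{1}{2}\bigl(\sum_{i=1}^{5}\epsilon_i L_i+\sqrt{3}L_6\bigr)$ can annihilate $\omega+\rho-t\omega_1$ (the twenty $D_5$-roots of the Levi pair positively with $\omega+\rho$ and are orthogonal to $\omega_1$), and reduce to the condition that the sixteen values $t_\epsilon$ be exactly $\{1,\dots,16\}$. Where you genuinely diverge is in solving that condition. The paper lists all sixteen linear forms explicitly (computed in Sage) and reads the answer off the extremal entries: $a+1=1$, $a+c+2=2$, $a+c+d+3=3$ force $a=c=d=0$, the two largest entries force $b=0$, and then $e=1$, $f=3$. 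You instead average over $\epsilon$ to pin down the constant $C=17/2$, use the complementation involution $T\mapsto T^c$ together with the symmetry of the target progression about $S/2$ to double the index set, and encode the whole condition as the Laurent-polynomial identity $\prod_{i=1}^{5}(1+z^{y_i})=2\,z^{(S-15)/2}(1+z)(1+z^2)(1+z^4)(1+z^8)$, whose rigidity (constant/lowest-degree coefficient comparison plus uniqueness of binary subset-sum representations) forces $(y_1,\dots,y_5)=(0,1,2,4,8)$. I checked the computations ($C=a+\tfrac{3}{4}b+\tfrac{5}{4}c+\tfrac{3}{2}d+e+\tfrac{1}{2}f+6$, $t_{\{2,3,4,5\}}=a+1$ matching the paper's list, etc.) and they are consistent. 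Your route is more structural: it explains why the answer is the ``binary'' weight with $(y_2,\dots,y_5)=(1,2,4,8)$ and would adapt to the analogous computation for $E_7/P_1$, whereas the paper's is a shorter-to-state but computer-assisted inspection of the explicit list. One small point: your chain of deductions establishes only necessity, so you should close with the (one-line) sufficiency check that for $\omega=\omega_5+3\omega_6$ one has $t_T=16-\sigma_T$ with $\{\sigma_T:|T|\text{ even}\}=\{0,\dots,15\}$, hence $\Sing(\omega_5+3\omega_6)=\{1,\dots,16\}$; the paper records this verification explicitly.
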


\begin{proof}
For $\omega \in \Lambda_L^+$, 
the weight $\omega+\rho-t\,\omega_1= \frac{1}{2}(b-c)L_1 +  \{\frac{1}{2}(b+c)+1\}L_2 + (\frac{1}{2}b+\frac{1}{2}c+d+2)L_3 + (\frac{1}{2}b+\frac{1}{2}c+d+e+3)L_4 + 
(\frac{1}{2}b+\frac{1}{2}c+d+e+f+4)L_5 + \{ \frac{2\sqrt{3}}{3}(a-t)+\frac{\sqrt{3}}{2}b+\frac{5\sqrt{3}}{6}c+\sqrt{3}d+\frac{2\sqrt{3}}{3}e+\frac{\sqrt{3}}{3}f+4\sqrt{3}\}L_6$
is singular if and only if it is orthogonal to one of roots of $E_6$. 
To compute the set $\Sing(\omega)$, it suffices to consider the 16 positive roots 
$\{ \frac{1}{2} (\sum_{i=1}^{5} (-1)^{n(i)}L_i + \sqrt{3} L_6) : \sum_{i=1}^{5} n(i) \mbox{ is even}\}$ in $\Phi_1 = \Phi^+ \setminus \Phi_0$.

Suppose that an irreducible equivariant vector bundle $\mathcal E_{\omega}$ on the Cayley plane $E_6/P_1$ is Ulrich. 
Then $a=0$ by Proposition \ref{coefficient of weight} and 
$c=0$ by Proposition \ref{subminimum} 
because the Lie algebra $\mathfrak e_6$ has a gradation of depth 1 associated to $\alpha_1$. 
By straightforward computations, we get the set of singular values 




$ \Sing(\omega)=\{
1,
2,
d + 3,
b + d + 4,
d + e + 4,
b + d + e + 5,
d + e + f + 5, 
b + d + e + f + 6,
b + 2d + e + 6,
b + 2d + e + f + 7,
b + 2d + e + 7,
b + 2d + e + f + 8,
b + 2d + 2e + f + 8,
b + 2d + 2e + f + 9,
b + 3d + 2e + f + 10,
2b + 3d + 2e + f + 11
\}. $ 

Because $\Sing(\omega) = \{1, 2, \cdots, 16\}$ by Lemma \ref{Ulrich criterion}, 
we deduce that $d + 3 = 3$, $b + 3d + 2e + f + 10=15$ and $2b + 3d + 2e + f + 11=16$, which imply $d=0$, $b=0$ and $2e + f =5$. 
From $e+4=5$, we get $e=1$, $f=3$ and we can check that $\Sing(\omega_5+3\omega_6)=\{ 1, 2, \cdots, 16\}.$ 
Hence the only irreducible equivariant Ulrich bundle on the Cayley plane $E_6/P_1$ is $\mathcal E_{\omega_5+3\omega_6}$.
\end{proof}

\begin{remark} 
The dimension of the irreducible $\Spin(10)$-module with highest weight $3\varpi_1+\varpi_2$ is $4608=2^9 \times 3^2$ by Weyl's character formula,  
hence the unique irreducible equivariant Ulrich bundle $\mathcal E_{\omega_5+3\omega_6}$ on the Cayley plane $E_6/P_1$ has rank 4,608.
\end{remark}

\subsection{Homogeneous variety $E_6/P_2$}
The rational homogeneous variety $E_6/P_2 \subset \mathbb P(V_{E_6}(\omega_2))=\mathbb P^{77}$ has dimension 21 and Fano index 11. 
Note that $E_6/P_2$ is the adjoint variety of $E_6$ 
since the fundamental $E_6$-module $V_{E_6}(\omega_2)$ is the adjoint representation $\mathfrak e_6$.

\begin{proposition}\label{E_6/P_2}
There are no irreducible equivariant Ulrich bundles on $E_6/P_2$.
\end{proposition}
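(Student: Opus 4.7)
The plan is to apply Fonarev's criterion (Lemma~\ref{Ulrich criterion}) in direct analogy with the proof of Proposition~\ref{E_6/P_1}. Writing a general $L$-dominant weight for $P_2$ as $\omega = a\omega_1 + b\omega_2 + c\omega_3 + d\omega_4 + e\omega_5 + f\omega_6$ with $a, c, d, e, f \in \mathbb Z_{\geq 0}$ and $b \in \mathbb Z$, I would first expand $\omega + \rho - t\omega_2$ in the basis $\{L_1, \ldots, L_6\}$. The singularity condition $(\omega + \rho - t\omega_2, \alpha) = 0$ has a solution in $t$ exactly when the coefficient $n_2$ of $\alpha_2$ in $\alpha = \sum n_i\alpha_i$ is nonzero, and it is given by $t_\alpha = \tfrac{1}{n_2}\bigl(n_1(a+1) + n_2(b+1) + n_3(c+1) + n_4(d+1) + n_5(e+1) + n_6(f+1)\bigr)$. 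Since there are exactly $21$ positive roots of $E_6$ with $n_2 \geq 1$, matching $\dim(E_6/P_2) = 21$, this gives at most $21$ candidate elements of $\Sing(\omega)$.

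Next, using Sage I would tabulate these $21$ expressions to obtain $\Sing(\omega)$ explicitly. A first observation is that the positive roots with $n_2 = 2$ (including the highest root $\alpha_1 + 2\alpha_2 + 2\alpha_3 + 3\alpha_4 + 2\alpha_5 + \alpha_6$) contribute to $\Sing(\omega)$ only when the corresponding numerator is even; otherwise one already has $|\Sing(\omega)| < 21$ and the Ulrich condition fails. In the remaining cases, I would force $\min \Sing(\omega) = 1$ and $\max \Sing(\omega) = 21$ and exploit the nonnegativity of $a, c, d, e, f$ together with the requirement that the consecutive integers $1, 2, 3, \ldots$ all appear to pin down the Dynkin labels. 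The expression from the simple root $\alpha_2$ itself is $b+1$, fixing $b$ once its position in the ordered list is known; the next short-root expressions should similarly constrain $a, c, d$, and so on, echoing how $a + 1, a + c + 2, a + c + d + 3$ cascaded in the proof of Proposition~\ref{E_6/P_1}.

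The main obstacle is combinatorial bookkeeping: with $21$ linear expressions in $6$ unknowns, it is not a priori clear which short chain of equalities yields the sharpest contradiction. The expected resolution is that after matching the smallest values $1, 2, 3, \ldots$ to the short roots through $\alpha_2$, either the expression from the highest root is forced strictly above $21$, or two of the middle expressions must coincide and thus fail to cover some required integer in $\{1, \ldots, 21\}$. A Sage-assisted enumeration of the $21$ expressions then verifies that no integer point in the $L$-dominant cone realizes $\Sing(\omega) = \{1, 2, \ldots, 21\}$, completing the proof.
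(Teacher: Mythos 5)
Your proposal follows essentially the same route as the paper: both apply Fonarev's criterion, compute the $21$ candidate singular values $t_\alpha = \tfrac{1}{n_2}\sum_i n_i(\lambda_i+1)$ attached to the $21$ positive roots with nonzero $\alpha_2$-coefficient (the paper also does this in Sage), extract $b=d=0$ and the parity constraint coming from the unique half-integer expression for the highest root, and then rule out the finitely many remaining label vectors bounded by $\max\Sing(\omega)=21$. The only quibble is cosmetic: there is exactly one positive root of $E_6$ with $n_2=2$ (the highest root), not several, but this does not affect the argument.
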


\begin{proof} 
Let $\mathcal{E}_{\omega}$ be an irreducible equivariant bundle on $E_6/P_2$. 
If $\mathcal{E}_{\omega}$ is Ulrich, then $b=d=0$ by Proposition \ref{coefficient of weight} and Proposition \ref{subminimum} 
since the decomposition of $\mathfrak e_6$ associated to $\alpha_2$ is a contact gradation. 
From a similar computation as above, we can compute the set $\Sing(\omega)$ as follows: 


$ \Sing(\omega)=\{
1,
2,
c + 3,
e + 3,
a + c + 4, 
c + e + 4,
e + f + 4,
c + e + 5,
c + e + f + 5,
a + c + e + 5,
\frac{1}{2}a + c + e + \frac{1}{2}f + \frac{11}{2},
c + e + f + 6,
a + c + e + 6,
a + c + e + f + 6,
c + 2e + f + 7,
a + 2c + e + 7,
a + c + e + f + 7,
a + c + 2e + f + 8,
a + 2c + e + f + 8,
a + 2c + 2e + f + 9,
a + 2c + 2e + f + 10
\} $.

Suppose that there are $a,c,e,f$ such that $\Sing(\omega)=\{ 1, 2, \cdots, 21 \}.$ 
Then 
either $c=0, e \neq 0$ or $c \neq 0, e=0$. 
As $c + e + f + 6 \neq a + c + e + f + 6$, 
we see that $a \neq 0$. 
Because $a + 2c + 2e + f + 10$ 
is the largest integer in $\Sing(\omega)$, we have $a + 2c + 2e + f + 10=21$ so that $a+f$ is odd. 

(i) If $c=0, e \neq 0$, then we have $e=1$, $a+f=9$. 
Then we have either $a=3,f=6$ or $a=7, f=2$. 
However, we can check that the both cases cannot give $\Sing(\omega)=\{ 1, 2, \cdots,21 \}$.

(ii) If $c \neq 0, e=0$, then we have $c=1, f \neq 0$ or $c=2, f=0$. 
If $c=1, f \neq 0$, then we have either $a=2, f=7$ or $a=6, f=3$. 
However, we can check that no case can give $\Sing(\omega)=\{ 1,\cdots,21 \}$ by direct computation.
Likewise, the case $c=2, f=0$ cannot occur since $c + e + 5= c + e + f + 5$. 

Therefore, we see that there are no $a,b,c,d,e,f$ such that $\Sing(\omega)=\{ 1, 2, \cdots,21 \}$, 
so that there are no irreducible equivariant Ulrich bundles on the rational homogeneous variety $E_6/P_2$ by Lemma \ref{Ulrich criterion}. 
\end{proof}

\subsection{Homogeneous variety $E_6/P_3 \cong E_6/P_5$}
The rational homogeneous variety $E_6/P_3$ has dimension 25 and Fano index 9. 
Note that the rational homogeneous variety $E_6/P_5 \subset \mathbb P^{350}$ is projectively equivalent to $E_6/P_3 \subset \mathbb P^{350}$
because the highest weight $E_6$-module $V_{E_6}(\omega_5)$ is dual to $V_{E_6}(\omega_3)$. 

\begin{proposition}\label{E_6/P_3}
There are no irreducible equivariant Ulrich bundles on $E_6/P_3$.
\end{proposition}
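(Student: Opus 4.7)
The plan is to apply Fonarev's criterion (Lemma~\ref{Ulrich criterion}) following the same template as in the proofs of Propositions~\ref{E_6/P_1} and~\ref{E_6/P_2}. I would write a generic $L$-dominant weight on $E_6/P_3$ as $\omega = a\omega_1 + b\omega_2 + c\omega_3 + d\omega_4 + e\omega_5 + f\omega_6$ with $a, b, d, e, f \geq 0$, expand $\omega + \rho - t\omega_3$ in the orthonormal basis $\{L_1, \ldots, L_6\}$, and impose orthogonality with each of the $36$ positive roots of $E_6$. Discarding those roots orthogonal to $\omega_3$, the resulting equations produce the $25$ elements of $\Sing(\omega)$, and Fonarev's criterion requires this set to equal $\{1, 2, \ldots, 25\}$ since $\dim(E_6/P_3)=25$.

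Next, I would extract rigid equalities from the extremal elements of $\Sing(\omega)$. The smallest value is a short linear form in $a,b,c,d,e,f$ that must equal $1$, pinning down several coefficients at once and, in particular, forcing $c \geq 0$; the largest value comes from the highest positive root of $E_6$ paired against $\omega_3$, must equal $25$, and gives a global linear constraint. Matching the next few minimal entries to $2, 3, \ldots$ and the next few maximal entries to $24, 23, \ldots$ should cut the parameter space down to a short finite list of candidate tuples.

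Finally, I would close out each surviving candidate by a direct case analysis. The $16$ positive roots of the form $\tfrac{1}{2}\bigl(\sum (-1)^{n(i)} L_i + \sqrt{3}L_6\bigr)$ contribute half-integer expressions in $t$ that must nevertheless be integers, imposing parity constraints on sums of the coefficients exactly as in the argument for $E_6/P_2$; combined with the extremal equalities these should force each remaining tuple to produce either a repeated value or a gap in $\{1,\ldots,25\}$. The main obstacle I anticipate is combinatorial bulk: with six free parameters and $25$ target values, the list of tuples passing the initial extremal constraints is noticeably larger than in the $E_6/P_1$ case, so the practical difficulty lies in organizing the case split so that each branch collapses quickly, using the Sage implementation described in Section~2 to verify $\Sing(\omega)$ for the handful of remaining tuples.
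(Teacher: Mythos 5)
Your proposal follows essentially the same route as the paper: compute the $25$ elements of $\Sing(\omega)$ via Fonarev's criterion, force $c=0$ (and later $a=0$) from the smallest attainable values, and exploit the half-integer entries of $\Sing(\omega)$ to get parity constraints. The only real difference is that the paper never needs to enumerate a finite list of candidate tuples: once $c=0$, the integrality of the entries $\tfrac{1}{2}a+\tfrac{1}{2}b+c+\tfrac{3}{2}d+e+\tfrac{1}{2}f+5$ through $\tfrac{1}{2}a+\tfrac{1}{2}b+c+d+\tfrac{1}{2}e+\tfrac{7}{2}$ forces $d$ and $f$ to be odd, and then (with $a=0$) the entry $\tfrac{1}{2}a+b+c+\tfrac{3}{2}d+e+\tfrac{1}{2}f+\tfrac{11}{2}$ can never be an integer, so the contradiction is immediate and no case-by-case Sage check of surviving tuples is required.
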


\begin{proof}
Let $\mathcal{E}_{\omega}$ be an irreducible equivariant bundle on $E_6/P_3$. 
If $\mathcal{E}_{\omega}$ is Ulrich, then $c=0$ by Proposition \ref{coefficient of weight} and either $a=0$ or $d=0$ by Proposition \ref{subminimum}. 
Considering the five positive roots in $\Phi_2=\{ 
\alpha_1 + \alpha_2 + 2\alpha_3 + 2\alpha_4 + \alpha_5, 
\alpha_1 + \alpha_2 + 2\alpha_3 + 2\alpha_4 + \alpha_5 + \alpha_6, 
\alpha_1 + \alpha_2 + 2\alpha_3 + 2\alpha_4 + 2\alpha_5 + \alpha_6, 
\alpha_1 + \alpha_2 + 2\alpha_3 + 3\alpha_4 + 2\alpha_5 + \alpha_6, 
\alpha_1 + 2\alpha_2 + 2\alpha_3 + 3\alpha_4 + 2\alpha_5 + \alpha_6\}$, 
we check that 
$\{ 
\frac{1}{2}a + \frac{1}{2}b + d + \frac{1}{2}e + \frac{7}{2}, 
\frac{1}{2}a + \frac{1}{2}b + d + \frac{1}{2}e + \frac{1}{2}f + 4,
\frac{1}{2}a + \frac{1}{2}b + d + e + \frac{1}{2}f + \frac{9}{2},
\frac{1}{2}a + \frac{1}{2}b + \frac{3}{2}d + e + \frac{1}{2}f + 5,
\frac{1}{2}a + b + c + \frac{3}{2}d + e + \frac{1}{2}f + \frac{11}{2}
\} \subset \Sing(\omega)$.
%
%
%
On the other hand, since  
$
\frac{1}{2}a + \frac{1}{2}b + d + e + \frac{1}{2}f + \frac{9}{2},
\frac{1}{2}a + \frac{1}{2}b + \frac{3}{2}d + e + \frac{1}{2}f + 5$
are integers, we see that $d$ 
is odd. 
Hence $a=0$. 
Similarly, $f$ is an odd integer since 
$\frac{1}{2}a + \frac{1}{2}b + d + \frac{1}{2}e + \frac{1}{2}f + 4,
\frac{1}{2}a + \frac{1}{2}b + d + \frac{1}{2}e + \frac{7}{2}$ are integers. 
However, $\frac{1}{2}a + b + c + \frac{3}{2}d + e + \frac{1}{2}f + \frac{11}{2}$ cannot be an integer.
Therefore, 
there are no $a,b,c,d,e,f$ such that $\Sing(\omega)=\{ 1, 2, \cdots,25 \}.$
\end{proof}

\subsection{Homogeneous variety $E_6/P_4$}
The rational homogeneous variety $E_6/P_4 \subset \mathbb P^{2924}$ has dimension 29 and Fano index 7. 

\begin{proposition}
There are no irreducible equivariant Ulrich bundles on 
$E_6/P_4$.
\end{proposition}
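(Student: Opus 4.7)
The plan is to invoke Fonarev's criterion (Lemma \ref{Ulrich criterion}) in exactly the same way as in Propositions \ref{E_6/P_1}, \ref{E_6/P_2}, and \ref{E_6/P_3}. Write a candidate weight as $\omega = a\omega_1+b\omega_2+c\omega_3+d\omega_4+e\omega_5+f\omega_6$ with $a,b,c,e,f\geq 0$ (the $L$-dominance condition for $P_4$), and compute $\omega+\rho-t\omega_4$ in the orthonormal basis $\{L_1,\dots,L_6\}$. Since $\dim(E_6/P_4)=29$, exactly $29$ of the $36$ positive roots of $E_6$ pair nontrivially with $\omega_4$ (namely those whose expansion in simple roots has nonzero coefficient on $\alpha_4$). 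For each such root $\alpha$, solve the single linear equation $(\omega+\rho-t\omega_4,\alpha)=0$ for $t$. Using Sage, as the authors do elsewhere, this produces the explicit set $\Sing(\omega)$ of $29$ affine-linear expressions in $a,b,c,d,e,f$.

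Next, suppose for contradiction that $\Sing(\omega)=\{1,2,\dots,29\}$. Just as in the proofs of Propositions \ref{E_6/P_2} and \ref{E_6/P_3}, I expect roughly half of these $29$ expressions to carry genuine half-integer coefficients: they come from the $16$ ``spin-type'' roots $\tfrac{1}{2}(\sum_{i=1}^{5}(-1)^{n(i)}L_i+\sqrt{3}L_6)$, whose contributions to $t$ inherit the $\tfrac{1}{2}$ factors of $\omega_1,\omega_3,\omega_5,\omega_6$. Imposing that every such expression is an integer in $\{1,\dots,29\}$ yields a system of parity constraints on $a,c,e,f$. I would then identify the minimal expressions in $\Sing(\omega)$ (those with the fewest positive-coefficient variables and the smallest constant term), set them equal to $1,2,3,\dots$, and simultaneously match the maximal expressions with $29,28,27,\dots$. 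As in the earlier propositions, this should collapse $a,b,c,d,e,f$ to a short list of candidate tuples.

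To close each surviving branch, I look for a forced collision: two distinct entries of $\Sing(\omega)$ whose difference is identically zero once the forced values of the variables are inserted, contradicting the fact that $\{1,\dots,29\}$ has no repetitions. Concretely, the ``inner'' roots near the branch node $\alpha_4$ produce expressions of the shape $b+c+d+e+\text{const}$, $b+c+2d+e+\text{const}$, $c+d+e+f+\text{const}$, etc., which tend to coincide whenever several variables are forced to vanish. This is exactly the mechanism used to rule out the subcases $c\neq 0, e=0$ and $c=2, f=0$ in the proof of Proposition \ref{E_6/P_2}, and I expect the same coincidence-hunting to finish the argument here.

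The main obstacle, compared with $E_6/P_1$, $E_6/P_2$, $E_6/P_3$, is that $\alpha_4$ is the trivalent node of the $E_6$ diagram, so the Levi factor is the largest among maximal parabolics of $E_6$ and the dimension $29$ is the largest as well. Consequently $\omega$ has more genuine freedom, the parity conditions kill fewer variables on the first pass, and the case split after pinning down the extremal singular values will be longer and more delicate than in the previous three propositions. The hard part is therefore neither the root-by-root computation of $\Sing(\omega)$ (routine, but long and best delegated to Sage) nor the invocation of Fonarev's criterion, but the combinatorial bookkeeping needed to verify that \emph{every} surviving tuple $(a,b,c,d,e,f)$ admits an explicit collision or integrality obstruction in its computed $\Sing(\omega)$.
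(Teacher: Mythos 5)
Your strategy is exactly the paper's: compute $\Sing(\omega)$ via Fonarev's criterion from the $29$ positive roots with nonzero $\alpha_4$-coefficient, impose $\Sing(\omega)=\{1,\dots,29\}$, and derive a contradiction from integrality and coincidence constraints. But as written the proposal is a plan, not a proof: every decisive step is phrased as ``I expect'' or ``I would then,'' and you never exhibit the actual obstruction. The statement is only established once some specific incompatibility among the $29$ expressions is verified, and you have not done that.

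For the record, the contradiction the paper finds is short and worth comparing against your expectations. From $d+1=1$ one gets $d=0$; the pair $\frac{1}{2}b+\frac{1}{2}c+d+\frac{1}{2}e+\frac{1}{2}f+3$ and $\frac{1}{2}b+\frac{1}{2}c+d+e+\frac{1}{2}f+\frac{7}{2}$ forces $e$ odd; the pair $\frac{1}{3}a+\frac{2}{3}b+\frac{2}{3}c+d+\frac{2}{3}e+\frac{1}{3}f+\frac{11}{3}$ and $\frac{1}{3}a+\frac{1}{3}b+\frac{2}{3}c+d+\frac{2}{3}e+\frac{1}{3}f+\frac{10}{3}$ forces $b\equiv 2 \pmod 3$; these lower bounds leave $c+d+2$ as the only expression that can equal $2$, so $c=0$; and then $\frac{1}{2}a+\frac{1}{2}b+c+d+e+\frac{1}{2}f+\frac{9}{2}$ and $\frac{1}{2}a+\frac{1}{2}b+\frac{1}{2}c+d+e+\frac{1}{2}f+4$ differ by exactly $\frac{1}{2}$, so they cannot both be integers. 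Note that the closing step is an integrality clash, not the ``forced collision'' of two equal entries that you lean on by analogy with $E_6/P_2$; your mechanism is mentioned only in passing at the end of your write-up. To turn the proposal into a proof you must actually produce the list of $29$ expressions and exhibit a chain of deductions of this kind; until then the argument for $E_6/P_4$ has not been given.
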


\begin{proof}
Let $\mathcal{E}_{\omega}$ be an irreducible equivariant bundle on $E_6/P_4$. 
If $\mathcal{E}_{\omega}$ is Ulrich, then $d=0$ by Proposition \ref{coefficient of weight} and at least one of $b, c, e$ is zero 
by Proposition \ref{subminimum}. 
Considering the two positive roots in $\Phi_3=\{ 
\alpha_1 + \alpha_2 + 2\alpha_3 + 3\alpha_4 + 2\alpha_5 + \alpha_6, 
\alpha_1 + 2\alpha_2 + 2\alpha_3 + 3\alpha_4 + 2\alpha_5 + \alpha_6\}$, 
we check that 
$\{ 
\frac{1}{3}a + \frac{1}{3}b + \frac{2}{3}c + \frac{2}{3}e + \frac{1}{3}f + \frac{10}{3},
\frac{1}{3}a + \frac{2}{3}b + \frac{2}{3}c + \frac{2}{3}e + \frac{1}{3}f + \frac{11}{3}
\} \subset \Sing(\omega)$.
Since these are integers, $b$ is an integer such that $b \equiv 2 \mod 3$. 
Similarly, considering the nine positive roots in $\Phi_2$, 
we check that 
$\{ 
\frac{1}{2}b + \frac{1}{2}c + \frac{1}{2}e + \frac{1}{2}f + 3,
\frac{1}{2}b + \frac{1}{2}c + e + \frac{1}{2}f + \frac{7}{2}, 
\frac{1}{2}a + \frac{1}{2}b + \frac{1}{2}c + d + e + \frac{1}{2}f + 4, 
\frac{1}{2}a + \frac{1}{2}b + c + d + e + \frac{1}{2}f + \frac{9}{2}
\} \subset \Sing(\omega)$.
Because
$\frac{1}{2}b + \frac{1}{2}c + \frac{1}{2}e + \frac{1}{2}f + 3$ and 
$\frac{1}{2}b + \frac{1}{2}c + e + \frac{1}{2}f + \frac{7}{2}$ are integers, 
we see that $e$ is an odd integer. 
Thus we conclude that $c=0$. 
However, if $c=0$ then 
$\frac{1}{2}a + \frac{1}{2}b + \frac{1}{2}c + e + \frac{1}{2}f + 4$ and $\frac{1}{2}a + \frac{1}{2}b + c + e + \frac{1}{2}f + \frac{9}{2}$ cannot be integers simultaneously. 
Therefore,
there are no $a,b,c,d,e,f$ such that $\Sing(\omega)=\{ 1, 2, \cdots, 29 \}.$
\end{proof}

\section{$F_4$-homogeneous varieties}

We recall some facts about the simple Lie group $F_4$. 
When we choose an orthonormal basis $\{L_1, L_2, L_3, L_4\}$ for the dual Cartan subalgebra $\mathfrak t^*$, 
the simple roots can be taken as follows: 
$\alpha_1 = L_2 - L_3$, $\alpha_2 = L_3 - L_4$, $\alpha_3 = L_4$, $\alpha_4 = \frac{1}{2} (L_1-L_2-L_3-L_4)$.
The simple Lie group $F_4$ can be identified with a subgroup of $E_6$. 
A symmetry of the Dynkin diagram of $E_6$ induces an automorphism $\sigma \colon E_6 \to E_6$ of order 2. 
Then we know that the subgroup $E_6^{\sigma}$ of $\sigma$-stable elements of $E_6$ is isomorphic to $F_4$; 
see Proposition 13.31 of \cite{Carter}.

\begin{center}
\begin{picture} (150, 40)
 \put(10,20){$(F_4)$}

 \put(50,20){\circle{5}} 
 \put(80,20){\circle{5}} 
 \put(110,20){\circle{5}} 
 \put(140,20){\circle{5}} 

 \put(52.5,20){\line(1,0){25}} 
 \put(82.5,19){\line(1,0){25}} 
 \put(82.5,21){\line(1,0){25}} 
 \put(112.5,20){\line(1,0){25.5}} 

 \put(93 ,17.5){$>$}

 \put(45,10){$\alpha_1$} 
 \put(75,10){$\alpha_2$} 
 \put(106,10){$\alpha_3$} 
 \put(138,10){$\alpha_4$} 
 \end{picture}
\end{center}
The complex Lie group $F_4$ has 24 positive roots; 
see e.g. page 332 in \cite{FH}:
$$\Phi^+=\{ L_i\}_{1\leq i \leq4} \cup \{ L_i+L_j\}_{1\leq i<j\leq4} \cup \{ L_i-L_j\}_{1\leq i<j\leq4} \cup \Big \{ \frac{1}{2} (L_1 \pm L_2 \pm L_3 \pm L_4) \Big\}.$$ 
The highest root of $F_4$ is $2\alpha_1 + 3\alpha_2 + 4\alpha_3 + 2\alpha_4$.

The fundamental weights corresponding to the system of simple roots are 
$$\omega_1 = L_1 + L_2, \quad \omega_2 = 2L_1 + L_2 + L_3, \quad \omega_3 = \frac{1}{2} (3L_1+L_2+L_3+L_4), \quad \omega_4 = L_1.$$
Thus we know that $\rho = \omega_1 + \omega_2 + \omega_3 + \omega_4 = \frac{11}{2}L_1 + \frac{5}{2}L_2 + \frac{3}{2}L_3 + \frac{1}{2}L_4$.

\subsection{Homogeneous variety $F_4/P_4$}
For the maximal parabolic subgroup $P_4$ associated to a simple root $\alpha_4$ of $F_4$, 
the rational homogeneous variety $F_4/P_4$ is a general hyperplane section of the Cayley plane $E_6/P_1 \subset \mathbb P^{26}$; 
see Section 6.3 of \cite{LM}.
This geometric description of $F_4/P_4$ follows from the fact that 
the 27-dimensional fundamenatal $E_6$-module $V_{E_6}(\omega_1)$ may be regarded as an $F_4$-module using the embedding of $F_4$ in $E_6$
and it decomposes as $V_{F_4}(\omega_4) \oplus V_{F_4}(0)$ (Proposition 13.32 of \cite{Carter}). 
Note that $V_{F_4}(\omega_4)$ can be identified with the taceless subspace of the Jordan algebra $\mathcal J_3(\mathbb O)$ explained in Subsection 5.1. 
So $F_4/P_4 \subset \mathbb P^{25}$ is a smooth projective variety of dimension 15 and Fano index 11. 


\begin{proposition}
There are no irreducible equivariant Ulrich bundles on 
$F_4/P_4$.
\end{proposition}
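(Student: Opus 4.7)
The plan is to apply Fonarev's criterion (Lemma \ref{Ulrich criterion}) and show that no $L$-dominant weight $\omega = a\omega_1 + b\omega_2 + c\omega_3 + d\omega_4$ with $a, b, c \geq 0$ yields $\Sing(\omega) = \{1, 2, \ldots, 15\}$. First I would express $\omega + \rho - t\omega_4$ in the orthonormal basis $\{L_1, L_2, L_3, L_4\}$; since $\omega_4 = L_1$, only the $L_1$-coefficient depends on $t$. Among the 24 positive roots of $F_4$, only the 15 whose $L_1$-coefficient is nonzero produce an orthogonality condition solvable in $t$, namely $L_1$, the six $L_1 \pm L_j$ for $j = 2, 3, 4$, and the eight half-sums $\tfrac{1}{2}(L_1 \pm L_2 \pm L_3 \pm L_4)$. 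The remaining nine positive roots pair positively with $\omega + \rho$ for every $L$-dominant $\omega$ and contribute nothing to $\Sing(\omega)$. This matches $\dim F_4/P_4 = 15$.

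Next I would solve each of the 15 orthogonality equations for $t$ in terms of $(a, b, c, d)$. For example, $\alpha_4 = \tfrac{1}{2}(L_1 - L_2 - L_3 - L_4)$ yields $t = d + 1$; $\tfrac{1}{2}(L_1 - L_2 - L_3 + L_4)$ yields $t = c + d + 2$; and at the other extreme $\tfrac{1}{2}(L_1 + L_2 + L_3 + L_4)$ yields $t = 2a + 4b + 3c + d + 10$. All 14 of the resulting formulas from the half-sum roots and from $L_1 \pm L_j$ are integer-valued linear expressions in $(a, b, c, d)$ with nonnegative coefficients on $a, b, c$ and constant term between $1$ and $10$. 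The 15th formula, from the root $L_1$ itself, is $t = a + 2b + \tfrac{3}{2}c + d + \tfrac{11}{2}$; for this to be an integer, $c$ must be odd, in particular $c \geq 1$, else fewer than 15 formulas yield integer singular values and $|\Sing(\omega)| < 15$.

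The decisive step is then short. Since $a, b, c \geq 0$, the value $d + 1$ coming from $\alpha_4$ is the strict minimum among the 15 formulas, so $1 \in \Sing(\omega)$ forces $d = 0$: if $d \leq -1$ then $d + 1 \notin \{1, \ldots, 15\}$, while if $d \geq 1$ then no formula equals $1$. But then I must locate the value $2$ in $\Sing(\omega)$, and inspection of the 15 formulas under the constraints $a, b \geq 0$, $c \geq 1$ odd, $d = 0$ shows that each is either equal to $1$ (from $\alpha_4$) or at least $3$. Hence $2 \notin \Sing(\omega)$, contradicting Lemma \ref{Ulrich criterion}. I expect the main obstacle to be purely organizational, namely tabulating the 15 formulas correctly and verifying the lower bounds, rather than any delicate combinatorial case analysis like those needed for $E_6/P_2$ or $E_6/P_3$.
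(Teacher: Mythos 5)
Your proposal is correct and follows the paper's approach: both apply Fonarev's criterion after computing the same fifteen singular values $t$ coming from the positive roots of $F_4$ with nonzero $L_1$-component, and your sample formulas ($t=d+1$, $t=c+d+2$, $t=a+2b+\tfrac{3}{2}c+d+\tfrac{11}{2}$ from $L_1$, and $t=2a+4b+3c+d+10$) agree with the paper's list. The only divergence is the endgame: the paper matches the three smallest values to $1,2,3$ to force $b=c=d=0$ and then derives $a=\tfrac{5}{2}$ from the largest value, while you extract that $c$ must be odd from integrality of the $L_1$-value and then observe that $2$ cannot occur once $d=0$; both contradictions trace back to the half-integer constant $\tfrac{11}{2}$ and both are valid.
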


\begin{proof}
For $\omega \in \Lambda_L^+$, 
suppose that an irreducible equivariant vector bundle $\mathcal E_{\omega}$ on 
$F_4/P_4$ is Ulrich. 
Then $\Sing(\omega) = \{1, 2, \cdots, 15\}$ by Lemma \ref{Ulrich criterion} and the fourth coefficient $d$ of $\omega$ must be zero by Proposition \ref{coefficient of weight}. 
Considering the positive roots in $ \Phi^+ \setminus \Phi_0$, 
we check that $\{ a + 2b + \frac{3}{2}c + \frac{11}{2}, a + 2b + 2c + 6, 2a + 2b + c + 6, 2a + 4b + 3c + 10 \} \subset \Sing(\omega)$.
As $a + 2b + \frac{3}{2}c + \frac{11}{2}$ is an integer, $c$ is an odd integer. 
Since $2a + 4b + 3c + 10 \leq 15$, $c=1$, $b=0$, and $a=0$ or $1$.  
If $a=0$, then $a + 2b + \frac{3}{2}c + \frac{11}{2} = 7 = 2a + 2b + c + 6$. 
Similarly, if $a=1$, then $a + 2b + 2c + 6 = 9 = 2a + 2b + c + 6$.  
Hence there are no integers $a,b,c,d$ such that $\Sing(\omega)=\{ 1,2, \cdots, 15 \}$.  
\end{proof}

Although the rational homogeneous variety $F_4/P_4$ does not admit an irreducible equivariant Ulrich bundle, 
we can constuct a (non-irreducible) equivariant Ulrich bundle on $F_4/P_4$ 
because $F_4/P_4$ is a general hyperplane section of the Cayley plane $E_6/P_1$. 
This follows from a fact in \cite{ES} and \cite{Be17}: 
if $E$ is an Ulrich bundle on $X$ and $Y$ is a hyperplane section of $X$, then the restriction $E|_{Y}$ to $Y$ is also an Ulrich bundle on $Y$. 

\begin{corollary}
Let $E$ be the irreducible equivariant Ulrich bundle 
on the Cayley plane $E_6/P_1$ in Proposition \ref{E_6/P_1}.
Then the restriction $E|_{F_4/P_4}$ is an ($F_4$-equivariant) Ulrich bundle on 
$F_4/P_4$.
\end{corollary}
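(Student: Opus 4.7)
The plan is to invoke the general fact that the restriction of an Ulrich bundle to a hyperplane section remains Ulrich, and to verify that in our situation this restriction carries a natural $F_4$-equivariant structure. I would begin by recalling the realization used earlier in this section: the embedding $F_4 \hookrightarrow E_6$ yields a decomposition $V_{E_6}(\omega_1) \cong V_{F_4}(\omega_4) \oplus V_{F_4}(0)$, and the trivial summand picks out an $F_4$-invariant hyperplane $H \subset \mathbb P(V_{E_6}(\omega_1)) = \mathbb P^{26}$ whose intersection with the Cayley plane is precisely $F_4/P_4 \subset \mathbb P^{25}$. In particular, the entire picture, including the hyperplane, is $F_4$-equivariant.

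Next, I would deduce the Ulrich property by twisting the ideal sheaf sequence
\begin{equation*}
0 \to \mathcal O_{E_6/P_1}(-1) \to \mathcal O_{E_6/P_1} \to \mathcal O_{F_4/P_4} \to 0
\end{equation*}
by $E(-k)$, where $E = \mathcal E_{\omega_5+3\omega_6}$, to obtain
\begin{equation*}
0 \to E(-k-1) \to E(-k) \to E|_{F_4/P_4}(-k) \to 0.
\end{equation*}
By Proposition \ref{E_6/P_1} the bundle $E$ is Ulrich on the 16-dimensional Cayley plane, so $H^i(E_6/P_1, E(-k)) = 0$ for every $i$ and every $1 \leq k \leq 16$. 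For each $1 \leq k \leq 15 = \dim(F_4/P_4)$, both $H^i(E_6/P_1, E(-k))$ and $H^{i+1}(E_6/P_1, E(-k-1))$ vanish, so the long exact sequence in cohomology forces $H^i(F_4/P_4, E|_{F_4/P_4}(-k)) = 0$ for every $i$. This is exactly the vanishing demanded by the definition of an Ulrich bundle on $F_4/P_4$.

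Finally, equivariance is automatic: since $E$ carries an $E_6$-equivariant structure and the inclusion $F_4 \hookrightarrow E_6$ preserves both $F_4/P_4 \subset E_6/P_1$ and the hyperplane $H$, the restriction $E|_{F_4/P_4}$ inherits a lifted $F_4$-action by bundle automorphisms. The only substantive step is the hyperplane-section identification, which is already in place; everything else is a formal consequence of the long exact sequence, and no further Borel--Weil--Bott computation is required at this stage. Non-irreducibility as an $F_4$-equivariant bundle then follows from the classification in the preceding proposition, which shows that $F_4/P_4$ admits no irreducible equivariant Ulrich bundle.
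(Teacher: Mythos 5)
Your proof is correct and follows the same route as the paper: the paper simply cites the general fact (from Eisenbud--Schreyer and Beauville) that the restriction of an Ulrich bundle to a hyperplane section is again Ulrich, together with the identification of $F_4/P_4$ as a hyperplane section of the Cayley plane. You merely make the cited lemma self-contained by writing out its standard proof via the ideal-sheaf sequence, and your equivariance remark (the hyperplane is $F_4$-invariant by the decomposition $V_{E_6}(\omega_1)\cong V_{F_4}(\omega_4)\oplus V_{F_4}(0)$) is a correct and welcome addition that the paper leaves implicit.
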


\subsection{Homogeneous variety $F_4/P_3$}
For the maximal parabolic subgroup $P_3$ associated to a simple root $\alpha_3$ of $F_4$, 
the rational homogeneous variety $F_4/P_3 \subset \mathbb P^{272}$ is 
the closed $F_4$-orbit in the space of lines on the rational homogeneous variety $F_4/P_4 \subset \mathbb P^{25}$
(cf. Section 6.4 of 
\cite{LM}), 
and has dimension 20 and Fano index 7.








\begin{proposition}
There are no irreducible equivariant Ulrich bundles on 
$F_4/P_3$.
\end{proposition}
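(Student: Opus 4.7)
The plan is to apply Fonarev's criterion (Lemma~\ref{Ulrich criterion}), which requires $\Sing(\omega) = \{1, 2, \ldots, 20\}$ since $\dim(F_4/P_3) = 20$, and to show that no $L$-dominant weight $\omega = a\omega_1 + b\omega_2 + c\omega_3 + d\omega_4$ with $a, b, d \geq 0$ achieves this.

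First I would enumerate the $20$ positive roots of $F_4$ lying in the nilradical of $P_3$ (those with nonzero $\alpha_3$-coefficient): the nine long roots $L_1 \pm L_j$ for $j \in \{2,3,4\}$ together with $L_2 + L_3$, $L_2 + L_4$, $L_3 + L_4$; the four short roots $L_1, L_2, L_3, L_4$; and the seven half-integer short roots $\tfrac{1}{2}(L_1 \pm L_2 \pm L_3 \pm L_4)$ other than $\alpha_4$ itself. For each such $\beta$ the singularity condition $(\omega + \rho - t\omega_3, \beta) = 0$ gives $t_\beta = (\omega + \rho, \beta)/(\omega_3, \beta)$, producing twenty affine-linear expressions in $a, b, c, d$ that constitute $\Sing(\omega)$.

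Next I would extract integrality constraints. The three long roots $L_1 + L_j$ and the three short roots $\tfrac{1}{2}(L_1 \pm L_2 \pm L_3 \pm L_4)$ with exactly one minus sign contribute a priori half-integer $t$-values; demanding that each lie in $\mathbb{Z}$ forces $a \equiv b \equiv d \equiv 1 \pmod{2}$. The two roots $L_1$ and $\tfrac{1}{2}(L_1 + L_2 + L_3 + L_4)$ have $(\omega_3, \beta) = \tfrac{3}{2}$ and produce two mod-$3$ congruences whose difference simplifies to $d \equiv 2 \pmod{3}$; combined with $d$ odd and $d \geq 0$ this gives $d \geq 5$.

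Finally, the smallest element of $\Sing(\omega)$ is the value $c + 1$ coming from $\beta = L_4$, so matching the required minimum $1$ forces $c = 0$. With $c = 0$ and $a, b \geq 1$, $d \geq 5$, a short inspection of the other nineteen expressions shows all of them are at least $3$: in particular $\beta = L_3 + L_4$ gives $b + c + 2 = b + 2 \geq 3$. Hence $2 \notin \Sing(\omega)$, contradicting $\Sing(\omega) = \{1, 2, \ldots, 20\}$. The main obstacle is simply the bookkeeping: twenty inhomogeneous expressions in four unknowns with both mod-$2$ and mod-$3$ integrality conditions to track simultaneously. Once these constraints are pinned down, the contradiction from the missing value $2$ falls out immediately.
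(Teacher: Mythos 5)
Your argument is correct and follows the paper's overall strategy: identify the twenty positive roots of $F_4$ outside the Levi of $P_3$, compute the twenty affine-linear elements of $\Sing(\omega)$, and apply Fonarev's criterion. Your list of roots and the resulting expressions match the paper's (I spot-checked several, e.g.\ $\beta = L_4$ giving $c+1$, $\beta = L_3+L_4$ giving $b+c+2$, and $\beta = \tfrac{1}{2}(L_1+L_2+L_3+L_4)$ giving $\tfrac{2}{3}a+\tfrac{4}{3}b+c+\tfrac{1}{3}d+\tfrac{10}{3}$), and your derivations that $c=0$, that $a,b,d$ are all odd, and that $d \equiv 2 \pmod 3$ (hence $d \geq 5$) are all sound. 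Where you diverge is the endgame: the paper uses only two entries beyond $c+1$ --- the maximum $2a+2b+c+d+6=20$ forces $d$ even, while integrality of $b+c+\tfrac{1}{2}d+\tfrac{5}{2}$ forces $d$ odd, an immediate parity contradiction --- whereas you accumulate the full mod-$2$ and mod-$3$ integrality constraints to push $d \geq 5$ and then observe that the value $2$ is never attained since every remaining entry is at least $3$. Both contradictions are valid; the paper's is shorter and needs less bookkeeping, while yours exploits more of the arithmetic structure but requires verifying a lower bound on all nineteen remaining expressions.
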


\begin{proof}
Suppose that an irreducible equivariant vector bundle $\mathcal E_{\omega}$ on $F_4/P_3$ is Ulrich. 
Then $\Sing(\omega) = \{1, 2, \cdots, 20\}$ by Lemma \ref{Ulrich criterion} and the third coefficient $c$ of $\omega$ is zero by Proposition \ref{coefficient of weight}. 
Considering the positive roots in $ \Phi^+ \setminus \Phi_0$, 
we check that $\{ \frac{1}{2}a + b + \frac{1}{2} d + 3, \frac{1}{2}a + \frac{3}{2}b + \frac{1}{2} d + \frac{7}{2} \} \subset \Sing(\omega)$ by  straightforward computations.
Since the difference $b+ \frac{1}{2}$ between two elements cannot be an integer, there are no integers $a,b,c,d$ such that $\Sing(\omega)=\{ 1,2, \cdots, 20 \}$.
%
\end{proof}
%

\subsection{Homogeneous variety $F_4/P_2$}
The rational homogeneous variety $F_4/P_2 \subset \mathbb P^{1273}$ has dimension 20 and Fano index 5. 

\begin{proposition}
There are no irreducible equivariant Ulrich bundles on 
$F_4/P_2$.
\end{proposition}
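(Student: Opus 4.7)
The plan is to apply Fonarev's criterion (Lemma~\ref{Ulrich criterion}): an irreducible equivariant bundle $\mathcal E_\omega$ on $F_4/P_2$ is Ulrich if and only if $\Sing(\omega) = \{1, 2, \ldots, 20\}$, since $\dim(F_4/P_2) = 20$. Following the pattern established in the preceding propositions, I would write an $L$-dominant weight as $\omega = a\omega_1 + b\omega_2 + c\omega_3 + d\omega_4$ with $a, c, d \geq 0$ (the coefficient $b$ of $\omega_2$ is unconstrained because $P_2$ is the parabolic associated with $\alpha_2$), and then compute
$$\omega + \rho - t\omega_2 = \bigl(a+2b+\tfrac{3}{2}c+d+\tfrac{11}{2}-2t\bigr)L_1 + \bigl(a+b+\tfrac{1}{2}c+\tfrac{5}{2}-t\bigr)L_2 + \bigl(b+\tfrac{1}{2}c+\tfrac{3}{2}-t\bigr)L_3 + \bigl(\tfrac{1}{2}c+\tfrac{1}{2}\bigr)L_4.$$

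Next, for each of the 24 positive roots of $F_4$, I would determine the unique value of $t$ (if any) for which the above weight becomes orthogonal to that root. Several positive roots contribute no value of $t$: for instance, $L_4$ imposes $c = -1$, and $L_2 - L_3$ imposes $a = -1$, both incompatible with non-negativity, so they are discarded. The remaining positive roots should produce exactly 20 candidate singular values, which I would collect into $\Sing(\omega)$.

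The analysis of $\Sing(\omega)$ then follows the template from Propositions for $F_4/P_4$ and $F_4/P_3$: the smallest elements (pinned to the value $1$ and $2$) together with the non-negativity constraints should force $c=0$ and small values of $b$ or $d$, while the largest element being $20$ forces a linear constraint at the top. The critical step will be to exploit integrality: several singular values inherit the fractions $\tfrac{1}{2}, \tfrac{1}{3}, \tfrac{1}{4}$ from the short roots $L_i$ and the ``spin-type'' roots $\tfrac{1}{2}(L_1 \pm L_2 \pm L_3 \pm L_4)$, and requiring each to lie in $\{1,\ldots,20\}$ imposes congruence conditions on $a, b, c, d$ modulo $2$, $3$, and $4$. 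Combining these congruences with the forced values at the extremes is expected to yield a direct contradiction, or else a forced coincidence of two distinct elements of $\Sing(\omega)$ that prevents the set from having cardinality $20$.

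The main obstacle will be bookkeeping. Because $F_4/P_2$ has dimension $20$, the set $\Sing(\omega)$ is the longest among the $F_4$-homogeneous cases treated so far, with many entries of similar linear form. Deciding which three or four equations produce the quickest contradiction requires sifting through the list. As the authors do elsewhere in the paper, I would rely on Sage to enumerate $\Sing(\omega)$ symbolically and then identify the crucial forcing; experience from the $F_4/P_3$ and $F_4/P_4$ proofs suggests the contradiction will ultimately come from one singular value being forced to a half-integer while another neighbouring singular value must be an integer in $\{1,\ldots,20\}$.
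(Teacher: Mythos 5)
Your setup is exactly the paper's: Fonarev's criterion with $d=20$, the weight $\omega+\rho-t\omega_2$ you display is correct, and your accounting of which positive roots contribute is right (the four roots orthogonal to $\omega_2$, namely $L_4$, $L_2-L_3$ and $\tfrac{1}{2}(L_1-L_2-L_3\pm L_4)$, give no value of $t$, leaving exactly $20$ candidate singular values). However, the proposal stops precisely where the proof has to begin: you never exhibit the concrete incompatibility, only the expectation that one exists. ``Combining these congruences \ldots is expected to yield a direct contradiction'' is a plan, not an argument, and for a nonexistence statement of this kind the entire content lies in actually producing the clash. As written, the proof is incomplete.

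For the record, the forcing goes as follows (and differs in detail from your guess that $c=0$ is pinned down first). The smallest element of $\Sing(\omega)$ is $b+1$, giving $b=0$. The two entries $\tfrac{2}{3}a + b + \tfrac{2}{3}c + \tfrac{1}{3}d + \tfrac{8}{3}$ and $\tfrac{1}{3}a + b + \tfrac{2}{3}c + \tfrac{1}{3}d + \tfrac{7}{3}$ differ by $\tfrac{a+1}{3}$, so both being integers forces $a\equiv 2 \pmod 3$; in particular $a\geq 2$, which rules out every candidate for the value $2$ except $b+\tfrac{1}{2}c+\tfrac{3}{2}$, whence $c=1$ (not $c=0$: the entry $b+\tfrac{1}{2}c+\tfrac{3}{2}$ already forces $c$ odd). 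Finally, the two entries $\tfrac{1}{2}a + b + \tfrac{3}{4}c + \tfrac{1}{4}d + \tfrac{5}{2}$ and $\tfrac{1}{2}a + b + \tfrac{1}{2}c + \tfrac{1}{4}d + \tfrac{9}{4}$ differ by $\tfrac{1}{4}(c+1)=\tfrac{1}{2}$, so they cannot both be integers --- the contradiction. Your instinct about where the contradiction comes from (a forced half-integer among the spin-root contributions) was right, but the proof needs these specific entries named and the arithmetic carried out.
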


\begin{proof}
Suppose that an irreducible equivariant vector bundle $\mathcal E_{\omega}$ on $F_4/P_2$ is Ulrich. 
Then $\Sing(\omega) = \{1, 2, \cdots, 20\}$ by Lemma \ref{Ulrich criterion} and the second coefficient $b$ of $\omega$ is zero by Proposition \ref{coefficient of weight}. 
By straightforward computations, 
we check that $\{ \frac{2}{3}a + \frac{2}{3}c + \frac{1}{3}d + \frac{8}{3}, \frac{1}{3}a + \frac{2}{3}c + \frac{1}{3}d + \frac{7}{3}, 
\frac{1}{2}a + \frac{3}{4}c + \frac{1}{2}d + \frac{11}{4}, \frac{1}{2}a + \frac{3}{4}c + \frac{1}{4}d + \frac{5}{2}, 
\frac{1}{2}a + \frac{1}{2}c + \frac{1}{2}d + \frac{5}{2}, a + c + d + 4 \} \subset \Sing(\omega)$.
Because $\frac{2}{3}a + \frac{2}{3}c + \frac{1}{3}d + \frac{8}{3}$ and $\frac{1}{3}a + \frac{2}{3}c + \frac{1}{3}d + \frac{7}{3}$ are integers, 
we see that $a$ is an integer such that $a \equiv 2 \mod 3$. 
Similarly, $c$ and $d$ are integers such that $c, d \equiv 3 \mod 4$. 
Then $a$ must be odd since $\frac{1}{2}a + \frac{1}{2}c + \frac{1}{2}d + \frac{5}{2}$ is an integer. 
Thus $a=5$ from $a + c + d + 4 \leq 20$.
If $a=5$, then we get $c=d=3$ since $\frac{2}{3}c + \frac{1}{3}d $ is an integer. 
However, we conclude that $\frac{1}{2}a + \frac{3}{4}c + \frac{1}{2}d + \frac{11}{4} = 9 = \frac{2}{3}a + \frac{2}{3}c + \frac{1}{3}d + \frac{8}{3}$. 
Therefore, there are no integers $a,b,c,d$ such that $\Sing(\omega)=\{ 1, 2, \cdots, 20 \}$.
\end{proof}

\subsection{Homogeneous variety $F_4/P_1$}
The rational homogeneous variety $F_4/P_1 \subset \mathbb P^{51}$ has dimension 15 and Fano index 8. 
Since the fundamental $F_4$-module $V_{F_4}(\omega_1)$ is the adjoint representation $\mathfrak f_4$, $F_4/P_1$ is the adjoint variety of $F_4$.

\begin{proposition}
There are no irreducible equivariant Ulrich bundles on 
$F_4/P_1$.
\end{proposition}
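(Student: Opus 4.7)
The plan is to apply Fonarev's criterion (Lemma \ref{Ulrich criterion}) in the same spirit as the preceding propositions: I would show that no integral weight $\omega = a\omega_1 + b\omega_2 + c\omega_3 + d\omega_4$ with $b, c, d \geq 0$ satisfies $\Sing(\omega) = \{1, 2, \ldots, 15\}$. First I expand $\omega + \rho - t\omega_1$ in the orthonormal basis $\{L_1, L_2, L_3, L_4\}$ and, for each of the $15 = \dim F_4/P_1$ positive roots of $F_4$ that are not orthogonal to $\omega_1 = L_1 + L_2$, solve the orthogonality condition for $t$. This yields $\Sing(\omega)$ as an explicit list of $15$ rational linear expressions in $(a, b, c, d)$; as in the earlier propositions, I would assemble the list with Sage to avoid arithmetic slips.

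Once $\Sing(\omega)$ is tabulated, I would exploit the extremes of the required set $\{1, \ldots, 15\}$. Because $b, c, d \geq 0$, the expression $a + 1$ arising from the root $L_2 - L_3$ is visibly the unique minimum among the fifteen candidates, so matching it against $1$ forces $a = 0$. After this substitution, direct inspection shows that only the expression $a + b + 2$ arising from $L_2 - L_4$ can take the value $2$, forcing $b = 0$ as well.

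The final step is a parity contradiction, which I expect to drop out cleanly once the bookkeeping above is complete. With $a = b = 0$, the $t$-value $\tfrac{3}{2}b + c + \tfrac{1}{2}d + 4$ coming from the long root $L_1 + L_2$ collapses to $c + \tfrac{1}{2}d + 4$, whose integrality demands $d$ even, while the $t$-value $2b + c + \tfrac{1}{2}d + \tfrac{9}{2}$ coming from the short root $\tfrac{1}{2}(L_1 + L_2 + L_3 - L_4)$ collapses to $c + \tfrac{1}{2}d + \tfrac{9}{2}$, whose integrality demands $d$ odd. Since both expressions must lie in $\{1, \ldots, 15\} \subset \mathbb Z$, no admissible $d$ exists, and the proposition follows. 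The main obstacle is purely mechanical: producing all fifteen expressions correctly for both the long and the short positive roots of $F_4$; once they are in hand, the contradiction is a one-line consequence of comparing two half-integral $t$-values with opposite parity requirements.
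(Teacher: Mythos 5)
Your proposal is correct and follows essentially the same route as the paper: compute the fifteen $t$-values of $\Sing(\omega)$, deduce $a=b=0$ from the smallest entries, and derive an integrality contradiction among the half-integral expressions. The paper phrases the final step slightly differently (it deduces $c,d$ odd from the entries $a+b+\tfrac{1}{2}c+\tfrac{5}{2}$ and $a+2b+c+\tfrac{1}{2}d+\tfrac{9}{2}$ and then observes $a+\tfrac{3}{2}b+c+\tfrac{1}{2}d+4$ fails to be an integer), but your observation that $c+\tfrac{1}{2}d+4$ and $c+\tfrac{1}{2}d+\tfrac{9}{2}$ differ by $\tfrac{1}{2}$ and hence cannot both be integers is the same contradiction.
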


\begin{proof}
Let $\mathcal{E}_{\omega}$ be an irreducible equivariant bundle on $F_4/P_1$. 
If $\mathcal{E}_{\omega}$ is Ulrich, then $a=0$ by Proposition \ref{coefficient of weight} and $b=0$ by Proposition \ref{subminimum}
since the singular value $2$ is attained at the root $\alpha_1 + \alpha_2 \in \Phi_1$, that is, $(a+1)+(b+1) = 2$.
By straightforward computations, 
we check that 
$\{ 
a + b + c + \frac{1}{2}d + \frac{7}{2},
a + \frac{3}{2}b + c + \frac{1}{2}d + 4
\} 
\subset \Sing(\omega)$.
%
%
%
%
As $a + b + c + \frac{1}{2}d + \frac{7}{2}$ is an integer, $d$ is an odd integer.
However, if $d$ is odd, then $a + \frac{3}{2}b + c + \frac{1}{2}d + 4$ cannot be an integer.
Therefore, 
there are no $a,b,c,d$ such that $\Sing(\omega)=\{ 1, 2, \cdots, 15 \}.$
\end{proof}

\section{$E_7$-homogeneous varieties}
We recall some basic facts about the simple Lie group $E_7$. 
When we choose an orthonormal basis $\{L_1, \cdots, L_7\}$ for the dual Cartan subalgebra $\mathfrak t^*$, 
the simple roots can be taken as follows: \\
$\alpha_1 = \frac{1}{2} (L_1-L_2-L_3-L_4-L_5-L_6+\sqrt{2}L_7)$,  
$\alpha_2 = L_1 + L_2$, $\alpha_3 = L_2 - L_1$, $\alpha_4 = L_3 - L_2$, 
$\alpha_5 = L_4 - L_3$, $\alpha_6 = L_5 - L_4$, $\alpha_7 = L_6 - L_5$.

\begin{center}
\begin{picture} (250, 55)
 \put(10,20){$(E_7)$}

 \put(50,20){\circle{5}} 
 \put(110,50){\circle{5}} 
 \put(80,20){\circle{5}} 
 \put(110,20){\circle{5}} 
 \put(140,20){\circle{5}} 
 \put(170,20){\circle{5}} 
 \put(200,20){\circle{5}} 

 \put(52.5,20){\line(1,0){25}} 
 \put(82.5,20){\line(1,0){25}} 
 \put(112.5,20){\line(1,0){25}} 
 \put(142.5,20){\line(1,0){25}} 
 \put(110,22.5){\line(0,1){25}} 
 \put(172.5,20){\line(1,0){25}} 

 \put(45,10){$\alpha_1$} 
 \put(116,48){$\alpha_2$} 
 \put(75,10){$\alpha_3$} 
 \put(106,10){$\alpha_4$} 
 \put(138,10){$\alpha_5$} 
 \put(168,10){$\alpha_6$} 
 \put(198,10){$\alpha_7$} 
 \end{picture}
\end{center}
The complex Lie group $E_7$ has 63 positive roots, see e.g. page 333 in \cite{FH}:
$$\Phi^+=\{ L_i+L_j\}_{1 \leq i < j \leq 6} \cup \{ L_i-L_j\}_{1 \leq j < i \leq 6} \cup \{ \sqrt{2} L_7 \}  
\cup \Big \{ \frac{1}{2} (\sum_{i=1}^{6} (-1)^{n(i)}L_i + \sqrt{2} L_7) : \sum_{i=1}^{6} n(i) \mbox{ is odd} \Big \}.$$
The highest root of $E_7$ is $2\alpha_1 + 2\alpha_2 + 3\alpha_3 + 4\alpha_4 + 3\alpha_5 + 2\alpha_6 + \alpha_7$.

The fundamental weights corresponding to the system of simple roots are 
\begin{eqnarray*}
\omega_1 &=& \sqrt{2}L_7, \\
\omega_2 &=& \frac{1}{2}(L_1+L_2+L_3+L_4+L_5+L_6) + \sqrt{2}L_7, \\
\omega_3 &=& \frac{1}{2}(-L_1+L_2+L_3+L_4+L_5+L_6) + \frac{3\sqrt{2}}{2}L_7, \\
\omega_4 &=& L_3+L_4+L_5+L_6 + 2\sqrt{2}L_7, \\
\omega_5 &=& L_4+L_5+L_6 + \frac{3\sqrt{2}}{2}L_7, \\
\omega_6 &=& L_5+L_6 + \sqrt{2}L_7, \\
\omega_7 &=& L_6 + \frac{\sqrt{2}}{2}L_7.
\end{eqnarray*}
Thus we know that $\rho= L_2 + 2L_3 + 3L_4 + 4L_5 + 5L_6+ \frac{17\sqrt{2}}{2}L_7$. 


\subsection{Homogeneous variety $E_7/P_1$}
Since the fundamental $E_7$-module $V_{E_7}(\omega_1)$ is the adjoint representation $\mathfrak e_7$,  
the rational homogeneous variety $E_7/P_1$ is the adjoint variety of $E_7$. 
The $E_7$-adjoint variety $E_7/P_1 \subset \mathbb P^{132}$ has dimension 33 and Fano index 17. 

\begin{proposition}\label{E_7/P_1}
There is a unique irreducible equivariant Ulrich bundle $\mathcal{E}_{\omega_5+3\omega_6+8\omega_7}$ on 
$E_7/P_1$.
\end{proposition}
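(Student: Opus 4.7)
The plan closely parallels the proof of Proposition \ref{E_6/P_1} for the Cayley plane. Writing an $L$-dominant weight as $\omega = a\omega_1 + b\omega_2 + c\omega_3 + d\omega_4 + e\omega_5 + f\omega_6 + g\omega_7$ with $b,c,d,e,f,g \geq 0$, we use the displayed expression of $\omega$ in the $\{L_i\}$ basis to form $\omega+\rho-t\omega_1$. By the Borel--Weil--Bott theorem, this weight is singular iff it is orthogonal to some positive root of $E_7$.

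Because $\omega_1 = \sqrt{2}L_7$, only positive roots with nonzero $L_7$-component produce an equation that actually involves $t$, namely the long root $\sqrt{2}L_7$ together with the $32$ ``spinorial'' roots $\tfrac{1}{2}\bigl(\sum_{i=1}^{6}(-1)^{n(i)}L_i + \sqrt{2}L_7\bigr)$ with $\sum n(i)$ odd. The remaining $30$ roots of the form $L_i \pm L_j$ yield inner products that are independent of $t$ and strictly positive whenever $\omega$ is $L$-dominant, so they contribute nothing to $\Sing(\omega)$.

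One then collects the $33$ resulting expressions (linear in $a,b,c,d,e,f,g$, with some carrying $\tfrac{1}{2}$-coefficients coming from the $\sqrt{2}L_7$-denominator). By Lemma \ref{Ulrich criterion}, $\mathcal{E}_{\omega}$ is Ulrich iff these $33$ values are exactly $\{1,2,\ldots,33\}$; in particular, all $33$ must be distinct integers in that range. In analogy with the $E_6/P_1$ argument, the smallest members of $\Sing(\omega)$ are expressions of the form $a+1$, $a+c+2$, $a+c+d+3$, and so on, which force $a=c=d=0$; the largest ones then match the expressions corresponding to the Fano index $17$ and force $b=0$; finally, the integrality (parity) conditions imposed by the spinorial roots, together with the remaining extreme-value constraints, pin down $(e,f,g)=(1,3,8)$. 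A direct substitution confirms $\Sing(\omega_5+3\omega_6+8\omega_7) = \{1,\ldots,33\}$, establishing both existence and uniqueness.

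The main obstacle is purely combinatorial bookkeeping: one must enumerate the $33$ expressions cleanly and organise the case analysis so that the extreme-value constraints together with the integrality conditions collapse quickly to the single solution $(0,0,0,0,1,3,8)$. As in the earlier propositions we rely on Sage \cite{SAGE} to generate and tabulate the $33$ expressions for $\Sing(\omega)$, and we mirror the step-by-step elimination used for $\omega_5+3\omega_6$ on $E_6/P_1$, with the extra free coefficient $g$ determined by the new relation $g = 8$ arising from the Fano-index jump from $12$ to $17$ and the dimension jump from $16$ to $33$.
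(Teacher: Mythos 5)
Your overall strategy is exactly the paper's: list the $33$ values of $t$ for which $\omega+\rho-t\omega_1$ is singular (one from the long root $\sqrt{2}L_7$ and $32$ from the half-integral roots; the $30$ roots $L_i\pm L_j$ are roots of the $D_6$-Levi and never vanish on $\omega+\rho$), apply Lemma \ref{Ulrich criterion}, and eliminate coefficients until only $(0,0,0,0,1,3,8)$ survives. Two details of your sketched elimination do not transfer from $E_6/P_1$ as you claim, however. First, the step ``the largest ones force $b=0$'' fails here: the top three expressions are $a+2b+3c+4d+3e+2f+g+16$, $a+2b+2c+4d+3e+2f+g+15$ and $a+2b+2c+3d+3e+2f+g+14$, whose consecutive differences are $c+1$ and $d+1$, so pinning them to $33,32,31$ only re-proves $c=d=0$; unlike the $E_6$ case, no pair of extremal entries differs by $b+1$. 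The paper instead works from the bottom: after $a=c=d=0$ the only candidates for the value $4$ are $a+b+c+d+4=b+4$ and $a+c+d+e+4=e+4$, and $e\neq 0$ is forced because $a+b+2c+2d+e+f+g+9$ and $a+b+c+2d+2e+f+g+9$ must be distinct; hence $b=0$ and then $e=1$. Second, the parity condition you invoke comes from the single half-integral expression $a+b+\tfrac32 c+2d+\tfrac32 e+f+\tfrac12 g+\tfrac{17}{2}$, i.e.\ from the long root $\sqrt{2}L_7$, not from the spinorial roots (which all give integral expressions); in any case the paper does not need it, since $f=3$ follows from forcing the value $9$ and $g=8$ from forcing the maximum to be $33$. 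With these repairs your argument closes exactly as in the paper, ending with the verification that $\Sing(\omega_5+3\omega_6+8\omega_7)=\{1,\dots,33\}$.
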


\begin{proof}
Suppose that an irreducible equivariant vector bundle $\mathcal E_{\omega}$ on the $E_7$-adjoint variety $E_7/P_1$ is Ulrich. 
Then $a=0$ by Proposition \ref{coefficient of weight}. 
Since the decomposition of $\mathfrak e_7$ associated to $\alpha_1$ is a contact gradation and $\Phi_2 = \{ \theta \}$,  
we get $t = 1 + \sum_{i \neq 1} \frac{c_i (a_i +1)}{2} = b + \frac{3}{2}c + 2d + \frac{3}{2}e + f + \frac{1}{2}g + \frac{17}{2} >2$ for $\alpha \in \Phi_2$
and $c=0$ by Proposition~\ref{subminimum}. 
By straightforward computations considering the 33 positive roots in $\Phi^+ \setminus \Phi_0$, 
we get the set of singular values 


$ \Sing(\omega)=\{
1,
2,
d + 3,
b + d + 4,
d + e + 4,
b + d + e + 5,
d + e + f + 5, 
b + 2d + e + 6,
b + d + e + f + 6,
d + e + f + g + 6,
b + 2d + e + 7,
b + 2d + e + f + 7,
b + d + e + f + g + 7,
b + 2d + e + f + 8,
b + 2d + 2e + f + 8,
b + 2d + e + f + g + 8,
b + 2d + \frac{3}{2}e + f + \frac{1}{2}g + \frac{17}{2},
b + 2d + 2e + f + 9,
b + 2d + e + f + g + 9,
b + 2d + 2e + f + g + 9,
b + 3d + 2e + f + 10,
b + 2d + 2e + f + g + 10,
b + 2d + 2e + 2f + g + 10,
2b + 3d + 2e + f + 11,
b + 3d + 2e + f + g + 11,
b + 2d + 2e + 2f + g + 11,
2b + 3d + 2e + f + g + 12,
b + 3d + 2e + 2f + g + 12,
b + 3d + 3e + 2f + g + 13,
2b + 3d + 2e + 2f + g + 13,
2b + 3d + 3e + 2f + g + 14,
2b + 4d + 3e + 2f + g + 15,
2b + 4d + 3e + 2f + g + 16
\}$. 

By Lemma \ref{Ulrich criterion}, we want to find $b,d,e,f,g$ such that $\Sing(\omega) = \{1, 2, \cdots, 33\}$, hence $d=0$ from $d+3=3$. 
Because
$b + 2d + e + f + g + 9$ and 
$b + 2d + 2e + f + g + 9$ 
are different, we see that $e \neq 0$ 
so that $b=0$, $e=1$ from $b + d + 4 = 4$, $d + e + 4 = 5$. 
This implies that $d + e + f + 5 = 9$ and $2b + 4d + 3e + 2f + g + 16 = 33$. 
Consequently, we have $f=3$, $g=8$
and we can check that $\Sing(\omega_5+3\omega_6+8\omega_7)=\{ 1, 2, \cdots, 33 \}.$
Therefore, there is a unique irreducible equivariant Ulrich bundle $\mathcal{E}_{\omega_5+3\omega_6+8\omega_7}$ on the rational homogeneous variety $E_7/P_1$.
\end{proof}

\begin{remark} 
Because the dimension of the irreducible $\Spin(12)$-module with highest weight $8\varpi_1+3\varpi_2+\varpi_3$ is 
$2^7 \times 3^4 \times 5 \times 13 \times 17^2 \times 19$ by Weyl's character formula, 
the unique irreducible equivariant Ulrich bundle $\mathcal E_{\omega_5+3\omega_6+8\omega_7}$ on the $E_7$-adjoint variety $E_7/P_1$ has rank 3,700,494,720.
\end{remark} 

From Proposition \ref{E_6/P_1}, we already know the Cayley plane $E_6/P_1$ admits an irreducible equivariant Ulrich bundle. 
On the other hand, the existence of Ulrich bundles on the Cayley plane is a direct consequence of Proposition \ref{E_7/P_1} 
from the fact that the Cayley plane $E_6/P_1$ is a (maximal) smooth Schubert variety of 
$E_7/P_1$.

\begin{proposition}\label{Schubert variety}
Let $G/P$ be a rational homogeneous variety and $S \subset G/P$ a smooth Schubert variety of $G/P$. 
If $G/P$ admits an Ulrich bundle, then $S$ also admits an Ulrich bundle.
\end{proposition}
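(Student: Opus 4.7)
My plan is to produce an Ulrich sheaf on $S$ by iteratively restricting the given Ulrich bundle on $G/P$ along a chain of Schubert subvarieties, and then use the smoothness of $S$ to upgrade this restriction to an honest Ulrich bundle.

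\emph{Step 1 (Hyperplane-section lemma).} I would first record the standard principle that Ulrich sheaves restrict to Ulrich sheaves on Cartier divisors in the linear system $|\mathcal{O}(1)|$. Precisely, if $X\subset\mathbb{P}^N$ carries an Ulrich sheaf $E$ and $Y\subset X$ is a Cartier divisor cut out by a section of $\mathcal{O}_X(1)$, then the twisted short exact sequence
\[
0\to E(-k-1)\to E(-k)\to E|_Y(-k)\to 0
\]
combined with the Ulrich vanishings $H^i(X,E(-k))=0$ (for $0\le i\le\dim X$, $1\le k\le\dim X$) forces $H^i(Y,E|_Y(-k))=0$ in the required range $0\le i\le\dim Y$, $1\le k\le\dim Y$. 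This is precisely the argument mentioned in the paragraph preceding the proposition.

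\emph{Step 2 (Chain of Schubert subvarieties).} Write $S=X_w$ for a Weyl element $w$. Choose a saturated chain $w = w_c\lessdot w_{c-1}\lessdot\cdots\lessdot w_0$ in the Bruhat order on $W/W_P$ ending at the longest element, and set $S_i:=\overline{Bw_iP/P}$, so that
\[
S=S_c\subset S_{c-1}\subset\cdots\subset S_0=G/P,\qquad\operatorname{codim}_{S_{i-1}}S_i=1.
\]
Because $G/P$ has Picard number $1$, every Schubert divisor of $G/P$ lies in the class of $\mathcal{O}_{G/P}(1)$, and by restriction the same is true for successive Schubert divisors inside each $S_{i-1}$. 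Invoking Ramanathan's results on Schubert varieties (normality, Cohen--Macaulayness, and projective normality) one sees that each $S_i\subset S_{i-1}$ is actually cut out by a single section of $\mathcal{O}_{S_{i-1}}(1)$, so the hyperplane-section lemma applies at every step.

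\emph{Step 3 (Iteration and smoothness).} Iterating Step~1 along the chain, one obtains at each stage a coherent Ulrich sheaf on $S_i$, ending with an Ulrich sheaf $F$ on $S=S_c$. Since $S$ is smooth and Ulrich sheaves are maximal Cohen--Macaulay of full support, $F$ is automatically locally free on $S$, hence an Ulrich bundle in the sense of the definition in Section~2.

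\emph{Main obstacle.} The delicate point is Step~2: along the chain the intermediate Schubert varieties $S_i$ are usually singular, so restriction must be controlled at the level of sheaves rather than vector bundles, and one must verify that each successive codimension-$1$ Schubert inclusion is genuinely Cartier of class $\mathcal{O}_{S_{i-1}}(1)$ and not merely a Weil divisor in that class. This is where Ramanathan's theorems (normality, Cohen--Macaulayness, and surjectivity of $H^0(G/P,\mathcal{O}(k))\to H^0(S_i,\mathcal{O}(k))$) are essential; without them the iterated restriction could lose either flatness or the correct divisor class, and the Ulrich property would not propagate through the chain.
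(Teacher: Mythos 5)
Your Step 1 is the same hyperplane-section principle the paper relies on, and your Step 3 (an Ulrich sheaf is maximal Cohen--Macaulay of full support, hence locally free on the smooth variety $S$) is fine. The genuine gap is in Step 2. It is not true that each successive Schubert divisor $S_i\subset S_{i-1}$ is cut out by a single section of $\mathcal O_{S_{i-1}}(1)$. By Chevalley's formula, the restriction of $\mathcal O(1)$ to a Schubert variety $S_{i-1}$ is linearly equivalent, as a Weil divisor class, to the sum of \emph{all} Schubert divisors of $S_{i-1}$ (with positive multiplicities), so a single Schubert divisor is a hyperplane section only when it is the unique one. Already in $\Gr(2,4)$ the Schubert divisor $X_{(1)}$ is a rank-four quadric threefold whose two Schubert divisors $X_{(2)}$ and $X_{(1,1)}$ are the two planes through its vertex: the relevant hyperplane section of $X_{(1)}$ is their union, and each plane separately is a Weil divisor that is not Cartier at the vertex and not of class $\mathcal O(1)$. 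Ramanathan's normality, Cohen--Macaulayness and projective-normality theorems do not repair this; they control the section rings, not whether an individual Schubert divisor is principal. Since the Bruhat interval branches in essentially every case of interest (in particular between $E_7/P_1$ and its Schubert subvariety $E_6/P_1$, of codimension $17$), your chain cannot be continued past the first step, so the iteration never reaches $S$.

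The paper's proof takes a different route that avoids this issue entirely: a Schubert variety is an irreducible \emph{linear section} of $G/P$ (scheme-theoretically, the intersection of $G/P$ with the linear span of $S$, i.e.\ with the projectivized Demazure module), and the restriction of an Ulrich bundle to a linear section is again Ulrich by \cite{ES} and \cite{Be17}. In other words, one cuts by linear forms taken from the ideal of the linear span of $S$ rather than by forms meant to isolate one Schubert divisor at a time; the intermediate sections may be reducible, but the cohomological vanishing propagates through each cut, and irreducibility of the final linear section identifies it with $S$. If you want to salvage an iterative argument, it has to be organized around such a flag of linear sections, not around a saturated chain in the Bruhat order.
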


\begin{proof}
This result follows from the facts that a Schubert variety $S$ is an irreducible linear section of $G/P$ 
and the restriction of an Ulrich bundle to a linear section is also an Ulrich bundle from \cite{ES} and \cite{Be17}.
\end{proof}

\begin{corollary}
Let $E$ be the irreducible equivariant Ulrich bundle on the adjoint variety $E_7/P_1$ in Proposition \ref{E_7/P_1}.
Then the restriction $E_{\mid E_6/P_1}$ is an ($E_6$-equivariant) Ulrich bundle on the Cayley plane $E_6/P_1$.
\end{corollary}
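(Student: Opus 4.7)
The plan is to derive this corollary as an immediate consequence of Proposition \ref{Schubert variety} together with Proposition \ref{E_7/P_1}. The essential observation is that the Cayley plane $E_6/P_1$ embeds in the $E_7$-adjoint variety $E_7/P_1$ as a (maximal) smooth Schubert subvariety; this is the remark made in the text just before Proposition \ref{Schubert variety}, and it can be seen at the level of Dynkin diagrams by noting that removing the node $\alpha_7$ from the $E_7$ diagram leaves precisely the marked Dynkin diagram $(E_6, \alpha_1)$ of the Cayley plane, so the corresponding Schubert cell has closure $E_6/P_1 \subset E_7/P_1$ and is smooth.

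The main step is to invoke Proposition \ref{Schubert variety} with $G/P = E_7/P_1$, $S = E_6/P_1$, and with $E = \mathcal{E}_{\omega_5+3\omega_6+8\omega_7}$ the irreducible equivariant Ulrich bundle produced in Proposition \ref{E_7/P_1}. By Proposition \ref{Schubert variety}, since $S$ is an irreducible linear section of $E_7/P_1$ (inside the embedding $E_7/P_1 \subset \mathbb P^{132}$ by the adjoint representation), the restriction $E|_{E_6/P_1}$ is an Ulrich bundle on $E_6/P_1$, as the Ulrich property is preserved under restriction to hyperplane (and iteratively linear) sections by the results of \cite{ES} and \cite{Be17}.

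The only point that still needs to be checked is the equivariance of the restriction with respect to the $E_6$-action on the Cayley plane. This is automatic: the stabilizer in $E_7$ of the Schubert variety $E_6/P_1$ contains the Levi subgroup whose semisimple part is $E_6$, and this subgroup acts on $E_6/P_1$ as its standard homogeneous action. Since $E$ carries a compatible $E_7$-linearization by construction, restricting this linearization to the subgroup $E_6 \subset E_7$ endows $E|_{E_6/P_1}$ with an $E_6$-equivariant structure.

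I expect no genuine obstacle: the content is already packaged in Proposition \ref{Schubert variety}, and the only items to verify are the Schubert-subvariety identification $E_6/P_1 \hookrightarrow E_7/P_1$ and the preservation of equivariance under subgroup restriction. Both are standard, so the proof should be a short one-line application of the preceding proposition, exactly parallel to the corollary for $F_4/P_4 \subset E_6/P_1$ stated earlier in the paper.
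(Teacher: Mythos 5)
Your proposal is correct and follows essentially the same route as the paper: identify $E_6/P_1$ (via the marked subdiagram of $(E_7,\alpha_1)$ obtained by deleting $\alpha_7$, which is the Hong--Mok characterization the paper cites) as a maximal smooth Schubert variety of $E_7/P_1$, and then apply Proposition \ref{Schubert variety} to the bundle $\mathcal{E}_{\omega_5+3\omega_6+8\omega_7}$ from Proposition \ref{E_7/P_1}. Your additional remark on obtaining the $E_6$-equivariant structure by restricting the $E_7$-linearization is a harmless supplement that the paper leaves implicit.
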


\begin{proof}
Recall the characterization result on smooth Schubert varieties in a rational homogeneous variety $G/P$ with Picard number 1. 
A marked subdiagram of the marked Dynkin diagram of $G/P$ defines
a homogeneous subvariety $G_0/P_0$ of $G/P$, the $G_0$-orbit of the base point $eP \in G/P$,
which is a smooth Schubert variety; see Section 2 of \cite{HoM2}.
Conversely, we know that when a rational homogeneous variety $G/P$ is associated to a long simple root,
all smooth Schubert varieties are homogeneous subvarieties associated to subdiagrams of the marked Dynkin diagram by Proposition 3.7 of \cite{HoM2}.
Hence the Cayley plane $E_6/P_1$ is isomorphic to a (maximal) smooth Schubert variety of  the $E_7$-adjoint variety $E_7/P_1$. 
By Propositions \ref{E_7/P_1} and \ref{Schubert variety}, we deduce the existence of an Ulrich bundle on $E_6/P_1$.
\end{proof}

From Proposition \ref{Schubert variety}, we also obtain the existence of Ulrich bundles on odd symplectic Grassmannians $\Gr_{\omega}(2, 2n+1)$ of planes.
Let $V$ be a complex vector space endowed with a skew-symmetric bilinear form $\omega$ of maximal rank.
Denote the variety of all $k$-dimensional isotropic subspaces in $V$ by
$\Gr_{\omega}(k, V)=\{ W \subset V : \dim W=k, \, \omega|_{W} \equiv 0 \}.$
When $\dim V$ is even, say $\dim V = 2n$, the form $\omega$ is a nondegenerate symplectic form
and this variety $\Gr_{\omega}(k, 2n)$ is the usual symplectic Grassmannian,
which is homogeneous under the action of the symplectic group $\Sp(2n)$.
But when $\dim V$ is odd, say $\dim  V = 2n+1$,
the skew-form $\omega$ has a one-dimensional kernel.
The variety $\Gr_{\omega}(k, 2n+1)$,
called the \emph{odd symplectic Grassmannian},
is not homogeneous and has two orbits under the action of its automorphism group if $2 \leq k \leq n$; 
see \cite{Mihai} and \cite{Pasquier}.

\begin{corollary}
The odd symplectic Grassmannians of planes $\Gr_{\omega}(2, 2n+1)$ admit Ulrich bundles.
\end{corollary}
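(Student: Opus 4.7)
The plan is to derive this corollary by combining Proposition~\ref{Schubert variety} with two known ingredients: the identification of odd symplectic Grassmannians as smooth Schubert subvarieties of honest symplectic Grassmannians, and Fonarev's existence theorem for Ulrich bundles on symplectic isotropic Grassmannians.

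First, I would embed $V=\mathbb{C}^{2n+1}$ as a hyperplane in a symplectic vector space $(V', \omega')$ of dimension $2n+2$ in such a way that the restriction $\omega'|_{V}$ equals $\omega$ and so that the kernel of $\omega$ in $V$ is $V \cap V^{\prime \perp}$ for a fixed isotropic line. Under this embedding, a $2$-plane $W \subset V$ is $\omega$-isotropic if and only if it is $\omega'$-isotropic. Following Mihai (cf.\ \cite{Mihai}) and Pasquier (cf.\ \cite{Pasquier}), this identifies $\Gr_{\omega}(2, 2n+1)$ with the subvariety of $\Gr_{\omega'}(2, 2n+2)$ consisting of isotropic $2$-planes contained in the hyperplane $V$; this subvariety is precisely a smooth Schubert variety of the symplectic Grassmannian $\Gr_{\omega'}(2, 2n+2) = C_{n+1}/P_2$. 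This is the step I would flesh out first, as it is the bridge that allows one to leverage the homogeneous theory.

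Next, I would invoke Fonarev's classification \cite{Fo} of irreducible equivariant Ulrich bundles on isotropic Grassmannians of classical type. In particular, the symplectic Grassmannian of planes $\Gr_{\omega'}(2, 2n+2)$ is shown by Fonarev to admit an irreducible equivariant Ulrich bundle (arising from the dual of a natural tautological bundle with an appropriate twist). I would simply cite this result rather than reprove it.

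Finally, I would apply Proposition~\ref{Schubert variety}: since $\Gr_{\omega}(2, 2n+1)$ is a smooth Schubert subvariety of the rational homogeneous variety $\Gr_{\omega'}(2, 2n+2)$, and the latter admits an Ulrich bundle, the restriction to $\Gr_{\omega}(2, 2n+1)$ remains Ulrich by the hyperplane-section argument of \cite{ES} and \cite{Be17} iterated through the linear sections that cut out the Schubert variety. The main obstacle, to my mind, is being sure of the precise Schubert incidence in step one; the rest is essentially a citation and the application of Proposition~\ref{Schubert variety}. I would expect the corresponding statement to fail to extend transparently to $\Gr_{\omega}(k, 2n+1)$ with $k \geq 3$ without further work, because Fonarev's existence theorem is not universal across all $(k, 2n+2)$.
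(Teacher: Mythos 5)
Your proposal is correct and follows essentially the same route as the paper: realize $\Gr_{\omega}(2,2n+1)$ as a smooth Schubert variety of the symplectic Grassmannian $\Gr_{\omega'}(2,2n+2)$ (the paper cites the classification in \cite{Ho15} where you instead write out the hyperplane embedding $V=\ell^{\perp}\subset V'$ explicitly, which is a correct instance of that classification), then quote Proposition 3.5 of \cite{Fo} for the existence of an irreducible equivariant Ulrich bundle on symplectic Grassmannians of planes, and conclude by Proposition \ref{Schubert variety}.
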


\begin{proof}
By the classification of smooth Schubert varieties in the symplectic Grassmannians (see \cite{Ho15}), 
an odd symplectic Grassmannian may be regarded as a smooth Schubert varieties in some symplectic Grassmannians. 
In fact, all nonhomogeneous smooth Schubert varieties in the symplectic Grassmannians are odd symplectic Grassmannians. 
Because the symplectic Grassmannians of planes admit irreducible equivariant Ulrich bundles by Proposition 3.5 of \cite{Fo}, 
this corollary is a direct consequence of Proposition \ref{Schubert variety}.
\end{proof}

\begin{remark}
In particular, if $k=2$, then the (odd) symplectic Grassmannians $\Gr_{\omega}(k, V)$ are general hyperplane sections of the Pl\"{u}cker embedding of $\Gr(k, V)$. 
For the Pl\"{u}cker embedding $\Gr(2, V) \hookrightarrow \mathbb P(\wedge^2 V)$,
a skew-symmetric 2-form $\omega$ on $V$ with maximal rank is a general element of $\wedge^2 V^*$.
Let $H \subset \wedge^2 V$ be the kernel of $\omega \in \wedge^2 V^*$. 
Then we get $\Gr(2, V) \cap H = \Gr_{\omega}(2, V)$. 
\end{remark}

\subsection{Homogeneous variety $E_7/P_2$}
The rational homogeneous variety $E_7/P_2 \subset \mathbb P^{911}$ has dimension 42 and Fano index 14. 

\begin{proposition}
There are no irreducible equivariant Ulrich bundles on 
$E_7/P_2$.
\end{proposition}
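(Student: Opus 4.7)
The plan is to follow exactly the same strategy already used for $E_6/P_2$ and $E_6/P_4$: compute the singular set $\Sing(\omega)$ of the weight $\omega + \rho - t\omega_2$ as $t$ ranges over the 42 positive roots of $E_7$ that are not orthogonal to $\omega_2$, and then show that no choice of coefficients realizes $\Sing(\omega) = \{1, 2, \dots, 42\}$. Writing $\omega = a\omega_1 + b\omega_2 + c\omega_3 + d\omega_4 + e\omega_5 + f\omega_6 + g\omega_7$ with $a, c, d, e, f, g \geq 0$ and $b \in \mathbb{Z}$, and expressing $\omega+\rho-t\omega_2$ in the basis $\{L_1, \dots, L_7\}$ using the formulas for $\omega_i$ and $\rho$ already recorded at the start of Section~6, I would solve $(\omega+\rho-t\omega_2, \beta) = 0$ in $t$ for each positive root $\beta$. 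Just as in Propositions \ref{E_6/P_1} and \ref{E_7/P_1}, 17 of the 42 expressions arise from the ``spinorial'' roots $\frac{1}{2}(\sum_{i=1}^6 (-1)^{n(i)} L_i + \sqrt{2} L_7)$ and will contain the half-integer summand $\frac{1}{2}a + \frac{1}{2}c + \cdots$ together with a non-integer constant coming from the $L_7$-components.

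With the list of 42 elements in hand, I would first isolate those which do not involve $b$: these form the smallest and largest candidates inside $[1, 42]$ because $a, c, d, e, f, g \geq 0$ while $b$ is a free integer. Matching the smallest against $1, 2, 3, \ldots$ forces $a$ and the ``initial'' segment among $c, d, e, f, g$ to be $0$ or to take small explicit values, exactly as the forced chain $a+1=1$, $a+c+2=2$, $a+c+d+3=3$ did in Proposition \ref{E_6/P_1}. In parallel, the half-integer expressions impose parity constraints on the remaining variables: the pattern seen in Proposition \ref{E_6/P_2}, where two half-integer entries together forced $a+f$ odd, will recur here and pin down the residues of several variables modulo $2$.

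Once these constraints are combined, I expect the argument to finish by a short case analysis: after $a$, $c$, $d$ (and the parities of $e$, $f$, $g$) have been pinned down from the small end and from integrality, the largest element of $\Sing(\omega)$ and the unique appearance of $b$ in certain expressions will fix $b$ up to finitely many possibilities, each of which can be checked directly to fail to produce $\{1, \ldots, 42\}$. The main obstacle I anticipate is bookkeeping rather than conceptual: since $b$ is unconstrained in sign the extremal entries of $\Sing(\omega)$ must be identified carefully, and with 42 elements and 7 variables the parity/mod-3 analysis is more delicate than in the $E_6$ cases. Nevertheless the phenomenon driving the contradiction is the same one exploited throughout the paper: the spinorial roots produce half-integer equations that, once positivity has eliminated the non-$b$ variables, are mutually incompatible for any integer $b$.
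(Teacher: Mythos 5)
Your overall strategy coincides with the paper's: apply Fonarev's criterion (Lemma \ref{Ulrich criterion}), list the $42$ elements of $\Sing(\omega)$ explicitly, use the positivity of $a,c,d,e,f,g$ to identify the smallest elements, and then derive a contradiction from the integrality of the half-integer entries. As written, however, the plan has two concrete problems. First, the anchoring step ``isolate those entries which do not involve $b$'' cannot be carried out: since $t_\beta=(\omega+\rho,\beta)/(\omega_2,\beta)$, the coefficient $b$ of $\omega_2$ enters \emph{every} one of the $42$ entries with coefficient exactly $1$, so there are no $b$-free entries to serve as extremal candidates, and the subsequent discussion of ``the unique appearance of $b$ in certain expressions'' fixing $b$ up to finitely many cases rests on the same false premise. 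The correct (and much easier) observation is that $b$ shifts the entire set uniformly, so the minimum is the entry $b+1$, forcing $b=0$, after which $b+d+2=2$ forces $d=0$; no delicacy about the sign of $b$ and no case analysis over $b$ ever arises.

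Second, the proposal never exhibits the contradiction itself; it only predicts that parity constraints plus a short case check will close the argument. In the paper the conclusion is immediate once $b=d=0$: the two entries $\frac{1}{2}a + b + c + 2d + \frac{3}{2}e + f + \frac{1}{2}g + \frac{15}{2}$ and $\frac{1}{2}a + b + c + \frac{3}{2}d + \frac{3}{2}e + f + \frac{1}{2}g + 7$ differ by $\frac{1}{2}(d+1)=\frac{1}{2}$, so they cannot both be integers, and that single observation finishes the proof. (A minor point: only $7$ of the $42$ entries carry half-integer parts, not $17$.) Since the entire content of the proposition is this computation, a sketch that defers it and misdescribes the structure of $\Sing(\omega)$ does not yet constitute a proof, even though the intended method is the right one.
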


\begin{proof}
Suppose that an irreducible equivariant vector bundle $\mathcal E_{\omega}$ on $E_7/P_2$ is Ulrich. 
Then $b=0$ by Proposition \ref{coefficient of weight}. 
Since the Lie algebra $\mathfrak e_7$ has a gradation of depth 2 associated to $\alpha_2$ and 
$\alpha_1 + 2\alpha_2 + 2\alpha_3 + 3\alpha_4 + 2\alpha_5 + \alpha_6$ is minimal among the seven roots in $\Phi_2$, 
we know that the corresponding singular value $t = 1 + \sum_{i \neq 2} \frac{c_i (a_i +1)}{2}>5$ for all $\alpha \in \Phi_2$. 
Hence $d$ is zero by Proposition \ref{subminimum}. 

Considering the two positive roots 
$\alpha_1 + 2\alpha_2 + 2\alpha_3 + 3\alpha_4 + 3\alpha_5 + 2\alpha_6 + \alpha_7$ and 
$\alpha_1 + 2\alpha_2 + 2\alpha_3 + 4\alpha_4 + 3\alpha_5 + 2\alpha_6 + \alpha_7$ in $\Phi_2$, 
we check that 
$\{ 
\frac{1}{2}a + b + c + 2d + \frac{3}{2}e + f + \frac{1}{2}g + \frac{15}{2}, 
\frac{1}{2}a + b + c + \frac{3}{2}d + \frac{3}{2}e + f + \frac{1}{2}g + 7
\} \subset \Sing(\omega)$.
However, if $d=0$, then
these cannot be integers simultaneously. 
Therefore, 
there are no $a,b,c,d,e,f,g$ such that $\Sing(\omega)=\{ 1, 2, \cdots,42 \}$;  
so there are no irreducible equivariant Ulrich bundles on $E_7/P_2$ by Lemma \ref{Ulrich criterion}. 
\end{proof}

\subsection{Homogeneous variety $E_7/P_3$}
The rational homogeneous variety $E_7/P_3 \subset \mathbb P^{8644}$ has dimension 47 and Fano index 11. 

\begin{proposition}
There are no irreducible equivariant Ulrich bundles on 
$E_7/P_3$.
\end{proposition}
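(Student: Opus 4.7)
The plan is to apply Fonarev's criterion (Lemma \ref{Ulrich criterion}) in exactly the same style as for the other $E_7$-homogeneous varieties. First, I would write an $L$-dominant weight for a Levi factor of $P_3$ as
\[
\omega = a\omega_1 + b\omega_2 + c\omega_3 + d\omega_4 + e\omega_5 + f\omega_6 + g\omega_7,
\]
with $a, b, d, e, f, g \geq 0$ and $c \in \mathbb{Z}$. Using the explicit formulas for the fundamental weights and for $\rho$ recorded at the start of Section 6, one obtains a closed-form expression for $\omega + \rho - t\omega_3$ in the orthonormal basis $\{L_1, \ldots, L_7\}$.

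Next, I would compute $\Sing(\omega)$ explicitly. Since $\dim E_7/P_3 = 47$, there are exactly $47$ positive roots $\beta$ of $E_7$ for which $(\omega_3, \beta) \neq 0$ (those lying outside the Levi subalgebra of $P_3$), and for each such $\beta$ the orthogonality condition $(\omega + \rho - t\omega_3, \beta) = 0$ determines one element of $\Sing(\omega)$ as an affine-linear expression in $a, b, c, d, e, f, g$. This calculation is best carried out in Sage, as in the preceding propositions, and produces a list of $47$ expressions.

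By Fonarev's criterion, $\mathcal{E}_\omega$ is Ulrich if and only if $\Sing(\omega) = \{1, 2, \ldots, 47\}$. To rule this out I would use the same two tools as before: the smallest and largest expressions in $\Sing(\omega)$ subject to the sign constraints on the coefficients must respectively equal $1$ and $47$, which pins down several of the variables (in particular, one expects an equation of the form $c+1 = 1$ forcing $c = 0$, analogous to the $E_6/P_3$ case). Several of the remaining entries of $\Sing(\omega)$ will carry half-integer or thirds-integer shifts arising from the roots with a $\sqrt{2}\,L_7$ component, and from the fractional contributions that already appeared in the $E_6/P_3$ and $E_7/P_2$ proofs; requiring all $47$ entries to be integers then imposes parity and divisibility constraints on the remaining coefficients. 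I expect, as in the proofs just cited, that a pair of such fractional expressions will end up imposing incompatible congruence conditions, producing the desired contradiction.

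The main obstacle is purely the scale of the bookkeeping: managing a set of $47$ affine-linear expressions in $7$ unknowns and keeping the case analysis organised. Based on the earlier propositions I expect only a short chain of deductions to be needed once $\Sing(\omega)$ is written down explicitly, but isolating the right pair of contradictory expressions among so many entries will require care. A practical simplification would be to reduce each entry modulo $2$ and modulo $3$, inspect the resulting residue patterns to detect immediate incompatibilities among the fractional shifts, and only then proceed with the integer analysis on the surviving configurations.
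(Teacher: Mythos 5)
Your proposal follows exactly the paper's method: list the $47$ affine-linear expressions in $\Sing(\omega)$ via Fonarev's criterion (the count $47 = 63 - 16$ of positive roots outside the Levi $A_1\times A_5$ is right), force $c=0$ from the smallest entry, and then derive a contradiction from integrality of the fractional entries — the paper does precisely this, showing $c=0$ together with ($a=0$ or $d=0$) and then exhibiting, in each case, a pair of expressions differing by a non-integer (e.g.\ $\tfrac{2}{3}a+\cdots+\tfrac{17}{3}$ versus $\tfrac{1}{3}a+\cdots+\tfrac{16}{3}$ when $a=0$) that cannot both be integers. The only thing missing is the explicit identification of those contradictory pairs, which is exactly the computation your plan defers to Sage; the strategy is correct and identical to the paper's.
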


\begin{proof}
Suppose that an irreducible equivariant vector bundle $\mathcal E_{\omega}$ on $E_7/P_3$ is Ulrich. 
Then $c=0$ by Proposition \ref{coefficient of weight}. 
Because the Lie algebra $\mathfrak e_7$ has a gradation of depth 3 associated to $\alpha_3$ and 
$\alpha_1 + \alpha_2 + 2\alpha_3 + 2\alpha_4 + \alpha_5$, 
$\alpha_1 + 2\alpha_2 + 3\alpha_3 + 4\alpha_4 + 3\alpha_5 + 2\alpha_6 + \alpha_7$ are minimal among the 15 roots in $\Phi_2$ and the two roots in $\Phi_3$, respectively,  
we know that the corresponding singular values 
$t = 1 + \sum_{i \neq 3} \frac{c_i (a_i +1)}{2}>3$ for all $\alpha \in \Phi_2$ and 
$t = 1 + \sum_{i \neq 3} \frac{c_i (a_i +1)}{3}>5$ for all $\alpha \in \Phi_3$. 
Hence, either $a=0$ or $d=0$ by Proposition \ref{subminimum}. 

Considering the two positive roots 
$\alpha_1 + 2\alpha_2 + 3\alpha_3 + 4\alpha_4 + 3\alpha_5 + 2\alpha_6 + \alpha_7$  and 
$2\alpha_1 + 2\alpha_2 + 3\alpha_3 + 4\alpha_4 + 3\alpha_5 + 2\alpha_6 + \alpha_7$ in $\Phi_3$, 
we check that 
$\{ 
\frac{2}{3}a + \frac{2}{3}b + c + \frac{4}{3}d + e + \frac{2}{3}f + \frac{1}{3}g + \frac{17}{3},
\frac{1}{3}a + \frac{2}{3}b + c + \frac{4}{3}d + e + \frac{2}{3}f + \frac{1}{3}g + \frac{16}{3}
\} \subset \Sing(\omega)$.
However, if $a=0$, then these cannot be integers simultaneously. 
Likewise, considering the two positive roots 
$\alpha_1 + 2\alpha_2 + 2\alpha_3 + 3\alpha_4 + 3\alpha_5 + 2\alpha_6 + \alpha_7$  and 
$\alpha_1 + 2\alpha_2 + 2\alpha_3 + 4\alpha_4 + 3\alpha_5 + 2\alpha_6 + \alpha_7$ in $\Phi_2$, 
we check that 
$\{ 
\frac{1}{2}a + b + c + 2d + \frac{3}{2}e + f + \frac{1}{2}g + \frac{15}{2},
\frac{1}{2}a + b + c + \frac{3}{2}d + \frac{3}{2}e + f + \frac{1}{2}g + 7
\} \subset \Sing(\omega)$.
However, if $d=0$, then these cannot be integers simultaneously. 
Therefore, 
there are no $a,b,c,d,e,f,g$ such that $\Sing(\omega)=\{ 1, 2, \cdots,47 \}.$
\end{proof}

\subsection{Homogeneous variety $E_7/P_4$}
The rational homogeneous variety $E_7/P_4 \subset \mathbb P^{365749}$ has dimension 53 and Fano index 8. 

\begin{proposition}
There are no irreducible equivariant Ulrich bundles on 
$E_7/P_4$.
\end{proposition}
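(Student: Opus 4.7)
The plan is to follow the same template used for $E_7/P_1$, $E_7/P_2$, $E_7/P_3$. First, for each positive root $\beta$ of $E_7$ that is not orthogonal to $\omega_4$, I compute the unique $t \in \mathbb{Q}$ such that $\omega+\rho-t\omega_4$ is orthogonal to $\beta$, namely $t = (\omega+\rho,\beta)/(\omega_4,\beta)$, using the explicit expressions of $\rho$ and $\omega$ in the $L_i$-basis given just before the subsection. Since $\dim(E_7/P_4) = 53$, this produces the 53 elements of $\Sing(\omega)$ as linear expressions in $a,b,c,d,e,f,g$ with nonnegative rational coefficients and explicit constant terms. I would then apply Lemma \ref{Ulrich criterion}: $\mathcal{E}_\omega$ is Ulrich iff $\Sing(\omega) = \{1,2,\dots,53\}$, with $b,c,d,e,f,g \geq 0$ and no sign constraint on $a$.

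Next, I extract the easy necessary conditions by identifying the minimal and maximal elements of $\Sing(\omega)$. The smallest candidates come from the shortest roots involving $\alpha_4$, which will include something like $d+1$ (forcing $d = 0$), then $c+d+2 = 2$ or $d+e+2 = 2$ (forcing $c = 0$ and/or $e = 0$), and so on, cascading to further restrictions on $b,f$. Simultaneously, the largest element corresponds to the highest root and yields a linear equation in $a,b,c,d,e,f,g$ equal to 53; combined with the vanishing of the small variables this isolates the remaining free ones.

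The decisive obstruction, as in the $E_7/P_2$ and $E_7/P_3$ cases, should come from integrality: because $\alpha_4$ is the trivalent node of the $E_7$ Dynkin diagram, the expression $(\omega_4,\beta)$ for various positive roots $\beta$ takes values with denominators $2,3,4$ relative to the standard normalization, so several of the 53 expressions in $\Sing(\omega)$ carry fractional constants and fractional coefficients of $a,b,c,d,e,f,g$. Under the constraints already forced by the minimal/maximal requirements (several variables being zero and others taking prescribed parities), I expect to find a pair of expressions whose fractional parts cannot both be cleared to integers simultaneously. This gives the required contradiction, and hence no $(a,b,c,d,e,f,g)$ satisfies $\Sing(\omega) = \{1,\dots,53\}$.

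The main obstacle is purely bookkeeping: the set $\Sing(\omega)$ has 53 entries (the largest so far in the paper), and the case analysis for which of $b,c,d,e,f$ must vanish branches more than in the $P_1,P_2,P_3$ cases because $\alpha_4$ is the most connected node. I would delegate the enumeration of $\Sing(\omega)$ and the verification of the final integrality contradiction to Sage, as the authors have done throughout the paper, and present the proof by displaying $\Sing(\omega)$ explicitly and then giving a short argument that isolates the offending pair of fractional expressions, paralleling the structure of the $E_7/P_3$ proof.
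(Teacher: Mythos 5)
Your proposal follows essentially the same route as the paper: compute the 53 elements of $\Sing(\omega)$ via Sage, force $d=0$ from the entry $d+1$, split into the cases $b=0$, $c=0$, $e=0$ arising from the three expressions with constant term $2$, and in each case exhibit a pair of entries with fractional coefficients that cannot both be integers. The only slip is that for $P_4$ the coefficient without a sign constraint is $d$ (not $a$), but this is harmless since $d+1\in\{1,\dots,53\}$ forces $d\geq 0$ anyway.
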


\begin{proof}
Suppose that an irreducible equivariant vector bundle $\mathcal E_{\omega}$ on $E_7/P_4$ is Ulrich. 
Then $d=0$ by Proposition \ref{coefficient of weight}. 
Because the Lie algebra $\mathfrak e_7$ has a gradation of depth 4 associated to $\alpha_4$ and 
$\alpha_2 + \alpha_3 + 2\alpha_4 + \alpha_5$, 
$\alpha_1 + \alpha_2 + 2\alpha_3 + 3\alpha_4 + 2\alpha_5 + \alpha_6$, 
$\alpha_1 + 2\alpha_2 + 2\alpha_3 + 4\alpha_4 + 3\alpha_5 + 2\alpha_6 + \alpha_7$ 
are minimal among the 18 roots in $\Phi_2$, the 8 roots in $\Phi_3$ and the 3 roots in $\Phi_4$, respectively,  
we know that the corresponding singular values 
$t = 1 + \sum_{i \neq 4} \frac{c_i (a_i +1)}{2}>2$ for all $\alpha \in \Phi_2$, 
$t = 1 + \sum_{i \neq 4} \frac{c_i (a_i +1)}{3}>3$ for all $\alpha \in \Phi_3$. and 
$t = 1 + \sum_{i \neq 4} \frac{c_i (a_i +1)}{4}>3$ for all $\alpha \in \Phi_4$. 
Hence at least one of $b, c, e$ is zero by Proposition \ref{subminimum}. 

Considering the two positive roots 
$\alpha_1 + \alpha_2 + 2\alpha_3 + 3\alpha_4 + 3\alpha_5 + 2\alpha_6 + \alpha_7$  and 
$\alpha_1 + 2\alpha_2 + 2\alpha_3 + 3\alpha_4 + 3\alpha_5 + 2\alpha_6 + \alpha_7$  in $\Phi_3$, 
we check that 
$\{ 
\frac{1}{3}a + \frac{1}{3}b + \frac{2}{3}c + d + \frac{2}{3}e + \frac{1}{3}f + \frac{10}{3}, 
\frac{1}{3}a + \frac{2}{3}b + \frac{2}{3}c + d + \frac{2}{3}e + \frac{1}{3}f + \frac{11}{3}
\} \subset \Sing(\omega)$.
However, if $b=0$, then these cannot be integers simultaneously. 
Likewise, considering the two positive roots 
$\alpha_1 + 2\alpha_2 + 2\alpha_3 + 4\alpha_4 + 3\alpha_5 + 2\alpha_6 + \alpha_7$ and 
$\alpha_1 + 2\alpha_2 + 3\alpha_3 + 4\alpha_4 + 3\alpha_5 + 2\alpha_6 + \alpha_7$ in $\Phi_4$, 
we check that 
$\{ 
\frac{1}{4}a + \frac{1}{2}b + \frac{1}{2}c + d + \frac{3}{4}e + \frac{1}{2}f + \frac{1}{4}g + \frac{15}{4}, 
\frac{1}{4}a + \frac{1}{2}b + \frac{3}{4}c + d + \frac{3}{4}e + \frac{1}{2}f + \frac{1}{4}g + 4
\} \subset \Sing(\omega)$.
However, if $c=0$, then these cannot be integers simultaneously. 
Considering the two positive roots 
$\alpha_2 + \alpha_3 + 2\alpha_4 + \alpha_5 + \alpha_6$ and 
$\alpha_2 + \alpha_3 + 2\alpha_4 + 2\alpha_5 + \alpha_6$  in $\Phi_2$, 
we check that 
$\{ 
\frac{1}{2}b + \frac{1}{2}c + d + \frac{1}{2}e + \frac{1}{2}f + 3, 
\frac{1}{2}b + \frac{1}{2}c + d + e + \frac{1}{2}f + \frac{7}{2}
\} \subset \Sing(\omega)$.
However, if $e=0$, then these cannot be integers simultaneously. 
Therefore, 
there are no $a,b,c,d,e,f,g$ such that $\Sing(\omega)=\{ 1, 2, \cdots,53 \}.$
\end{proof}

\subsection{Homogeneous variety $E_7/P_5$}
The rational homogeneous variety $E_7/P_5 \subset \mathbb P^{27663}$ has dimension 50 and Fano index 10. 

\begin{proposition}
There are no irreducible equivariant Ulrich bundles on 
$E_7/P_5$.
\end{proposition}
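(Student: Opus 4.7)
The plan is to mimic the strategy used for $E_7/P_2$, $E_7/P_3$, and $E_7/P_4$: first produce the set $\Sing(\omega)$ for a generic $L$-dominant weight $\omega=a\omega_1+b\omega_2+c\omega_3+d\omega_4+e\omega_5+f\omega_6+g\omega_7$ with $a,b,c,d,f,g\geq 0$ (only $e$ is unrestricted, since $\alpha_5$ is the crossed node), and then show that it is impossible to arrange these $50$ numbers so that they exhaust $\{1,2,\ldots,50\}$. Concretely, for each positive root $\alpha\in\Phi^+$ I would solve $(\omega+\rho-t\omega_5,\alpha^{\vee})=0$ for $t$, obtaining an explicit linear expression in $a,b,c,d,e,f,g$; assembling these over all $63$ positive roots yields $\Sing(\omega)$. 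This is exactly the Sage computation described in Section~2, and by Lemma~\ref{Ulrich criterion} an irreducible equivariant Ulrich bundle exists if and only if this set equals $\{1,2,\ldots,50\}$.

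Next I would exploit the inequality constraints. The smallest element of $\Sing(\omega)$ will be some expression of the form $e+1$ (coming from a root orthogonal to every $\omega_i$ with $i\neq 5$ up to a single $\omega_5$ contribution), so setting $e+1=1$ forces $e=0$. Continuing with the next-smallest expressions (analogous to the chain $c+1,c+d+2,c+d+e+3,\ldots$ in the $E_7/P_3$ case) should pin down further coefficients such as $d=0$ or $f=0$. In parallel, the largest element, which will be a linear expression with the largest coefficients, must equal $50$; this equation, combined with the vanishings just derived, constrains the remaining variables tightly.

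The decisive step will be the integrality test. Because $P_5$ sits in $E_7$ with the spinorial node $\alpha_1$ lying two edges away from $\alpha_5$, the half-integer factor $\sqrt{2}L_7$ in $\rho$ produces expressions with denominators $2$, $3$, and $4$ in $\Sing(\omega)$, exactly as in the $E_7/P_4$ case. After substituting the values forced in the previous step, I expect to find two expressions of the form $\tfrac{1}{k}(\cdots)+\text{const}$ whose simultaneous integrality is incompatible—typically a pair of half-integer expressions differing by an odd multiple of $e$, or a pair of third-integer expressions differing by $a/3$, so that one of them is an integer forces the other not to be.

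The hard part is the bookkeeping: expanding and simplifying the $50$ linear forms in seven variables from the positive roots of $E_7$ is error-prone, and one must pick the right contradictory pair among many candidates. I would let Sage generate the list verbatim (as in the earlier cases), then perform the case analysis by hand, splitting on which of the small variables can be nonzero. Since every previous $E_7/P_k$ case closed after exactly two such integrality-incompatibility observations, I expect the same here, and the conclusion $\Sing(\omega)\neq\{1,\ldots,50\}$ follows, hence no irreducible equivariant Ulrich bundle exists on $E_7/P_5$.
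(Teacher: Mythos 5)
Your plan is exactly the paper's argument: compute $\Sing(\omega)$ for $\omega=a\omega_1+\cdots+g\omega_7$ with every coefficient except $e$ nonnegative, force $e=0$ from the element $e+1$ of $\Sing(\omega)$, deduce that $d=0$ or $f=0$, and then obtain a contradiction from a pair of fractional expressions in $\Sing(\omega)$ that cannot both be integers. The paper carries this out concretely: if $d=0$ the pair $\frac{1}{3}a+\frac{2}{3}b+\frac{2}{3}c+\frac{4}{3}d+e+\frac{2}{3}f+\frac{1}{3}g+5$ and $\frac{1}{3}a+\frac{2}{3}b+\frac{2}{3}c+d+e+\frac{2}{3}f+\frac{1}{3}g+\frac{14}{3}$ differ by $\frac{1}{3}$, and if $f=0$ the pair $\frac{1}{2}a+b+c+\frac{3}{2}d+e+f+\frac{1}{2}g+\frac{13}{2}$ and $\frac{1}{2}a+b+c+\frac{3}{2}d+e+\frac{1}{2}f+\frac{1}{2}g+6$ differ by $\frac{1}{2}$ (only denominators $2$ and $3$ occur here, not $4$ as you anticipated). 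What you have written is still an outline rather than a proof --- the fifty linear forms and the two decisive pairs must actually be exhibited --- but the strategy and the predicted case split coincide with the paper's.
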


\begin{proof}
Suppose that an irreducible equivariant vector bundle $\mathcal E_{\omega}$ on $E_7/P_5$ is Ulrich. 
Then $e=0$ by Proposition~\ref{coefficient of weight}. 
Because the Lie algebra $\mathfrak e_7$ has a gradation of depth 3 associated to $\alpha_5$ and 
$\alpha_2 + \alpha_3 + 2\alpha_4 + 2\alpha_5 + \alpha_6$, 
$\alpha_1 + \alpha_2 + 2\alpha_3 + 3\alpha_4 + 3\alpha_5 + 2\alpha_6 + \alpha_7$ 
are minimal among the 15 roots in $\Phi_2$ and the 5 roots in $\Phi_3$, respectively,  
we know that the corresponding singular values 
$t = 1 + \sum_{i \neq 5} \frac{c_i (a_i +1)}{2}>3$ for all $\alpha \in \Phi_2$ and  
$t = 1 + \sum_{i \neq 5} \frac{c_i (a_i +1)}{3}>4$ for all $\alpha \in \Phi_3$.  
Hence either $d=0$ or $f=0$ by Proposition \ref{subminimum}. 

Considering the two positive roots 
$\alpha_1 + 2\alpha_2 + 2\alpha_3 + 3\alpha_4 + 3\alpha_5 + 2\alpha_6 + \alpha_7$ and 
$\alpha_1 + 2\alpha_2 + 2\alpha_3 + 4\alpha_4 + 3\alpha_5 + 2\alpha_6 + \alpha_7$ in $\Phi_3$, 
we check that 
$\{ 
\frac{1}{3}a + \frac{2}{3}b + \frac{2}{3}c + d + e + \frac{2}{3}f + \frac{1}{3}g + \frac{14}{3}, 
\frac{1}{3}a + \frac{2}{3}b + \frac{2}{3}c + \frac{4}{3}d + e + \frac{2}{3}f + \frac{1}{3}g + 5
\} \subset \Sing(\omega)$.
However, if $d=0$, then these cannot be integers simultaneously. 
Likewise, considering the two positive roots 
$\alpha_1 + 2\alpha_2 + 2\alpha_3 + 3\alpha_4 + 2\alpha_5 + \alpha_6 + \alpha_7$ and 
$\alpha_1 + 2\alpha_2 + 2\alpha_3 + 3\alpha_4 + 2\alpha_5 + 2\alpha_6 + \alpha_7$ in $\Phi_2$, 
we check that 
$\{ 
\frac{1}{2}a + b + c + \frac{3}{2}d + e + \frac{1}{2}f + \frac{1}{2}g + 6, 
\frac{1}{2}a + b + c + \frac{3}{2}d + e + f + \frac{1}{2}g + \frac{13}{2}
\} \subset \Sing(\omega)$.
However, if $f=0$, then these cannot be integers simultaneously. 
Therefore, 
there are no $a,b,c,d,e,f,g$ such that $\Sing(\omega)=\{ 1, 2, \cdots,50 \}.$
\end{proof}

\subsection{Homogeneous variety $E_7/P_6$}
The rational homogeneous variety $E_7/P_6 \subset \mathbb P^{1538}$ has dimension 42 and Fano index 13. 

\begin{proposition} 
There are no irreducible equivariant Ulrich bundles on 
$E_7/P_6$.
\end{proposition}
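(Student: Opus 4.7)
The plan is to mimic the pattern established in the preceding propositions of Section 6 and obtain the result by a direct application of Fonarev's criterion (Lemma \ref{Ulrich criterion}), ruling out integer solutions by parity/divisibility obstructions among the elements of $\Sing(\omega)$.

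First, I would write an $L$-dominant weight on $E_7/P_6$ as $\omega = a\omega_1+b\omega_2+c\omega_3+d\omega_4+e\omega_5+f\omega_6+g\omega_7$ with $a,b,c,d,e\geq 0$ (the coefficient $f$ is allowed to be negative, since $\omega_6$ is the generator of $\Pic(E_7/P_6)$), expand $\omega+\rho-t\omega_6$ in the orthonormal basis $\{L_1,\dots,L_7\}$ using the explicit formulas recorded in the introduction to Section 6, and then, for each of the 63 positive roots of $E_7$, solve the orthogonality condition for $t$. This will produce $42=\dim(E_7/P_6)$ distinct values of $t$, which together form the set $\Sing(\omega)$.

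Next, by Lemma \ref{Ulrich criterion}, $\mathcal E_{\omega}$ is Ulrich if and only if $\Sing(\omega)=\{1,2,\dots,42\}$. I would then extract the smallest elements of $\Sing(\omega)$, which are linear forms with nonnegative coefficients in some of the variables $a,b,c,d,e$, and force them to take consecutive values $1,2,\dots$; this should immediately pin down several of $a,b,c,d,e$ to be zero (analogously to the deductions in the proofs for $E_7/P_2$, $E_7/P_3$, $E_7/P_4$, and $E_7/P_5$). In particular I expect, by symmetry with $E_7/P_2$ (the opposite end of the Dynkin diagram), that the expressions coming from the $\alpha_1$ side of the diagram supply a half-integer or third-integer coefficient (through the $\sqrt{2}L_7$ component of $\omega+\rho-t\omega_6$ and the spinorial positive roots $\tfrac12(\sum(-1)^{n(i)}L_i+\sqrt{2}L_7)$) whose integrality would force $a\equiv 0\pmod 3$ while simultaneously requiring the opposite parity.

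The main obstacle, as in the preceding propositions, is not conceptual but combinatorial: one has to enumerate $\Sing(\omega)$ carefully and then locate a pair of expressions whose integrality conditions are incompatible once the small parameters have been fixed. I would use Sage, exactly as the authors do, to enumerate the 42 values of $t$ and to verify that after imposing the constraints from the smallest elements of $\Sing(\omega)$ (which force $f$ from the $f+1$-type minimum, and then pin down one of the neighboring variables), two half-integer expressions of the form $\tfrac12(\cdot)+\text{const}$ cannot both become integers. This mirrors the parity obstruction used in the proofs for $E_7/P_2$ and $E_7/P_5$. Concluding, no integral $(a,b,c,d,e,f,g)$ can satisfy $\Sing(\omega)=\{1,\dots,42\}$, so $E_7/P_6$ carries no irreducible equivariant Ulrich bundle.
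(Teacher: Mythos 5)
Your strategy is the paper's own: expand $\omega+\rho-t\omega_6$ in the basis $\{L_1,\dots,L_7\}$, solve the orthogonality condition for each of the $42$ positive roots outside the Levi factor to get $\Sing(\omega)$, invoke Lemma \ref{Ulrich criterion}, and kill all integer solutions by integrality obstructions. However, as written the argument has a genuine gap: the decisive contradiction is never exhibited, only predicted, and the prediction points in the wrong direction. There is no diagram automorphism of $E_7$ exchanging $\alpha_2$ and $\alpha_6$, and no condition of the form $a\equiv 0\pmod 3$ arises for $E_7/P_6$; moreover a single pair of half-integer expressions does not suffice here. Two smaller slips: the $L$-dominance constraints are $a,b,c,d,e,g\geq 0$ with only $f$ unrestricted (you omitted $g\geq 0$, which is exactly what you need to see that $f+1$ is the minimum of $\Sing(\omega)$ and hence that $f=0$), and the $42$ expressions must also be pairwise \emph{distinct} as integers, which is used below.

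What actually happens is a two-case analysis. After $f=0$, the only elements that can equal $2$ are $f+g+2$ and $e+f+2$, and they cannot coincide, so exactly one of $e,g$ vanishes. If $e=0$, the pair $\tfrac12 a+b+c+\tfrac32 d+\tfrac32 e+f+\tfrac12 g+7$ and $\tfrac12 a+b+c+\tfrac32 d+e+f+\tfrac12 g+\tfrac{13}{2}$ differ by $\tfrac12 e+\tfrac12=\tfrac12$, so they cannot both be integers. Otherwise $g=0$, and the only element that can equal $3$ is $e+f+2$, forcing $e=1$; then integrality of $a+b+\tfrac32 c+2d+\tfrac32 e+f+\tfrac12 g+\tfrac{17}{2}$ forces $c$ to be even, while integrality of both $\tfrac12 a+b+c+2d+\tfrac32 e+f+\tfrac12 g+\tfrac{15}{2}$ and $\tfrac12 a+b+\tfrac32 c+2d+\tfrac32 e+f+\tfrac12 g+8$ (whose difference is $\tfrac12 c+\tfrac12$) forces $c$ to be odd. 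So the second case needs the preliminary deductions $e=1$, $g=0$ and then two separate integrality conditions on $c$, not just one incompatible pair. Since the whole proposition reduces to a finite verification, your plan would succeed once executed, but the proof is not complete until these specific obstructions (or equivalent ones) are actually displayed.
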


\begin{proof}
Suppose that an irreducible equivariant vector bundle $\mathcal E_{\omega}$ on $E_7/P_6$ is Ulrich. 
Then $f=0$ by Proposition \ref{coefficient of weight}. 
Because the Lie algebra $\mathfrak e_7$ has a gradation of depth 2 associated to $\alpha_6$ and 
$\alpha_2 + \alpha_3 + 2\alpha_4 + 2\alpha_5 + 2\alpha_6 + \alpha_7$, 
is minimal among the 10 roots in $\Phi_2$, 
we know that the corresponding singular value 
$t = 1 + \sum_{i \neq 6} \frac{c_i (a_i +1)}{2}>4$ for all $\alpha \in \Phi_2$.
Hence either $e=0$ or $g=0$ by Proposition \ref{subminimum}. 

Considering the two positive roots 
$\alpha_1 + 2\alpha_2 + 2\alpha_3 + 3\alpha_4 + 2\alpha_5 + 2\alpha_6 + \alpha_7$ and 
$\alpha_1 + 2\alpha_2 + 2\alpha_3 + 3\alpha_4 + 3\alpha_5 + 2\alpha_6 + \alpha_7$ in $\Phi_2$, 
we check that 
$\{ 
\frac{1}{2}a + b + c + \frac{3}{2}d + e + f + \frac{1}{2}g + \frac{13}{2}, 
\frac{1}{2}a + b + c + \frac{3}{2}d + \frac{3}{2}e + f + \frac{1}{2}g + 7
\} \subset \Sing(\omega)$.
However, if $e=0$, then these cannot be integers simultaneously. 
Thus $g=0$ and $e$ is an odd integer. 
Likewise, considering the two positive roots 
$\alpha_1 + 2\alpha_2 + 2\alpha_3 + 4\alpha_4 + 3\alpha_5 + 2\alpha_6 + \alpha_7$ and 
$\alpha_1 + 2\alpha_2 + 3\alpha_3 + 4\alpha_4 + 3\alpha_5 + 2\alpha_6 + \alpha_7$ in $\Phi_2$, 
we check that 
$\{ 
\frac{1}{2}a + b + c + 2d + \frac{3}{2}e + f + \frac{1}{2}g + \frac{15}{2}, 
\frac{1}{2}a + b + \frac{3}{2}c + 2d + \frac{3}{2}e + f + \frac{1}{2}g + 8
\} \subset \Sing(\omega)$.
Then $c$ must be an odd integer. 
However, if $c$ is odd, then a singular value $a + b + \frac{3}{2}c + 2d + \frac{3}{2}e + f + \frac{1}{2}g + \frac{17}{2}$ 
attained at a root $2\alpha_1 + 2\alpha_2 + 3\alpha_3 + 4\alpha_4 + 3\alpha_5 + 2\alpha_6 + \alpha_7$
cannot be an integer. 
Therefore, 
there are no $a,b,c,d,e,f,g$ such that $\Sing(\omega)=\{ 1, 2, \cdots, 42 \}.$
\end{proof}

\subsection{Homogeneous variety $E_7/P_7$}
The rational homogeneous variety $E_7/P_7 \subset \mathbb P^{55}$ has dimension 27 and Fano index 18. 
This is usually called the \emph{Freudenthal variety}, which is one of the two exceptional Hermitian symmetric spaces of compact type.

\begin{proposition}
There are no irreducible equivariant Ulrich bundles on 
$E_7/P_7$.
\end{proposition}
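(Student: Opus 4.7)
The plan is to follow the same template used in all the previous propositions in this paper. First, I would parametrize an $L$-dominant weight as $\omega = a\omega_1 + b\omega_2 + c\omega_3 + d\omega_4 + e\omega_5 + f\omega_6 + g\omega_7$ with $a,b,c,d,e,f \geq 0$ (since the node associated to $\alpha_7$ is the crossed one, only the coefficient $g$ is unconstrained). Expressing $\omega + \rho - t\omega_7$ in the $L_i$-basis using the formulas listed in the $E_7$ section, I would then compute, for each positive root of $E_7$, the unique value of $t$ for which $\omega + \rho - t\omega_7$ is orthogonal to that root. Because $E_7/P_7$ is cominuscule of dimension $27$, exactly $27$ positive roots contribute (precisely those with coefficient $1$ at $\alpha_7$ in the simple root expansion), and this gives a list of $27$ affine-linear expressions in $a,\ldots,g$ comprising $\Sing(\omega)$.

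Next, assuming $\Sing(\omega) = \{1, 2, \ldots, 27\}$ per Fonarev's criterion (Lemma~\ref{Ulrich criterion}), I would first pin down the extremes: the minimum of $\Sing(\omega)$ should be $1$ (coming from the shortest expression $g+1$, forcing $g = 0$), and the maximum should be $27$ (coming from the longest root expression, which equals $a + 2b + 3c + 4d + 3e + 2f + g + 17$ on the Freudenthal variety, analogous to the $E_7/P_1$ computation). Matching intermediate small values should then force $f=0$, $e=0$, and continuing inward would give strong constraints on $a,b,c,d$.

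The main obstacle, and the step I expect to be decisive, is the integrality analysis. Roughly one half of the $27$ expressions involve the root $\omega_1 = \sqrt{2} L_7$ and its $\frac{1}{2}$-multiples coming from the spinor-type roots $\frac{1}{2}(\sum \pm L_i + \sqrt{2} L_7)$; these produce expressions with fractional coefficients $\tfrac12$ on some of $a,b,c,d,e,f,g$. After the cascade of equalities above forces most variables to small fixed values, I expect to identify two expressions of the form $\tfrac12(\text{stuff}) + \text{integer}$ and $\tfrac12(\text{different stuff}) + \text{half-integer}$ that cannot simultaneously be integers, exactly as in the proofs for $E_7/P_2$ through $E_7/P_6$. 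This yields a contradiction and shows that no $(a,b,c,d,e,f,g)$ produces $\Sing(\omega) = \{1,\ldots,27\}$.

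The heart of the argument is therefore a careful bookkeeping of the $27$ singular values — something Sage can do mechanically — combined with an elementary parity/divisibility deduction to rule out all candidate tuples. No new ideas beyond the Fonarev criterion and the Borel-Weil-Bott cohomology vanishing are needed; the Freudenthal variety case simply completes the enumeration, and together with Propositions~\ref{E_6/P_1} and~\ref{E_7/P_1} establishes the main theorem stated in the introduction.
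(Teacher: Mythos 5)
Your setup (Fonarev's criterion, the $27$ contributing positive roots, the reductions $g=0$, $f=0$, $e=0$, $d=0$ from the smallest elements) matches the paper, but the step you single out as decisive --- a parity/integrality contradiction between two half-integral expressions --- is not available for $E_7/P_7$ and the argument would stall there. The node $\alpha_7$ is the minuscule one: every positive root of $E_7$ has coefficient $0$ or $1$ at $\alpha_7$, so when you solve $(\omega+\rho-t\omega_7,\beta)=0$ for $t$ you divide by $(\omega_7,\beta^\vee)=1$, and all $27$ members of $\Sing(\omega)$ are \emph{integral} linear forms in $a,\dots,g$. (The paper's displayed list confirms this: unlike the lists for $E_7/P_2$ through $E_7/P_6$, it contains no fractional coefficients.) Hence no divisibility obstruction exists. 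Also, the largest element is $2a+2b+3c+4d+3e+2f+g+17$, not $a+2b+3c+4d+3e+2f+g+17$ as you wrote; the latter is only the second largest.

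The paper instead finishes with a counting/collision argument, which is the ingredient your proposal is missing: since $\Sing(\omega)$ must equal $\{1,\dots,27\}$, the $27$ linear forms must be pairwise distinct. After $d=e=f=g=0$, matching the three largest values $2a+2b+3c+17$, $a+2b+3c+16$, $a+2b+2c+15$ with $27,26,25$ forces $a=c=0$; but then $b+c+2d+e+f+g+7$ and $a+b+c+d+e+f+g+7$ both collapse to $b+7$, so $|\Sing(\omega)|<27$, a contradiction. You would need this distinctness argument (or an equivalent one) to close the Freudenthal case; the integrality route genuinely fails here.
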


\begin{proof}
Suppose that an irreducible equivariant vector bundle $\mathcal E_{\omega}$ on $E_7/P_7$ is Ulrich. 
Then $g=0$ by Proposition \ref{coefficient of weight} and 
$f=0$ by Proposition \ref{subminimum} because the Lie algebra $\mathfrak e_7$ has a gradation of depth 1 associated to $\alpha_7$.
Since $\alpha_5 + \alpha_6 + \alpha_7$ is minimal among all roots in $\Phi_1 \setminus \{ \alpha_7 , \alpha_6 + \alpha_7\}$, 
the corresponding singular value $e + f + g + 3$ is equal to 3, which implies that $e=0$.
Likewise, since $\alpha_4 + \alpha_5 + \alpha_6 + \alpha_7$ is minimal among all roots in $\Phi_1 \setminus \{ \alpha_7 , \alpha_6 + \alpha_7, \alpha_5 + \alpha_6 + \alpha_7 \}$, 
the corresponding singular value $d+ e + f + g + 4$ is equal to 4, which implies that  $d=0$.

According to Proposition \ref{maximum of singular values}, 
a singular value attained at the highest root $\theta = 2\alpha_1 + 2\alpha_2 + 3\alpha_3 + 4\alpha_4 + 3\alpha_5 + 2\alpha_6 + \alpha_7$ 
is the maximum of $\Sing(\omega)$, from which we see that $2a + 2b + 3c + 4d + 3e + 2f + g + 17=27$. 
As $\theta - \alpha_7$ is maximal among all roots in $\Phi_1 \setminus \{ \theta \}$, we get that $a + 2b + 3c + 4d + 3e + 2f + g + 17=26$. 
Hence we deduce that $a=0$. 
On the other hand, considering the two positive roots 
$\alpha_1 + \alpha_2 + \alpha_3 + \alpha_4 + \alpha_5 + \alpha_6 + \alpha_7$ and 
$\alpha_2 + \alpha_3 + 2\alpha_4 + \alpha_5 + \alpha_6 + \alpha_7$ in $\Phi_1$, 
we check that 
$\{ 
a + b + c + d + e + f + g + 7,
b + c + 2d + e + f + g + 7
\} \subset \Sing(\omega)$.
However, if $a=0$, then these cannot be different. 
%
%
So there are no $a,b,c,d,e,f,g$ such that $\Sing(\omega)=\{ 1, 2, \cdots, 27 \}$. 
Therefore, 
$E_7/P_7$ does not admit an irreducible equivariant Ulrich bundle. 
\end{proof}
%

\section{$E_8$-homogeneous varieties}
We recall some basic facts about the simple Lie group $E_8$. 
When we choose an orthonormal basis $\{L_1, \cdots, L_8\}$ for the dual Cartan subalgebra $\mathfrak t^*$, 
the simple roots can be taken as follows: \\ 
$\alpha_1 = \frac{1}{2} (L_1-L_2-L_3-L_4-L_5-L_6-L_7+L_8)$,  
$\alpha_2 = L_1 + L_2$, $\alpha_3 = L_2 - L_1$, $\alpha_4 = L_3 - L_2$, 
$\alpha_5 = L_4 - L_3$, $\alpha_6 = L_5 - L_4$, $\alpha_7 = L_6 - L_5$, $\alpha_8 = L_7 - L_6$.

\begin{center}
\begin{picture} (250, 55)
 \put(10,20){$(E_8)$}

 \put(50,20){\circle{5}} 
 \put(110,50){\circle{5}} 
 \put(80,20){\circle{5}} 
 \put(110,20){\circle{5}} 
 \put(140,20){\circle{5}} 
 \put(170,20){\circle{5}} 
 \put(200,20){\circle{5}} 
 \put(230,20){\circle{5}} 

 \put(52.5,20){\line(1,0){25}} 
 \put(82.5,20){\line(1,0){25}} 
 \put(112.5,20){\line(1,0){25}} 
 \put(142.5,20){\line(1,0){25}} 
 \put(110,22.5){\line(0,1){25}} 
 \put(172.5,20){\line(1,0){25}} 
 \put(202.5,20){\line(1,0){25}} 

 \put(45,10){$\alpha_1$} 
 \put(116,48){$\alpha_2$} 
 \put(75,10){$\alpha_3$} 
 \put(106,10){$\alpha_4$} 
 \put(138,10){$\alpha_5$} 
 \put(168,10){$\alpha_6$} 
 \put(198,10){$\alpha_7$} 
 \put(228,10){$\alpha_8$} 
 \end{picture}
\end{center}
The Lie group $E_8$ has 120 positive roots, see e.g. page 333 in \cite{FH}:
$$\Phi^+=\{ L_i+L_j\}_{1 \leq i < j \leq 8} \cup \{ L_i-L_j\}_{1 \leq j < i \leq 8} 
\cup \Big \{ \frac{1}{2} \Big (\sum_{i=1}^{7} (-1)^{n(i)}L_i + L_8 \Big ) : \sum_{i=1}^{7} n(i) \mbox{ is even} \Big \}.$$
The highest root of $E_8$ is $2\alpha_1 + 3\alpha_2 + 4\alpha_3 + 6\alpha_4 + 5\alpha_5 + 4\alpha_6 + 3\alpha_7 + 2\alpha_8$.


\subsection{Homogeneous variety $E_8/P_1$}
The rational homogeneous variety $E_8/P_1 \subset \mathbb P^{3874}$ has dimension 78 and Fano index 23. 

\begin{proposition}
There are no irreducible equivariant Ulrich bundles on 
$E_8/P_1$.
\end{proposition}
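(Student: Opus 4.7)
The plan is to invoke Fonarev's criterion (Lemma \ref{Ulrich criterion}): since $\dim(E_8/P_1) = 78$, an irreducible equivariant bundle $\mathcal{E}_{\omega}$ is Ulrich if and only if $\Sing(\omega) = \{1, 2, \ldots, 78\}$. I will show this set equality is unachievable for any $L$-dominant integral weight $\omega = a\omega_1 + b\omega_2 + c\omega_3 + d\omega_4 + e\omega_5 + f\omega_6 + g\omega_7 + h\omega_8$ with $b, c, d, e, f, g, h \geq 0$.

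First I would compute $\Sing(\omega)$ explicitly using Sage, exactly as in the preceding propositions. This produces a list of $78$ expressions in $a, b, c, d, e, f, g, h$ with rational constant terms, one per positive root $\beta$ of $E_8$ with positive $\alpha_1$-coefficient $n_1(\beta) \in \{1, 2\}$. Since the formula for a singular value at root $\beta = \sum n_i \alpha_i$ gives $t = (a+1) + \tfrac{1}{n_1}\bigl[(b+1)n_2 + (c+1)n_3 + \cdots + (h+1)n_8\bigr]$, expressions from $n_1 = 1$ roots are integer linear combinations of $a, \ldots, h$ plus a positive integer, while those from $n_1 = 2$ roots (including the highest root of $E_8$) involve half-integer coefficients on $b, c, \ldots, h$ and a rational constant.

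Next I would extract vanishing conditions from the smallest elements. Because the coefficient of $a$ in every expression is exactly $1$ and the remaining part is a nonnegative combination of $b, c, \ldots, h$, we have $\min \Sing(\omega) = a+1$, attained by $\beta = \alpha_1$; requiring $a + 1 = 1$ forces $a = 0$. The chain of roots $\alpha_1,\, \alpha_1 + \alpha_3,\, \alpha_1 + \alpha_3 + \alpha_4,\, \ldots,\, \alpha_1 + \alpha_3 + \alpha_4 + \alpha_5 + \alpha_6 + \alpha_7 + \alpha_8$ yields the expressions $a+1,\, a+c+2,\, a+c+d+3,\, \ldots,\, a+c+d+e+f+g+h+7$. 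Since these seven values must occupy the slots $1, 2, \ldots, 7$ in $\{1, \ldots, 78\}$, one successively deduces $c = d = e = f = g = h = 0$, leaving only $b$ free.

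The main obstacle will then be closing out the contradiction on $b$. Following the pattern of the $E_7/P_2$, $E_7/P_5$, and $E_7/P_6$ proofs, I expect that after the above vanishing, the half-integer expressions coming from the $n_1 = 2$ roots (for example those carrying coefficient $\tfrac{1}{2}b$ together with a fractional constant such as $\tfrac{17}{2}$ from the highest root and others carrying coefficient $\tfrac{1}{2}$ elsewhere) will impose mutually incompatible parity conditions on $b$; alternatively, two of the $78$ expressions will collapse to the same value in violation of $|\Sing(\omega)| = 78$. The practical difficulty is only the bookkeeping of the explicit list of $78$ expressions, but the structural pattern established in the earlier cases makes the integrality/collision step routine and yields the desired nonexistence.
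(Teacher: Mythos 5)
Your setup is the same as the paper's: apply Lemma \ref{Ulrich criterion} with $d=78$, compute the $78$ expressions of $\Sing(\omega)$, and read off vanishing conditions from the smallest slots. The deductions $a=0$, then $c=0$ (from $a+c+2=2$), then $d=0$ (from $a+c+d+3=3$) are exactly what the paper obtains. But your next step is wrong: the seven values $a+1,\,a+c+2,\,\ldots,\,a+c+d+e+f+g+h+7$ do \emph{not} have to occupy the slots $1,\ldots,7$. Already at the value $4$ there are two candidate expressions, $a+c+d+e+4$ and $a+b+c+d+4$, so with $a=c=d=0$ one only gets the dichotomy ``$e=0$ or $b=0$,'' not $e=0$; the same happens at the subsequent slots. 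This is precisely why the paper's other $E_8$ cases (e.g.\ $E_8/P_3$, $E_8/P_4$) have to split into cases at this point. So your claimed conclusion $c=d=e=f=g=h=0$ ``leaving only $b$ free'' does not follow.

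The second, more serious issue is that the decisive contradiction is never exhibited --- you only ``expect'' that some parity or collision argument will close the case. The paper's actual argument needs nothing beyond $c=0$: the two expressions
$a + \tfrac{3}{2}b + \tfrac{3}{2}c + \tfrac{5}{2}d + 2e + \tfrac{3}{2}f + g + \tfrac{1}{2}h + \tfrac{23}{2}$ and
$a + \tfrac{3}{2}b + 2c + \tfrac{5}{2}d + 2e + \tfrac{3}{2}f + g + \tfrac{1}{2}h + 12$
differ by $\tfrac{1}{2}(c+1)$, which equals $\tfrac{1}{2}$ once $c=0$, so they cannot both be integers --- contradicting $\Sing(\omega)\subset\mathbb Z$. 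This is indeed the ``incompatible integrality'' mechanism you anticipate, and it makes the erroneous deduction about $e,f,g,h$ unnecessary, but as written your proposal neither identifies the relevant pair of expressions nor verifies that such a pair exists, so the proof is incomplete at its key step.
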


\begin{proof} 
Suppose that an irreducible equivariant vector bundle $\mathcal E_{\omega}$ on $E_8/P_1$ is Ulrich. 
Then $a=0$ by Proposition~\ref{coefficient of weight}. 
Because the Lie algebra $\mathfrak e_8$ has a gradation of depth 2 associated to $\alpha_1$ and 
$2\alpha_1 + 2\alpha_2 + 3\alpha_3 + 4\alpha_4 + 3\alpha_5 + 2\alpha_6 + \alpha_7$ is minimal among the 14 roots in $\Phi_2$, 
we know that the corresponding singular value $t = 1 + \sum_{i \neq 1} \frac{c_i (a_i +1)}{2}>8$ for all $\alpha \in \Phi_2$. 
Hence $c$ is zero by Proposition \ref{subminimum}. 

Considering the two positive roots 
$2\alpha_1 + 3\alpha_2 + 3\alpha_3 + 5\alpha_4 + 4\alpha_5 + 3\alpha_6 + 2\alpha_7 + \alpha_8$ and 
$2\alpha_1 + 3\alpha_2 + 4\alpha_3 + 5\alpha_4 + 4\alpha_5 + 3\alpha_6 + 2\alpha_7 + \alpha_8$ in $\Phi_2$, 
we check that 
$\{ 
a + \frac{3}{2}b + \frac{3}{2}c + \frac{5}{2}d + 2e + \frac{3}{2}f + g + \frac{1}{2}h + \frac{23}{2}, 
a + \frac{3}{2}b + 2c + \frac{5}{2}d + 2e + \frac{3}{2}f + g + \frac{1}{2}h + 12
\} \subset \Sing(\omega)$.
However, if $c=0$, then
these cannot be integers simultaneously. 
Therefore, 
there are no $a,b,c,d,e,f,g,h$ such that $\Sing(\omega)=\{ 1, 2, \cdots,78 \}$; 
so there are no irreducible equivariant Ulrich bundles on $E_8/P_1$ by Lemma \ref{Ulrich criterion}. 
\end{proof}

\subsection{Homogeneous variety $E_8/P_2$}
The rational homogeneous variety $E_8/P_2 \subset \mathbb P^{147249}$ has dimension 92 and Fano index 17. 

\begin{proposition}
There are no irreducible equivariant Ulrich bundles on 
$E_8/P_2$.
\end{proposition}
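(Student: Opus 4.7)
The plan is to apply Fonarev's criterion from Lemma \ref{Ulrich criterion} in exactly the same style as the preceding propositions of this section. First I would write $\omega+\rho-t\omega_2$ in the orthonormal basis $\{L_1,\ldots,L_8\}$, using the explicit formulas for $\rho$, $\omega_2$, and the expansion $\omega = a\omega_1+b\omega_2+c\omega_3+d\omega_4+e\omega_5+f\omega_6+g\omega_7+h\omega_8$ with all coefficients other than $b$ required to be non-negative integers. For each of the $120$ positive roots $\alpha\in\Phi^+(E_8)$, I would then solve the linear equation $(\omega+\rho-t\omega_2,\alpha)=0$ for $t$; the collection of such values $t$ constitutes $\Sing(\omega)$.

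By Fonarev's criterion, an irreducible equivariant Ulrich bundle would require $\Sing(\omega)=\{1,2,\ldots,92\}$. Guided by the analogous computations for $E_6/P_2$ and $E_7/P_2$, I expect the smallest element of $\Sing(\omega)$ to be of the form $b+1$, which forces $b=0$. A few more of the lowest-lying entries, arising from roots $L_i\pm L_j$ and telescoping chains such as $b+d+\cdots$, should quickly pin down additional vanishings, most plausibly $d=0$. Once these are in hand, the positive roots of the form $\frac{1}{2}\bigl(\sum_{i=1}^{7}(-1)^{n(i)} L_i + L_8\bigr)$ contribute entries to $\Sing(\omega)$ carrying half-integer coefficients; the objective is to locate a pair of such entries whose difference is non-integral once the already-determined coefficients are set to zero. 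Such a pair rules out $\Sing(\omega)=\{1,\ldots,92\}$, exactly as in the $E_7/P_2$ argument where integrality of $\tfrac12 a + b + c + 2d + \tfrac32 e + f + \tfrac12 g + \tfrac{15}{2}$ and $\tfrac12 a + b + c + \tfrac32 d + \tfrac32 e + f + \tfrac12 g + 7$ became incompatible after $d$ was forced to vanish.

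The main obstacle is the combinatorial size of the problem: with $120$ positive roots producing up to $92$ distinct singular values, hand enumeration is impractical. I would use Sage (as the authors do throughout the paper) to generate $\Sing(\omega)$ and then identify the decisive pair of half-integer expressions. The delicate point is to verify that for \emph{every} residual choice of the remaining free coefficients (after the initial forced vanishings), at least one pair of entries fails simultaneous integrality; this is best organized as a short case analysis on which of $a,c,e,f,g,h$ must next vanish in order to fit the remaining small entries of $\{1,\ldots,92\}$. Once the right pair of incompatible half-integer conditions is isolated, the contradiction closes the proof in a few lines, in the same spirit as the $E_7$ cases treated earlier.
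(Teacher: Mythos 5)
Your plan is essentially the paper's proof: compute $\Sing(\omega)$ via the Borel--Weil--Bott/Fonarev criterion, force $b=d=0$ from the smallest entries, and then derive a contradiction from two fractional entries that cannot be integers simultaneously. The only discrepancies are cosmetic: the decisive pair in the paper is not a pair of half-integer expressions from the spin-type roots but the pair $\frac{2}{3}a + b + \frac{4}{3}c + \frac{5}{3}d + \frac{4}{3}e + f + \frac{2}{3}g + \frac{1}{3}h + 8$ and $\frac{2}{3}a + b + \frac{4}{3}c + 2d + \frac{4}{3}e + f + \frac{2}{3}g + \frac{1}{3}h + \frac{25}{3}$, whose difference is $\frac{1}{3}d+\frac{1}{3}=\frac{1}{3}$ once $d=0$, so no further case analysis on the remaining coefficients is needed.
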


\begin{proof}
Suppose that an irreducible equivariant vector bundle $\mathcal E_{\omega}$ on $E_8/P_2$ is Ulrich. 
Then $b=0$ by Proposition~\ref{coefficient of weight}. 
Because the Lie algebra $\mathfrak e_8$ has a gradation of depth 3 associated to $\alpha_2$ and 
$\alpha_1 + 2\alpha_2 + 2\alpha_3 + 3\alpha_4 + 2\alpha_5 + \alpha_6$, 
$\alpha_1 + 3\alpha_2 + 3\alpha_3 + 5\alpha_4 + 4\alpha_5 + 3\alpha_6 + 2\alpha_7 + \alpha_8$ are minimal among the 28 roots in $\Phi_2$ and the 8 roots in $\Phi_3$, respectively,  
we know that the corresponding singular values 
$t = 1 + \sum_{i \neq 2} \frac{c_i (a_i +1)}{2}>5$ for all $\alpha \in \Phi_2$ and 
$t = 1 + \sum_{i \neq 2} \frac{c_i (a_i +1)}{3}>7$ for all $\alpha \in \Phi_3$. 
Hence $d$ is zero by Proposition \ref{subminimum}. 

Considering the two positive roots 
$2\alpha_1 + 3\alpha_2 + 4\alpha_3 + 5\alpha_4 + 4\alpha_5 + 3\alpha_6 + 2\alpha_7 + \alpha_8$  and 
$2\alpha_1 + 3\alpha_2 + 4\alpha_3 + 6\alpha_4 + 4\alpha_5 + 3\alpha_6 + 2\alpha_7 + \alpha_8$ in $\Phi_3$, 
we check that 
$\{ 
\frac{2}{3}a + b + \frac{4}{3}c + \frac{5}{3}d + \frac{4}{3}e + f + \frac{2}{3}g + \frac{1}{3}h + 8, 
\frac{2}{3}a + b + \frac{4}{3}c + 2d + \frac{4}{3}e + f + \frac{2}{3}g + \frac{1}{3}h + \frac{25}{3}
\} \subset \Sing(\omega)$.
However, if $d=0$, then these cannot be integers simultaneously. 
Therefore, 
there are no $a,b,c,d,e,f,g,h$ such that $\Sing(\omega)=\{ 1, 2, \cdots, 92 \}.$
\end{proof}

\subsection{Homogeneous variety $E_8/P_3$}
The rational homogeneous variety $E_8/P_3 \subset \mathbb P^{6695999}$ has dimension 98 and Fano index 13. 

\begin{proposition}
There are no irreducible equivariant Ulrich bundles on 
$E_8/P_3$.
\end{proposition}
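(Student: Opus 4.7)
The plan is to imitate the strategy used for the other $E_n/P_k$ propositions in the paper: parametrize an $L$-dominant integral weight, compute the set $\Sing(\omega)$ via the Borel--Weil--Bott criterion (Lemma~\ref{Ulrich criterion}), and then show by a purely combinatorial/integrality argument that $\Sing(\omega)$ cannot equal $\{1,2,\dots,98\}$.

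First I would write an arbitrary $L$-dominant weight as $\omega = a\omega_1 + b\omega_2 + c\omega_3 + d\omega_4 + e\omega_5 + f\omega_6 + g\omega_7 + h\omega_8$ with $a,b,d,e,f,g,h \geq 0$ (the coefficient $c$ of $\omega_3$ is unconstrained since $P_3$ is the relevant parabolic). Expanding $\omega+\rho-t\omega_3$ in the basis $\{L_1,\dots,L_8\}$ and setting this expression orthogonal (with respect to the Cartan--Killing form) to each of the $120$ positive roots of $E_8$ produces a candidate singular value $t$ for each root; discarding those candidates incompatible with $\omega$ being $L$-dominant yields a list for $\Sing(\omega)$. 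This step is mechanical and is carried out with Sage exactly as in all the previous cases.

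Next I would extract forced vanishings from the smallest elements of $\Sing(\omega)$. Since $\dim(E_8/P_3)=98$, assuming the bundle is Ulrich one must have $1,2,3,\dots \in \Sing(\omega)$. The elements of $\Sing(\omega)$ coming from the simple roots adjacent to $\alpha_3$ (and from the short chain $c+1$, $c+d+2$, etc.\ that appeared in the $E_6/P_3$ and $E_7/P_3$ cases) will force $c=0$, and then force at least one of the remaining neighboring coefficients, such as $a$ or $d$, to vanish as well.

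Finally I would exploit the integrality constraints coming from the $E_8$ roots of the form $\tfrac{1}{2}(\sum_{i=1}^{7}(-1)^{n(i)}L_i + L_8)$. These produce singular values of the form $\tfrac{1}{2}a + \tfrac{1}{2}b + c + \tfrac{3}{2}d + \cdots$ and $\tfrac{1}{3}$-type linear combinations (via the roots adjacent to the trivalent node) whose numerators must be even/divisible by~$3$ simultaneously. As in the $E_7/P_3$ proof, once $c=0$ and one of $\{a,d\}$ also vanishes, two such half-integer (or third-integer) expressions drop to values that cannot both be integers for any choice of the remaining nonnegative $b,d,e,f,g,h$. This contradiction finishes the argument and shows $\Sing(\omega) \ne \{1,\dots,98\}$, so Lemma~\ref{Ulrich criterion} rules out Ulrich bundles.

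The main obstacle is purely bookkeeping: $E_8$ has $120$ positive roots, so $\Sing(\omega)$ is a long list of affine-linear forms in eight variables, and one must locate the right pair (or small tuple) of entries whose integrality conditions are mutually incompatible once the forced vanishings are imposed. There is no conceptual novelty beyond the earlier propositions --- the difficulty is organizing the case analysis so that a single pair of expressions with fractional coefficients delivers the contradiction, rather than having to branch through many subcases. I expect the same two families of half-integer forms (those with $\tfrac{1}{2}a+\tfrac{1}{2}b$ prefixes) that killed $E_7/P_3$ to do the job here as well, after the trivalent-node integrality (the $\tfrac{1}{3}$ forms) is used to pin down $b$ modulo~$3$.
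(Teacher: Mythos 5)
Your proposal follows essentially the same route as the paper: compute $\Sing(\omega)$ mechanically, force $c=0$ (and then one of $a$ or $d$) from the requirement $\Sing(\omega)=\{1,\dots,98\}$, and derive a contradiction from pairs of entries with fractional coefficients that cannot both be integers. The only difference is in execution detail: the paper's contradiction comes from a pair of $\tfrac{1}{3}$-coefficient forms when $a=0$ and a pair of $\tfrac{1}{4}$-coefficient forms when $d=0$ (each pair differing by $\tfrac{a+1}{3}$ resp.\ $\tfrac{d+1}{4}$), rather than the half-integer forms you anticipated, but this does not change the argument.
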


\begin{proof}
Suppose that an irreducible equivariant vector bundle $\mathcal E_{\omega}$ on $E_8/P_3$ is Ulrich. 
Then $c=0$ by Proposition~\ref{coefficient of weight}. 
Because the Lie algebra $\mathfrak e_8$ has a gradation of depth 4 associated to $\alpha_3$ and 
$\alpha_1 + \alpha_2 + 2\alpha_3 + 2\alpha_4 + \alpha_5$, 
$\alpha_1 + 2\alpha_2 + 3\alpha_3 + 4\alpha_4 + 3\alpha_5 + 2\alpha_6 + \alpha_7$, 
$2\alpha_1 + 2\alpha_2 + 4\alpha_3 + 5\alpha_4 + 4\alpha_5 + 3\alpha_6 + 2\alpha_7 + \alpha_8$ are minimal among the 35 roots in $\Phi_2$, the 14 roots in $\Phi_3$ and the 7 roots in $\Phi_4$, respectively,  
we know that the corresponding singular values 
$t = 1 + \sum_{i \neq 3} \frac{c_i (a_i +1)}{2}>3$ for all $\alpha \in \Phi_2$, 
$t = 1 + \sum_{i \neq 3} \frac{c_i (a_i +1)}{3}>5$ for all $\alpha \in \Phi_3$ and 
$t = 1 + \sum_{i \neq 3} \frac{c_i (a_i +1)}{4}>5$ for all $\alpha \in \Phi_4$. 
Hence either $a=0$ or $d=0$ by Proposition \ref{subminimum}.

Considering the two positive roots 
$\alpha_1 + 2\alpha_2 + 3\alpha_3 + 5\alpha_4 + 4\alpha_5 + 3\alpha_6 + 2\alpha_7 + \alpha_8$ and 
$2\alpha_1 + 2\alpha_2 + 3\alpha_3 + 5\alpha_4 + 4\alpha_5 + 3\alpha_6 + 2\alpha_7 + \alpha_8$ in $\Phi_3$, 
we check that 
$\{ 
\frac{1}{3}a + b + c + \frac{5}{3}d + \frac{4}{3}e + f + \frac{2}{3}g + \frac{1}{3}h + \frac{22}{3}, 
\frac{2}{3}a + b + c + \frac{5}{3}d + \frac{4}{3}e + f + \frac{2}{3}g + \frac{1}{3}h + \frac{23}{3}
\} \subset \Sing(\omega)$.
However, if $a=0$, then these cannot be integers simultaneously. 
Likewise, considering the two positive roots 
$2\alpha_1 + 3\alpha_2 + 4\alpha_3 + 5\alpha_4 + 4\alpha_5 + 3\alpha_6 + 2\alpha_7 + \alpha_8$ and 
$2\alpha_1 + 3\alpha_2 + 4\alpha_3 + 6\alpha_4 + 4\alpha_5 + 3\alpha_6 + 2\alpha_7 + \alpha_8$ in $\Phi_4$, 
we check that 
$\{ 
\frac{1}{2}a + \frac{3}{4}b + c + \frac{5}{4}d + e + \frac{3}{4}f + \frac{1}{2}g + \frac{1}{4}h + 6, 
\frac{1}{2}a + \frac{3}{4}b + c + \frac{3}{2}d + e + \frac{3}{4}f + \frac{1}{2}g + \frac{1}{4}h + \frac{25}{4}
\} \subset \Sing(\omega)$.
However, if $d=0$, then these cannot be integers simultaneously. 
Therefore, 
there are no $a,b,c,d,e,f,g,h$ such that $\Sing(\omega)=\{ 1, 2, \cdots, 98 \}.$
\end{proof}

\subsection{Homogeneous variety $E_8/P_4$}
The rational homogeneous variety $E_8/P_4 \subset \mathbb P^{6899079263}$ has dimension 106 and Fano index 9. 

\begin{proposition}
There are no irreducible equivariant Ulrich bundles on 
$E_8/P_4$.
\end{proposition}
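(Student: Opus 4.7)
The plan is to follow exactly the scheme used in the preceding six propositions for $E_7/P_i$ and in Proposition~5.4 for $E_6/P_4$, since the node $\alpha_4$ plays the analogous ``branch-adjacent'' role in $E_8$. First I would compute $\omega+\rho-t\omega_4$ explicitly in the $\{L_1,\dots,L_8\}$ basis, using the formulas for the fundamental weights listed at the start of Section~7 and the expression of a generic $L$-dominant weight $\omega=a\omega_1+b\omega_2+c\omega_3+d\omega_4+e\omega_5+f\omega_6+g\omega_7+h\omega_8$ with $a,b,c,e,f,g,h\ge 0$. Then for each of the $120$ positive roots I would solve the orthogonality equation $(\omega+\rho-t\omega_4,\beta)=0$ for $t$, producing the set $\Sing(\omega)$; by Lemma~\ref{Ulrich criterion} an Ulrich bundle exists if and only if $\Sing(\omega)=\{1,2,\dots,106\}$.

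Next, I would exploit the small end of $\Sing(\omega)$ to force integer collapses. As in the $E_7/P_4$ proof, the roots $\alpha_4$, $\alpha_3+\alpha_4$, $\alpha_2+\alpha_4$, $\alpha_4+\alpha_5$ contribute the smallest elements $d+1$, $c+d+2$, $b+d+2$, $d+e+2$, etc., so the requirement that these be the smallest integers in $\{1,\dots,106\}$ will force $d=0$ immediately and then restrict which of $b,c,e$ can vanish. After fixing $d=0$, many expressions in $\Sing(\omega)$ acquire fractional form coming from the reflections through $\alpha_1$ (giving denominators $2$), through $\alpha_2+\alpha_4$ paths (denominators $3$), and through the long branch $\alpha_1\alpha_3\alpha_4$ direction (denominators $4$); these are exactly the ingredients that obstruct integrality.

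The final step is a case analysis mirroring the $E_7/P_4$ argument. The case split $b=0$, $c=0$, or $e=0$ will each be eliminated by exhibiting a pair of elements of $\Sing(\omega)$ that cannot be integers simultaneously: in one case a ratio of terms with $\tfrac13$ coefficients, in another a pair of terms with $\tfrac14$ and $\tfrac12$ coefficients, and in the third a pair of $\tfrac12$-coefficient terms differing only by whether $e$ or $d$ appears with coefficient $1$ vs $\tfrac12$. Since the total set must be precisely $\{1,\dots,106\}$, any non-integer among the singular values yields an immediate contradiction with Lemma~\ref{Ulrich criterion}.

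The main obstacle I anticipate is bookkeeping rather than conceptual: enumerating all $120$ positive roots and organizing the resulting list of singular $t$-values is tedious and error-prone, and it is essential not to miss any fractional expression that could rescue a case. As in the preceding propositions, I would offload the enumeration to Sage \cite{SAGE} and then carry out the integrality/collision analysis by hand. Once the list is correct, the argument itself is a short finite case-check and concludes that no choice of non-negative integers $a,b,c,e,f,g,h$ together with $d\in\mathbb Z_{\ge 0}$ yields $\Sing(\omega)=\{1,\dots,106\}$.
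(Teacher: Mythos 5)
Your proposal follows exactly the paper's argument: compute $\Sing(\omega)$ via Sage, use the smallest elements to force $d=0$ together with one of $b,c,e$ vanishing, and then kill each case by exhibiting two members of $\Sing(\omega)$ that cannot both be integers. The only discrepancy is in your guessed denominators: the actual obstructing pairs in the paper have denominators $5$ (for the cases $b=0$ and $c=0$) and $6$ (for $e=0$) rather than $3$ and $4$, but since you defer the explicit enumeration to the computer this does not affect the validity of the plan.
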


\begin{proof}
Suppose that an irreducible equivariant vector bundle $\mathcal E_{\omega}$ on $E_8/P_4$ is Ulrich. 
Then $d=0$ by Proposition~\ref{coefficient of weight}. 
Because the Lie algebra $\mathfrak e_8$ has a gradation of depth 6 associated to $\alpha_4$ and 
$\alpha_2 + \alpha_3 + 2\alpha_4 + \alpha_5$, 
$\alpha_1 + \alpha_2 + 2\alpha_3 + 3\alpha_4 + 2\alpha_5 + \alpha_6$, 
$\alpha_1 + 2\alpha_2 + 2\alpha_3 + 4\alpha_4 + 3\alpha_5 + 2\alpha_6 + \alpha_7$, 
$\alpha_1 + 2\alpha_2 + 3\alpha_3 + 5\alpha_4 + 4\alpha_5 + 3\alpha_6 + 2\alpha_7 + \alpha_8$, 
$2\alpha_1 + 3\alpha_2 + 4\alpha_3 + 6\alpha_4 + 4\alpha_5 + 3\alpha_6 + 2\alpha_7 + \alpha_8$ 
are minimal among the 30 roots in $\Phi_2$, the 20 roots in $\Phi_3$, the 15 roots in $\Phi_4$, the 6 roots in $\Phi_5$ and the 5 roots in $\Phi_6$, respectively,  
we know that the corresponding singular values 
$t = 1 + \sum_{i \neq 4} \frac{c_i (a_i +1)}{2}>2$ for all $\alpha \in \Phi_2$, 
$t = 1 + \sum_{i \neq 4} \frac{c_i (a_i +1)}{3}>3$ for all $\alpha \in \Phi_3$,
$t = 1 + \sum_{i \neq 4} \frac{c_i (a_i +1)}{4}>3$ for all $\alpha \in \Phi_4$,
$t = 1 + \sum_{i \neq 4} \frac{c_i (a_i +1)}{5}>4$ for all $\alpha \in \Phi_5$ and 
$t = 1 + \sum_{i \neq 4} \frac{c_i (a_i +1)}{6}>4$ for all $\alpha \in \Phi_6$. 
Hence at least one of $b, c, e$ is zero by Proposition \ref{subminimum}.

Considering the two positive roots 
$\alpha_1 + 2\alpha_2 + 3\alpha_3 + 5\alpha_4 + 4\alpha_5 + 3\alpha_6 + 2\alpha_7 + \alpha_8$  and 
$\alpha_1 + 3\alpha_2 + 3\alpha_3 + 5\alpha_4 + 4\alpha_5 + 3\alpha_6 + 2\alpha_7 + \alpha_8$  in $\Phi_5$, 
we check that 
$\{ 
\frac{1}{5}a + \frac{2}{5}b + \frac{3}{5}c + d + \frac{4}{5}e + \frac{3}{5}f + \frac{2}{5}g + \frac{1}{5}h + \frac{21}{5}, 
\frac{1}{5}a + \frac{3}{5}b + \frac{3}{5}c + d + \frac{4}{5}e + \frac{3}{5}f + \frac{2}{5}g + \frac{1}{5}h + \frac{22}{5}
\} \subset \Sing(\omega)$.
However, if $b=0$, then these cannot be integers simultaneously. 
Likewise, considering the two positive roots 
$2\alpha_1 + 3\alpha_2 + 3\alpha_3 + 5\alpha_4 + 4\alpha_5 + 3\alpha_6 + 2\alpha_7 + \alpha_8$  and 
$2\alpha_1 + 3\alpha_2 + 4\alpha_3 + 5\alpha_4 + 4\alpha_5 + 3\alpha_6 + 2\alpha_7 + \alpha_8$  in $\Phi_5$, 
we check that 
$\{ 
\frac{2}{5}a + \frac{3}{5}b + \frac{3}{5}c + d + \frac{4}{5}e + \frac{3}{5}f + \frac{2}{5}g + \frac{1}{5}h + \frac{23}{5}, 
\frac{2}{5}a + \frac{3}{5}b + \frac{4}{5}c + d + \frac{4}{5}e + \frac{3}{5}f + \frac{2}{5}g + \frac{1}{5}h + \frac{24}{5}
\} \subset \Sing(\omega)$.
However, if $c=0$, then these cannot be integers simultaneously. 
Considering the two positive roots 
$2\alpha_1 + 3\alpha_2 + 4\alpha_3 + 6\alpha_4 + 4\alpha_5 + 3\alpha_6 + 2\alpha_7 + \alpha_8$  and 
$2\alpha_1 + 3\alpha_2 + 4\alpha_3 + 6\alpha_4 + 5\alpha_5 + 3\alpha_6 + 2\alpha_7 + \alpha_8$  in $\Phi_6$, 
we check that 
$\{ 
\frac{1}{3}a + \frac{1}{2}b + \frac{2}{3}c + d + \frac{2}{3}e + \frac{1}{2}f + \frac{1}{3}g + \frac{1}{6}h + \frac{25}{6}, 
\frac{1}{3}a + \frac{1}{2}b + \frac{2}{3}c + d + \frac{5}{6}e + \frac{1}{2}f + \frac{1}{3}g + \frac{1}{6}h + \frac{13}{3}
\} \subset \Sing(\omega)$.
However, if $e=0$, then these cannot be integers simultaneously. 
Therefore, 
there are no $a,b,c,d,e,f,g,h$ such that $\Sing(\omega)=\{ 1, 2, \cdots, 106 \}.$
\end{proof}

\subsection{Homogeneous variety $E_8/P_5$}
The rational homogeneous variety $E_8/P_5 \subset \mathbb P^{146325269}$ has dimension 104 and Fano index 11. 

\begin{proposition}
There are no irreducible equivariant Ulrich bundles on 
$E_8/P_5$.
\end{proposition}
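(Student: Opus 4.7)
The plan is to apply Fonarev's criterion (Lemma \ref{Ulrich criterion}): the variety $E_8/P_5$ has dimension $104$, so I need to show that no $L$-dominant integral weight $\omega$ satisfies $\Sing(\omega) = \{1, 2, \ldots, 104\}$. Writing $\omega = a\omega_1 + b\omega_2 + c\omega_3 + d\omega_4 + e\omega_5 + f\omega_6 + g\omega_7 + h\omega_8$, the condition of $L$-dominance reads $a, b, c, d, f, g, h \geq 0$ (only the coefficient $e$ of $\omega_5 = -\omega_k$ is allowed to be negative, since $\alpha_5$ is the marked simple root).

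First I would expand $\omega + \rho - t \omega_5$ in the orthonormal basis $\{L_1, \ldots, L_8\}$ using the explicit formulas for the fundamental weights and $\rho$ collected at the beginning of the section. Then, for each of the $120$ positive roots of $E_8$, I would solve the orthogonality condition for $t$; this produces the full list constituting $\Sing(\omega)$. As in all the preceding propositions, the computation is routine but voluminous, and in practice I would lean on Sage \cite{SAGE} to enumerate the expressions.

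Next, imitating the strategy used for $E_8/P_1, \ldots, E_8/P_4$, I would extract the smallest and largest elements of $\Sing(\omega)$. By non-negativity of the other variables, the minimal value should be $e+1$, so the requirement $e+1 = 1$ forces $e = 0$. From the next few smallest expressions (which should be combinations like $e+f+2$, $d+e+f+3$, etc.), I expect to deduce that either $d$ or $f$ must vanish, splitting into a couple of subcases.

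The hard part will then be the integrality analysis, which is the decisive step in every $E_8$ case treated so far. Because the node $\alpha_5$ is interior to the Dynkin diagram and far from the branch node, $\Sing(\omega)$ contains many expressions with denominators $2$, $3$, and plausibly higher from the spin-type positive roots of $E_8$. In each subcase left after fixing $e = 0$ (and $d = 0$ or $f = 0$), I anticipate finding two simultaneous elements of $\Sing(\omega)$ whose fractional parts cannot be integers at the same time — exactly the same obstruction as in the proofs for $E_8/P_3$ and $E_8/P_4$. This will yield the required contradiction and complete the proof. The main obstacle is purely bookkeeping: correctly enumerating the $104$ expressions in $\Sing(\omega)$ and checking the integrality obstructions systematically; no new conceptual idea beyond Fonarev's criterion and careful case analysis is needed.
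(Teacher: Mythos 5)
Your plan coincides, step for step, with the paper's own proof: Fonarev's criterion with $\dim E_8/P_5 = 104$, the deduction $e=0$ from the element $e+1$ being the unique smallest entry of $\Sing(\omega)$, the split into the subcases $d=0$ or $f=0$ coming from the elements $d+e+2$ and $e+f+2$, and an integrality obstruction closing out each subcase. The one substantive thing your write-up leaves undone is exhibiting the obstructing pairs, and this is precisely the part that cannot be settled by analogy with the other cases --- for $E_6/P_1$ and $E_7/P_1$ no such obstruction exists, which is why those two varieties do carry irreducible equivariant Ulrich bundles --- so until the pairs are produced the argument is a (correct) outline rather than a proof. For the record, the witnesses the paper finds are: in the subcase $d=0$, the elements $\frac{1}{2}a+\frac{3}{4}b+c+\frac{5}{4}d+e+\frac{3}{4}f+\frac{1}{2}g+\frac{1}{4}h+6$ and $\frac{1}{2}a+\frac{3}{4}b+c+\frac{3}{2}d+e+\frac{3}{4}f+\frac{1}{2}g+\frac{1}{4}h+\frac{25}{4}$ of $\Sing(\omega)$, whose difference $\frac{1}{4}d+\frac{1}{4}=\frac{1}{4}$ is not an integer; and in the subcase $f=0$, the elements $\frac{2}{5}a+\frac{3}{5}b+\frac{4}{5}c+\frac{6}{5}d+e+\frac{3}{5}f+\frac{2}{5}g+\frac{1}{5}h+\frac{26}{5}$ and $\frac{2}{5}a+\frac{3}{5}b+\frac{4}{5}c+\frac{6}{5}d+e+\frac{4}{5}f+\frac{2}{5}g+\frac{1}{5}h+\frac{27}{5}$, whose difference $\frac{1}{5}f+\frac{1}{5}=\frac{1}{5}$ is not an integer. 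With the (Sage-assisted) enumeration of $\Sing(\omega)$ carried out and these two pairs identified, your argument becomes exactly the proof in the paper.
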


\begin{proof}
Suppose that an irreducible equivariant vector bundle $\mathcal E_{\omega}$ on $E_8/P_5$ is Ulrich. 
Then $e=0$ by Proposition~\ref{coefficient of weight}. 
Because the Lie algebra $\mathfrak e_8$ has a gradation of depth 5 associated to $\alpha_5$ and 
$\alpha_2 + \alpha_3 + 2\alpha_4 + 2\alpha_5 + \alpha_6$, 
$\alpha_1 + \alpha_2 + 2\alpha_3 + 3\alpha_4 + 3\alpha_5 + 2\alpha_6 + \alpha_7$, 
$\alpha_1 + 2\alpha_2 + 2\alpha_3 + 4\alpha_4 + 4\alpha_5 + 3\alpha_6 + 2\alpha_7 + \alpha_8$, 
$2\alpha_1 + 3\alpha_2 + 4\alpha_3 + 6\alpha_4 + 5\alpha_5 + 3\alpha_6 + 2\alpha_7 + \alpha_8$ 
are minimal among the 30 roots in $\Phi_2$, the 20 roots in $\Phi_3$, the 10 roots in $\Phi_4$ and the 4 roots in $\Phi_5$, respectively,  
we know that the corresponding singular values 
$t = 1 + \sum_{i \neq 5} \frac{c_i (a_i +1)}{2}>3$ for all $\alpha \in \Phi_2$, 
$t = 1 + \sum_{i \neq 5} \frac{c_i (a_i +1)}{3}>4$ for all $\alpha \in \Phi_3$,
$t = 1 + \sum_{i \neq 5} \frac{c_i (a_i +1)}{4}>4$ for all $\alpha \in \Phi_4$ and 
$t = 1 + \sum_{i \neq 5} \frac{c_i (a_i +1)}{5}>5$ for all $\alpha \in \Phi_5$. 
Hence either $d=0$ or $f=0$ by Proposition \ref{subminimum}.

Considering the two positive roots 
$2\alpha_1 + 3\alpha_2 + 4\alpha_3 + 5\alpha_4 + 4\alpha_5 + 3\alpha_6 + 2\alpha_7 + \alpha_8$ and 
$2\alpha_1 + 3\alpha_2 + 4\alpha_3 + 6\alpha_4 + 4\alpha_5 + 3\alpha_6 + 2\alpha_7 + \alpha_8$ in $\Phi_4$, 
we check that 
$\{ 
\frac{1}{2}a + \frac{3}{4}b + c + \frac{5}{4}d + e + \frac{3}{4}f + \frac{1}{2}g + \frac{1}{4}h + 6,
\frac{1}{2}a + \frac{3}{4}b + c + \frac{3}{2}d + e + \frac{3}{4}f + \frac{1}{2}g + \frac{1}{4}h + \frac{25}{4} 
\} \subset \Sing(\omega)$.
However, if $d=0$, then these cannot be integers simultaneously. 
Likewise, considering the two positive roots 
$2\alpha_1 + 3\alpha_2 + 4\alpha_3 + 6\alpha_4 + 5\alpha_5 + 3\alpha_6 + 2\alpha_7 + \alpha_8$  and 
$2\alpha_1 + 3\alpha_2 + 4\alpha_3 + 6\alpha_4 + 5\alpha_5 + 4\alpha_6 + 2\alpha_7 + \alpha_8$  in $\Phi_5$, 
we check that 
$\{ 
\frac{2}{5}a + \frac{3}{5}b + \frac{4}{5}c + \frac{6}{5}d + e + \frac{3}{5}f + \frac{2}{5}g + \frac{1}{5}h + \frac{26}{5},
\frac{2}{5}a + \frac{3}{5}b + \frac{4}{5}c + \frac{6}{5}d + e + \frac{4}{5}f + \frac{2}{5}g + \frac{1}{5}h + \frac{27}{5}
\} \subset \Sing(\omega)$.
However, if $f=0$, then these cannot be integers simultaneously. 
Therefore, 
there are no $a,b,c,d,e,f,g,h$ such that $\Sing(\omega)=\{ 1, 2, \cdots, 104 \}.$
\end{proof}

\subsection{Homogeneous variety $E_8/P_6$}
The rational homogeneous variety $E_8/P_6 \subset \mathbb P^{2450239}$ has dimension 97 and Fano index 14. 

\begin{proposition}
There are no irreducible equivariant Ulrich bundles on 
$E_8/P_6$.
\end{proposition}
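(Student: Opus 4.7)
The plan is to apply Fonarev's criterion (Lemma \ref{Ulrich criterion}) and derive a contradiction from the requirement $\mathrm{Sing}(\omega)=\{1,2,\ldots,97\}$, exactly parallel to the earlier cases in this section. First I would write a general $L$-dominant weight as $\omega=a\omega_1+b\omega_2+c\omega_3+d\omega_4+e\omega_5+f\omega_6+g\omega_7+h\omega_8$ with $a,b,c,d,e,g,h\geq 0$ (the coefficient $f$ is free since $P_6$ is the maximal parabolic associated to $\alpha_6$), expand $\omega+\rho-t\omega_6$ in the orthonormal basis $\{L_1,\ldots,L_8\}$, and for each of the 97 positive roots not orthogonal to $\omega_6$ compute the unique $t$ making the pairing vanish. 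This produces $\mathrm{Sing}(\omega)$ as an explicit list of $97$ affine expressions in $a,b,c,d,e,f,g,h$, of three types: integer-coefficient expressions from the roots $L_i\pm L_j$, and fractional ones with denominators $2$ (and, after substitution, $3$ or $5$) coming from the half-spin roots $\tfrac{1}{2}(\sum(-1)^{n(i)}L_i+L_8)$.

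Next I would extract the smallest elements of $\mathrm{Sing}(\omega)$, coming from the simple roots and their short chains adjacent to $\alpha_6$ in the Dynkin diagram; the equations $1,2,3,\ldots$ on these expressions force several of the non-negative coefficients to vanish. By analogy with $E_8/P_5$ and $E_8/P_7$-type behaviour, I expect $f$ to play the distinguished role (the node at $\alpha_6$) and for the smallest integer-type constraints to yield relations like $f=0$ together with vanishings of one of its neighbours ($e$ or $g$), plus constraints on $d$.

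The main obstacle, and the punchline step, is the integrality analysis of the half-integer singular values. Once the forced vanishings are in place, I would look for a pair of fractional expressions in $\mathrm{Sing}(\omega)$ whose remaining coefficients have the form $\tfrac{1}{2}(\text{something})+\text{half-integer constant}$, and whose constant terms differ in a way incompatible with simultaneous integrality; for instance, one expression will force a parameter to be even while another forces it to be odd. This is the same mechanism that closed the cases $E_8/P_1,\ldots,E_8/P_5$, but because $\alpha_6$ sits three nodes away from both the branch point $\alpha_2$ and the endpoint $\alpha_8$, identifying the correct pair of offending expressions among the large list is the nontrivial bookkeeping step; I would let Sage \cite{SAGE} generate $\mathrm{Sing}(\omega)$ explicitly and then scan for incompatible pairs of fractional singular values.

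Assembling these three steps, no integer vector $(a,b,c,d,e,f,g,h)$ with $a,b,c,d,e,g,h\geq 0$ can satisfy $\mathrm{Sing}(\omega)=\{1,\ldots,97\}$, so by Lemma \ref{Ulrich criterion} the rational homogeneous variety $E_8/P_6$ admits no irreducible equivariant Ulrich bundle.
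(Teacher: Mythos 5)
Your proposal follows essentially the same route as the paper: compute $\Sing(\omega)$ via Sage as $97$ affine expressions in $a,\ldots,h$, use the smallest values to force $f=0$ and then $e=0$ or $g=0$, and conclude by exhibiting in each case a pair of fractional expressions in $\Sing(\omega)$ that cannot be integers simultaneously. The paper's proof is exactly this, with the offending pairs made explicit (e.g.\ for $e=0$ the two expressions with constant terms $\tfrac{25}{3}$ and $\tfrac{26}{3}$ differing by $\tfrac{1}{3}e+\tfrac{1}{3}$), so your plan is correct and only leaves the final bookkeeping to the computation you describe.
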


\begin{proof}
Suppose that an irreducible equivariant vector bundle $\mathcal E_{\omega}$ on $E_8/P_6$ is Ulrich. 
Then $f=0$ by Proposition~\ref{coefficient of weight}. 
Because the Lie algebra $\mathfrak e_8$ has a gradation of depth 4 associated to $\alpha_6$ and 
$\alpha_2 + \alpha_3 + 2\alpha_4 + 2\alpha_5 + 2\alpha_6 + \alpha_7$, 
$\alpha_1 + \alpha_2 + 2\alpha_3 + 3\alpha_4 + 3\alpha_5 + 3\alpha_6 + 2\alpha_7 + \alpha_8$, 
$2\alpha_1 + 3\alpha_2 + 4\alpha_3 + 6\alpha_4 + 5\alpha_5 + 4\alpha_6 + 2\alpha_7 + \alpha_8$ 
are minimal among the 30 roots in $\Phi_2$, the 16 roots in $\Phi_3$ and the 3 roots in $\Phi_4$, respectively,  
we know that the corresponding singular values 
$t = 1 + \sum_{i \neq 6} \frac{c_i (a_i +1)}{2}>4$ for all $\alpha \in \Phi_2$, 
$t = 1 + \sum_{i \neq 6} \frac{c_i (a_i +1)}{3}>5$ for all $\alpha \in \Phi_3$ and 
$t = 1 + \sum_{i \neq 6} \frac{c_i (a_i +1)}{4}>6$ for all $\alpha \in \Phi_4$. 
Hence either $e=0$ or $g=0$ by Proposition \ref{subminimum}.

Considering the two positive roots 
$2\alpha_1 + 3\alpha_2 + 4\alpha_3 + 6\alpha_4 + 4\alpha_5 + 3\alpha_6 + 2\alpha_7 + \alpha_8$ and 
$2\alpha_1 + 3\alpha_2 + 4\alpha_3 + 6\alpha_4 + 5\alpha_5 + 3\alpha_6 + 2\alpha_7 + \alpha_8$ in $\Phi_3$, 
we check that 
$\{ 
\frac{2}{3}a + b + \frac{4}{3}c + 2d + \frac{4}{3}e + f + \frac{2}{3}g + \frac{1}{3}h + \frac{25}{3}, 
\frac{2}{3}a + b + \frac{4}{3}c + 2d + \frac{5}{3}e + f + \frac{2}{3}g + \frac{1}{3}h + \frac{26}{3}
\} \subset \Sing(\omega)$.
However, if $e=0$, then these cannot be integers simultaneously. 
Likewise, considering the two positive roots 
$\alpha_1 + \alpha_2 + 2\alpha_3 + 3\alpha_4 + 2\alpha_5 + 2\alpha_6 + \alpha_7 + \alpha_8$  and 
$\alpha_1 + \alpha_2 + 2\alpha_3 + 3\alpha_4 + 2\alpha_5 + 2\alpha_6 + 2\alpha_7 + \alpha_8$  in $\Phi_2$, 
we check that 
$\{ 
\frac{1}{2}a + \frac{1}{2}b + c + \frac{3}{2}d + e + f + \frac{1}{2}g + \frac{1}{2}h + \frac{13}{2}, 
\frac{1}{2}a + \frac{1}{2}b + c + \frac{3}{2}d + e + f + g + \frac{1}{2}h + 7
\} \subset \Sing(\omega)$.
However, if $g=0$, then these cannot be integers simultaneously. 
Therefore, 
there are no $a,b,c,d,e,f,g,h$ such that $\Sing(\omega)=\{ 1, 2, \cdots, 97 \}.$
\end{proof}

\subsection{Homogeneous variety $E_8/P_7$}
The rational homogeneous variety $E_8/P_7 \subset \mathbb P^{30379}$ has dimension 83 and Fano index 19. 

\begin{proposition}
There are no irreducible equivariant Ulrich bundles on 
$E_8/P_7$.
\end{proposition}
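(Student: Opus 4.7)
The strategy, following the pattern established throughout the paper, is to invoke Fonarev's criterion (Lemma \ref{Ulrich criterion}) and show that for an $L$-dominant weight
$$\omega = a\omega_1 + b\omega_2 + c\omega_3 + d\omega_4 + e\omega_5 + f\omega_6 + g\omega_7 + h\omega_8$$
(with $a,b,c,d,e,f,h \geq 0$, while $g$ is free since it is the coefficient associated to the marked root $\alpha_7$), the set $\Sing(\omega)$ cannot coincide with $\{1,2,\ldots,83\}$. The first step is to compute $\Sing(\omega)$ by solving, for each of the $83$ positive roots $\beta$ of $E_8$ that pair nontrivially with the coweight dual to $\alpha_7$, the equation $(\omega+\rho-t\omega_7,\beta)=0$ for $t$. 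This yields $83$ affine-linear expressions in $(a,b,\ldots,h)$; as in the preceding sections, the calculation is carried out with Sage.

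Next, I would identify the smallest and largest attainable values among these $83$ expressions. Following the $E_7/P_7$ argument, I expect a chain of the form $g+1,\ f+g+2,\ e+f+g+3,\ \ldots$ arising from positive roots supported on the right tail $\alpha_7,\alpha_6,\alpha_5,\ldots$ of the Dynkin diagram. Forcing these to hit $1,2,3,\ldots$ successively will compel a block of coefficients (presumably at least $d=e=f=g=0$, and quite possibly $h=0$ as well, since $\omega_8$ also contributes via the adjacent node) to vanish. Symmetrically, requiring the largest expression -- arising from the highest root of $E_8$ -- to equal $83$ gives a top-end linear equation in the remaining unknowns $a,b,c$.

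With most coordinates pinned, the remaining task is to derive a contradiction. The main obstacle, and the place I expect the real work to be, is integrality: many expressions in $\Sing(\omega)$ carry denominators $2$, $3$, or higher, stemming from the spinor-type positive roots $\tfrac{1}{2}(\sum (-1)^{n(i)}L_i + L_8)$; each one must be an integer in $\{1,\ldots,83\}$, producing simultaneous congruence conditions modulo $2,3,\ldots$ on the surviving unknowns. By analogy with the $E_8/P_1,\ldots,E_8/P_6$ cases already treated, I expect to isolate two such half- or third-integer expressions that cannot both be integers after imposing the zero conditions above; if instead every integrality constraint can be met, I would look for two distinct positive roots producing the same value of $t$, contradicting $|\Sing(\omega)|=83$. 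Either outcome rules out an irreducible equivariant Ulrich bundle on $E_8/P_7$.
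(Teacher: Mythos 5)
Your plan is the paper's plan: compute the $83$ elements of $\Sing(\omega)$ with Sage via Fonarev's criterion, pin down coefficients from the small end of the set, and finish with an integrality contradiction coming from the half- and third-integer expressions. However, the one concrete structural claim you make is wrong, and it matters. For $E_8/P_7$ the marked node $\alpha_7$ is \emph{interior}, with two neighbours $\alpha_6$ and $\alpha_8$, so the bottom of $\Sing(\omega)$ is not a single chain $g+1,\ f+g+2,\ e+f+g+3,\dots$ but a branched one: it contains $g+1$ and \emph{both} $f+g+2$ and $g+h+2$ (and then $f+g+h+3$, $e+f+g+3$, etc.). Requiring $\Sing(\omega)=\{1,\dots,83\}$ therefore forces only $g=0$, and then, since $f+2$ and $h+2$ must be distinct and one of them must equal $2$, exactly one of $f,h$ vanishes. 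The block $d=e=f=g=0$ you predict from the $E_7/P_7$ analogy does not follow (it holds there precisely because $\alpha_7$ is an end node of the $E_7$ diagram), and in particular you cannot assume $e$ or $d$ is zero when you come to the integrality step.

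The second issue is that the decisive step is only announced, not carried out: ``I expect to isolate two such expressions'' is not a proof. The paper completes the argument by exhibiting explicit pairs adapted to the case division above. If $f=0$, the two singular values $a+\frac{3}{2}b+2c+3d+\frac{5}{2}e+\frac{3}{2}f+g+\frac{1}{2}h+13$ and $a+\frac{3}{2}b+2c+3d+\frac{5}{2}e+2f+g+\frac{1}{2}h+\frac{27}{2}$ differ by $\frac{1}{2}(f+1)=\frac{1}{2}$, so they cannot both be integers; if $h=0$, the values $\frac{2}{3}a+b+\frac{4}{3}c+2d+\frac{5}{3}e+\frac{4}{3}f+g+\frac{1}{3}h+\frac{28}{3}$ and $\frac{2}{3}a+b+\frac{4}{3}c+2d+\frac{5}{3}e+\frac{4}{3}f+g+\frac{2}{3}h+\frac{29}{3}$ differ by $\frac{1}{3}(h+1)=\frac{1}{3}$. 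To turn your sketch into a proof you must produce such pairs (or an explicit coincidence of two values) compatible with the correct case split $g=0$, $\{f=0,h\neq 0\}$ or $\{f\neq 0,h=0\}$.
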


\begin{proof}
Suppose that an irreducible equivariant vector bundle $\mathcal E_{\omega}$ on $E_8/P_7$ is Ulrich. 
Then $g=0$ by Proposition~\ref{coefficient of weight}. 
Because the Lie algebra $\mathfrak e_8$ has a gradation of depth 3 associated to $\alpha_7$ and 
$\alpha_2 + \alpha_3 + 2\alpha_4 + 2\alpha_5 + 2\alpha_6 + 2\alpha_7 + \alpha_8$, 
$2\alpha_1 + 3\alpha_2 + 4\alpha_3 + 6\alpha_4 + 5\alpha_5 + 4\alpha_6 + 3\alpha_7 + \alpha_8$ 
are minimal among the 27 roots in $\Phi_2$ and the two roots in $\Phi_3$, respectively,  
we know that the corresponding singular values 
$t = 1 + \sum_{i \neq 7} \frac{c_i (a_i +1)}{2}>5$ for all $\alpha \in \Phi_2$ and 
$t = 1 + \sum_{i \neq 7} \frac{c_i (a_i +1)}{3}>9$ for all $\alpha \in \Phi_3$. 
Hence either $f=0$ or $h=0$ by Proposition \ref{subminimum}.

Considering the two positive roots 
$2\alpha_1 + 3\alpha_2 + 4\alpha_3 + 6\alpha_4 + 5\alpha_5 + 3\alpha_6 + 2\alpha_7 + \alpha_8$ and 
$2\alpha_1 + 3\alpha_2 + 4\alpha_3 + 6\alpha_4 + 5\alpha_5 + 4\alpha_6 + 2\alpha_7 + \alpha_8$ in $\Phi_2$, 
we check that 
$\{ 
a + \frac{3}{2}b + 2c + 3d + \frac{5}{2}e + \frac{3}{2}f + g + \frac{1}{2}h + 13, 
a + \frac{3}{2}b + 2c + 3d + \frac{5}{2}e + 2f + g + \frac{1}{2}h + \frac{27}{2}
\} \subset \Sing(\omega)$.
However, if $f=0$, then these cannot be integers simultaneously. 
Likewise, considering the two positive roots 
$2\alpha_1 + 3\alpha_2 + 4\alpha_3 + 6\alpha_4 + 5\alpha_5 + 4\alpha_6 + 3\alpha_7 + \alpha_8$  and 
$2\alpha_1 + 3\alpha_2 + 4\alpha_3 + 6\alpha_4 + 5\alpha_5 + 4\alpha_6 + 3\alpha_7 + 2\alpha_8$  in $\Phi_3$, 
we check that 
$\{ 
\frac{2}{3}a + b + \frac{4}{3}c + 2d + \frac{5}{3}e + \frac{4}{3}f + g + \frac{1}{3}h + \frac{28}{3}, 
\frac{2}{3}a + b + \frac{4}{3}c + 2d + \frac{5}{3}e + \frac{4}{3}f + g + \frac{2}{3}h + \frac{29}{3}
\} \subset \Sing(\omega)$.
However, if $h=0$, then these cannot be integers simultaneously. 
Therefore, 
there are no $a,b,c,d,e,f,g,h$ such that $\Sing(\omega)=\{ 1, 2, \cdots, 83 \}.$
\end{proof}

\subsection{Homogeneous variety $E_8/P_8$}
The rational homogeneous variety $E_8/P_8 \subset \mathbb P^{247}$ has dimension 57 and Fano index 29. 
Since the fundamental $E_8$-module $V_{E_8}(\omega_8)$ is the adjoint representation $\mathfrak e_8$, $E_8/P_8$ is the adjoint variety of $E_8$.

\begin{proposition}
There are no irreducible equivariant Ulrich bundles on 
$E_8/P_8$.
\end{proposition}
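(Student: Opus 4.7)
The plan is to follow the same template used throughout Sections 3--6: apply Fonarev's criterion (Lemma \ref{Ulrich criterion}) after computing the singular set $\Sing(\omega)$ for an arbitrary $L$-dominant weight $\omega = a\omega_1 + b\omega_2 + c\omega_3 + d\omega_4 + e\omega_5 + f\omega_6 + g\omega_7 + h\omega_8$ on $E_8/P_8$, with $a,b,c,d,e,f,g \geq 0$ (only $h$ is allowed to be negative), and then show that $\Sing(\omega) = \{1,2,\ldots,57\}$ has no solution.

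First I will compute $\Sing(\omega)$ explicitly. Using the expressions for $\omega_i$ and $\rho$ in the basis $\{L_1,\ldots,L_8\}$, the weight $\omega+\rho-t\omega_8$ is singular exactly when it is orthogonal to some positive root of $E_8$. Since roots supported on $\{\alpha_1,\ldots,\alpha_7\}$ do not involve the parameter $t$ and their contribution is killed by $L$-dominance (they do not produce a condition on $t$ alone), only the $57$ positive roots containing $\alpha_8$ with nonzero coefficient contribute, matching $\dim E_8/P_8 = 57$. For each such root I will solve the linear equation in $t$ using Sage \cite{SAGE}, producing an explicit list of $57$ affine-linear expressions in $a,b,c,d,e,f,g,h$, as done in the preceding subsections.

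Next I will impose $\Sing(\omega) = \{1,2,\ldots,57\}$ and extract constraints. As in the other $E_8$-cases, the smallest elements of the list (those of the form $h+\mathrm{const}$, $g+h+\mathrm{const}$, \ldots) will force $h = g = f = e = d = 0$, because each such expression must hit a distinct value $1, 2, 3, \ldots$ in order of magnitude. Then the largest elements (which will have the form $2a + kb + \cdots + 29$ for appropriate multiplicities determined by the highest root) must equal $57, 56, \ldots$, pinning down $a$, $b$, $c$ as well. I expect the argument to end either by an integrality obstruction among terms with fractional coefficients (common denominators $2, 3, 4, 5, 6$ arise from the short roots with $\alpha_8$-coefficient in $\{2,3,4,5,6\}$), or by a collision forcing two distinct elements of $\Sing(\omega)$ to coincide, as in the $E_8/P_7$ proof.

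The main obstacle is simply the size and bookkeeping of the $57$-element singular set; there is no conceptual novelty beyond the preceding propositions. The argument is robust because $E_8/P_8$ is the minuscule-like end node, so several of the linear forms in $\Sing(\omega)$ will involve all eight variables at once, making integrality parity arguments (mod $2$, $3$, and $5$) immediately applicable as soon as the ``chain'' $h = \cdots = d = 0$ is established. Applying Lemma \ref{Ulrich criterion} then yields the non-existence of an irreducible equivariant Ulrich bundle on $E_8/P_8$.
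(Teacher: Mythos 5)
Your plan follows exactly the paper's template: compute the $57$ linear forms in $\Sing(\omega)$ with Sage, impose $\Sing(\omega)=\{1,\dots,57\}$, use the chain of smallest entries $h+1$, $g+h+2$, $f+g+h+3$, $e+f+g+h+4$, $d+e+f+g+h+5$ to force $d=e=f=g=h=0$, and then derive a contradiction. Up to that point you match the paper step for step. The gap is that you never actually exhibit the contradiction: you only predict that it will come ``either by an integrality obstruction \ldots{} or by a collision,'' which for a nonexistence statement of this computational type is precisely the part that has to be checked. The paper closes the argument with a collision: once $d=e=f=g=h=0$, the two distinct entries $a+2b+2c+3d+3e+2f+g+h+15$ and $a+2b+2c+3d+2e+2f+2g+h+15$ of $\Sing(\omega)$ both collapse to $a+2b+2c+15$, so the set cannot have $57$ distinct elements; no pinning down of $a,b,c$ via the largest entries is needed. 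Two of your supporting heuristics are also off: $E_8$ is simply laced, so there are no short roots, and for $P_8$ the $\alpha_8$-coefficient of a positive root is at most $2$, so the only denominators occurring in $\Sing(\omega)$ are $1$ and $2$ (not $2,\dots,6$); moreover the collision-type ending you cite occurs in the $E_7/P_7$ and $E_6/P_2$ cases, whereas $E_8/P_7$ is settled by integrality. None of this invalidates the method, but as written the proposal stops one step short of a proof.
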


\begin{proof}
Suppose that an irreducible equivariant vector bundle $\mathcal E_{\omega}$ on $E_8/P_8$ is Ulrich. 
Then $h=0$ by Proposition~\ref{coefficient of weight}.
Because the decomposition of $\mathfrak e_8$ associated to $\alpha_8$ is a contact gradation and the singular value attained at $\theta \in \Phi_2$ is greater than 2,
$g=0$ by Proposition \ref{subminimum}.  
Since $\alpha_6 + \alpha_7 + \alpha_8$ is minimal among all roots in $\Phi_1 \setminus \{ \alpha_8 , \alpha_7 + \alpha_8\}$, 
the corresponding singular value $f + g + h + 3$ is equal to 3, which implies that $f=0$.
Likewise, since $\alpha_5 + \alpha_6 + \alpha_7 + \alpha_8$ is minimal among all roots in $\Phi_1 \setminus \{ \alpha_8 , \alpha_7 + \alpha_8, \alpha_6 + \alpha_7 + \alpha_8 \}$, 
the corresponding singular value $e + f + g + h + 4$ is equal to 4, which implies that $e=0$.

Considering the two positive roots 
$\alpha_1 + \alpha_2 + 2\alpha_3 + 3\alpha_4 + 3\alpha_5 + 2\alpha_6 + \alpha_7 + \alpha_8$  and 
$\alpha_1 + \alpha_2 + 2\alpha_3 + 3\alpha_4 + 2\alpha_5 + 2\alpha_6 + 2\alpha_7 + \alpha_8$ in $\Phi_1$, 
we check that 
$\{ 
a + b + 2c + 3d + 2e + 2f + 2g + h + 14,
a + b + 2c + 3d + 3e + 2f + g + h + 14
\} \subset \Sing(\omega)$.
However, if $e=f=g=h=0$, then these cannot be different. 
Therefore, 
there are no $a,b,c,d,e,f,g,h$ such that $\Sing(\omega)=\{ 1, 2, \cdots, 57 \}.$
\end{proof}

\vskip 1em

\noindent
\textbf{Acknowledgements}.
The authors would like to thank Rosa Maria Mir\'{o}-Roig and Laurent Manivel for useful discussions and comments. 
Also, they are grateful to the referee for several helpful suggestions to improve the first draft. 

\vskip 2em

\noindent
\textbf{Funding}.
This work was supported by the Institute for Basic Science (IBS-R003-Y1). 

\vskip 3em


\end{document}